\numberwithin{equation}{section}
\newtheorem{theorem}{Theorem}[section]
\newtheorem{lemma}[theorem]{Lemma}
\newtheorem{proposition}[theorem]{Proposition}
\newtheorem{claim}[theorem]{Claim}
\newtheorem{corollary}[theorem]{Corollary}
\theoremstyle{definition}
\newtheorem{nremark}[theorem]{Remark}
\newtheorem*{remark}{Remark}
\newtheorem{assumption}[theorem]{Assumption}
\renewcommand{\le}{\leqslant}
\renewcommand{\ge}{\geqslant}
\newcommand\noproof{\hfill$\Box$}
\newcommand\ZZ{{\mathbb Z}}
\newcommand\RR{{\mathbb R}}
\newcommand\Bi{{\mathrm{Bin}}}
\newcommand\eps{\varepsilon}
\newcommand\la{\lambda}
\newcommand\las{\lambda_*}
\renewcommand\Pr{{\mathbb P}}
\newcommand\Q{{\mathbb Q}}
\newcommand\E{{\mathbb E}}
\newcommand\Var{{\mathrm{Var}}}
\newcommand\Covar{{\mathrm{Cov}}}
\newcommand\dto{\overset{\mathrm{d}}{\to}}
\newcommand\cc{{\mathrm{c}}}
\newcommand\cF{\mathcal{F}}
\newcommand\cH{\mathcal{H}}
\newcommand\cG{\mathcal{G}}
\newcommand\cA{\mathcal{A}}
\newcommand\cB{\mathcal{B}}
\newcommand\cC{\mathcal{C}}
\newcommand\cD{\mathcal{D}}
\newcommand\cL{\mathcal{L}}
\newcommand\cQ{\mathcal{Q}}
\newcommand\cI{\mathcal{I}}
\newcommand\cP{\mathcal{P}}
\newcommand\ubig{\mathcal{U}_\mathrm{big}}
\newcommand\ucx{\mathcal{U}_\mathrm{cx}}
\newcommand\cE{\mathcal{E}}
\newcommand\GI{G_{\mathrm{inc}}}
\newcommand\tL{\widetilde{L}}
\newcommand\trho{\widetilde{\rho}}
\newcommand\bb[1]{\bigl(#1\bigr)}
\newcommand\Bb[1]{\Bigl(#1\Bigr)}
\newcommand\bm[1]{\bigl|#1\bigr|}
\newcommand\ceil[1]{\lceil#1\rceil}
\newcommand\floor[1]{\lfloor#1\rfloor}
\newcommand\core{C}
\newcommand\excore{C^+}
\newcommand\eco{\cC_1^+}
\newcommand\range{R}
\newcommand\dy{\mathrm{d}y}
\newcommand\dz{\mathrm{d}z}
\newcommand\da{\mathrm{d}a}
\newcommand\db{\mathrm{d}b}
\newcommand\Hrnp{H^r_{n,p}}
\newcommand\Hrnm{H^r_{n,m}}
\newcommand\Htnp{H^2_{n,p}}
\newcommand\Hrnptwo{H^r_{n,p_2}}
\newcommand\Hrnpi{H^r_{n,p_i}}
\newcommand\Hrmp{H^r_{m,p}}
\newcommand\Hrnsp{H^r_{n-s,p}}
\newcommand\Hrnsop{H^r_{n-\mu_n,p}}
\newcommand\Nc{N_{\mathrm{con}}}
\newcommand\cN{\mathcal{N}}
\newcommand\cM{\mathcal{M}}
\newcommand\partit[2]{\{#1:#2\}}
\newcommand\ind[1]{\mathbbm{1}_{#1}}
\begin{document}
\title{Counting connected hypergraphs via the probabilistic method}
\author{B\'ela Bollob\'as%
\thanks{Department of Pure Mathematics and Mathematical Statistics,
Wilberforce Road, Cambridge CB3 0WB, UK and
Department of Mathematical Sciences, University of Memphis, Memphis TN 38152, USA.
E-mail: {\tt b.bollobas@dpmms.cam.ac.uk}.}
\thanks{Research supported in part by NSF grant DMS-1301614 and
EU MULTIPLEX grant 317532.}
\and Oliver Riordan%
\thanks{Mathematical Institute, University of Oxford, Radcliffe Observatory Quarter, Woodstock Road, Oxford OX2 6GG, UK.
E-mail: {\tt riordan@maths.ox.ac.uk}.}}
\date{April 23, 2014; revised November 17, 2015}
\maketitle

\begin{abstract}
In 1990 Bender, Canfield and McKay gave an asymptotic formula for the number of
connected graphs on $[n]=\{1,2,\ldots,n\}$ with $m$ edges, whenever $n$ and the nullity
$m-n+1$ tend to infinity.
Let $C_r(n,t)$ be the number of connected $r$-uniform hypergraphs on $[n]$
with nullity $t=(r-1)m-n+1$, where $m$ is the number of edges. For $r\ge 3$,
asymptotic formulae for $C_r(n,t)$ are known only for
partial ranges of the parameters: in 1997 Karo\'nski and \L uczak gave one for
$t=o(\log n/\log\log n)$, and recently Behrisch, Coja-Oghlan and Kang gave
one for $t=\Theta(n)$.
Here we prove such a formula for any fixed $r\ge 3$ and any $t=t(n)$ satisfying
$t=o(n)$ and $t\to\infty$ as $n\to\infty$, complementing the last result.
This leaves open only the case $t/n\to\infty$, which we expect to be much simpler,
and will consider in future work. The proof is based on probabilistic methods, 
and in particular on a bivariate local limit theorem for the number of vertices and edges
in the largest component of a certain random hypergraph. We deduce this from
the corresponding central limit theorem by smoothing techniques.
\end{abstract}


\section{Introduction}

Our aim in this paper is to prove a result about $r$-uniform hypergraphs that can
be viewed in two complementary ways, either as a probabilistic result or as an enumerative one.
In this section we shall state the enumerative form; in the next section we
switch to the probabilistic viewpoint, which we shall adopt for most of the paper,
and in particular in the proofs.

If $H$ is an $r$-uniform hypergraph then
\[
 |H|\le c(H)+(r-1)e(H),
\]
where $|H|$ is the number of vertices of $H$, $e(H)$ is the number of edges,
and $c(H)$ is the number of components, with equality if and only if 
$H$ is a forest, i.e., every component of $H$ is a tree.
Define the \emph{nullity} $n(H)$ of $H$ as
\begin{equation}\label{nulldef}
 n(H) = c(H)+(r-1)e(H)-|H|,
\end{equation}
so $n(H)\ge 0$, and $H$ is a tree iff $c(H)=1$ and $n(H)=0$. Note for later that, if $H$ is connected, then $|H|+n(H)-1$ must be a multiple of $r-1$.
If we replace each
hyperedge of $H$ by a tree on the same set of $r$ vertices,
then $n(H)$ is simply the nullity of the resulting (multi-)graph.
Connected graphs or hypergraphs are naturally parameterised by the number of vertices
and the nullity, although often the \emph{excess} $n(H)-1$ is considered instead.

One of the most basic questions about any class of combinatorial (or other) structures is:
how many such structures are there with given `size' parameters? Or, sometimes
more naturally, how many `irreducible' structures? For (labelled) graphs and hypergraphs,
the first question is trivial, but the second, taking `irreducible' to mean
connected, certainly is not, and it is no surprise that it has been extensively
studied. Given integers $r\ge 2$, $s\ge 1$ and $t\ge 0$, let $C_r(s,t)$
be the number of connected
$r$-uniform hypergraphs on $[s]=\{1,2,\ldots,s\}$ having nullity $t$.
(Thus $C_r(s,t)=0$ if $r-1$ does not divide $s+t-1$.)
Starting with Cayley's formula $C_2(s,0)=s^{s-2}$, the asymptotic evaluation
of $C_2(s,t)$ was studied by Wright~\cite{Wright1,Wright2,Wright3,Wright4}
and others for increasingly broad ranges of $t=t(s)$, culminating in the results of
Bender, Canfield and McKay~\cite{BCMcK} giving an asymptotic formula for $C_2(s,t)$
whenever $s\to\infty$, for any function $t=t(s)$.

For $r\ge 3$, much less is known. Selivanov~\cite{Selivanov} gave an exact
formula for the number $C_r(s,0)$ of trees; 
the remaining results we shall mention are all asymptotic,
with $r$ fixed, $s\to\infty$, and $t$ some function of $s$.
Karo\'nski and \L uczak~\cite{KL_sparse} gave an asymptotic
formula for $C_r(s,t)$ when
$t=o(\log s/\log\log s)$, so the hypergraphs counted are quite close to trees.
In an extended abstract from 2006, Andriamampianina and Ravelomanana~\cite{AR} outlined
an extension of this to the case $t=o(s^{1/3})$. Recently, Behrisch, Coja-Oghlan and Kang~\cite{BC-OK2b}
gave an asymptotic formula for $C_r(s,t)$ when $t=\Theta(s)$; their proof is based on probabilistic
methods, which seem to work best when $t$ is relatively large, rather than the enumerative
methods most successful for small $t$.
Independently and essentially simultaneously with the present work,
Sato and Wormald~\cite{SatoWormald} (see also Sato~\cite{Sato}) have given
an asymptotic formula for $C_r(s,t)$ when $r=3$, $t=o(s)$ and $t/(s^{1/3}\log^2 s)\to\infty$.

Our main result complements those in~\cite{BC-OK2b}, and greatly extends those in~\cite{KL_sparse,SatoWormald},
covering
the entire range $t\to\infty$, $t=o(s)$. The formula we obtain is rather complicated; to state it we
need some definitions.

Given an integer $r\ge 2$ and a real number $0<\rho<1$, define
\begin{equation}\label{Prdef}
 \Psi_r(\rho) = -\frac{r-1}{r}\frac{\log(1-\rho)}{\rho}\frac{1-(1-\rho)^r}{1-(1-\rho)^{r-1}} -1.
\end{equation}
For any $r\ge 2$ it is easy to see that $\Psi_r(\rho)$ is strictly increasing on $(0,1)$, 
since each of the factors $-\log(1-\rho)/\rho$ and $(1-(1-\rho)^r)/(1-(1-\rho)^{r-1})$ is.
Since $\Psi_r$ is continuous, considering the limits at $0$ and $1$ we see
that $\Psi_r$ gives a bijection from $(0,1)$ to $(0,\infty)$. 

\begin{theorem}\label{thenum}
Let $r\ge 2$ be fixed, and let $t=t(s)$ satisfy $t\to\infty$ and $t=o(s)$ as $s\to\infty$.
Then when $s+t-1$ is divisible by $r-1$ the number $C_r(s,t)$ of connected
$r$-uniform hypergraphs on $[s]$ with nullity $t$
satisfies
\begin{equation}\label{enumform}
 C_r(s,t) \sim
  \frac{\sqrt{3}}{2\sqrt{\pi}} \frac{r-1}{\sqrt{s}}
 \left( \frac{ e \big(1-(1-\rho)^r\big) s^r }{m\ r!\ \rho^r} \right)^m
 \bigl( \rho(1-\rho)^{(1-\rho)/\rho}\bigr)^s
\end{equation}
as $s\to\infty$, where $\rho>0$ is the unique positive solution to
\begin{equation}\label{rdefc}
 \Psi_r(\rho) = \frac{t-1}{s},
\end{equation}
and $m=(s+t-1)/(r-1)$ is the number of edges of any such hypergraph.
Moreover, the probability $P_r(s,t)$ that a random $m$-edge $r$-uniform hypergraph on $[s]$
is connected satisfies
\begin{equation}\label{Pform}
 P_r(s,t)  \sim e^{r/2+\ind{r=2}} \sqrt{\frac{3(r-1)}{2}}  \left(\frac{1-(1-\rho)^r}{\rho^r}\right)^m
 \bigl( \rho(1-\rho)^{(1-\rho)/\rho}\bigr)^s,
\end{equation}
where $\ind{A}$ denotes the indicator function of $A$.
\end{theorem}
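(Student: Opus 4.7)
The plan is to deduce the formulas (\ref{enumform}) and (\ref{Pform}) from the local limit theorem for the pair $(L_1, M_1)$ in $\Hrnp$, which is the main technical content of the paper. Given the local limit theorem, the enumerative formula follows from a standard two-way evaluation of $\Pr(L_1 = s, M_1 = m)$ in a suitably chosen random hypergraph.

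Given $s, t$ with $t \to \infty$ and $t = o(s)$, set $m = (s+t-1)/(r-1)$ and choose $n = n(s,t)$ and $p = (1+\eps)(r-2)!n^{-r+1}$ in the supercritical regime so that $(\E L_1, \E M_1)$ matches $(s, m)$ to within a standard deviation. Concretely, $s/n$ should tend to the Galton--Watson survival probability $1-(1-\rho)^{r-1}$ and $m/\bigl(\binom{n}{r}p\bigr)$ to the giant edge density $1-(1-\rho)^r$, where $\rho = \rho(s,t)$ is the unique positive solution of $\Psi_r(\rho) = (t-1)/s$. The hypotheses $t = o(s)$ and $t \to \infty$ then translate into $\eps \to 0$ and $\eps^3 n \to \infty$, so the local limit theorem applies at the peak and yields $\Pr(L_1 = s, M_1 = m)$ as an explicit function of $n$ and $p$, via the covariance matrix of $(L_1, M_1)$ computed in the authors' earlier joint CLT.

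On the other hand, one can evaluate $\Pr(L_1 = s, M_1 = m)$ directly by summing over possible giant vertex sets $S$ and connected structures $H_0$ on $S$, yielding
\[
 \Pr(L_1 = s, M_1 = m) = \binom{n}{s}\, C_r(s,t)\, p^m(1-p)^{\binom{s}{r}-m+X}\, \Pr\bigl(L_1(\Hrnsp) < s\bigr),
\]
where $X = \binom{n}{r}-\binom{s}{r}-\binom{n-s}{r}$ counts potential cross edges and $\Hrnsp$ is an independent random hypergraph on $[n]\setminus S$. By duality, $\Hrnsp$ at these parameters is subcritical (its offspring mean is $(1-s/n)^{r-1}(1+\eps) < 1$), so its largest component has size $O(\eps^{-2}\log n) = o(s)$ with high probability, and the final factor tends to $1$. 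Equating this with the local limit theorem's value and solving for $C_r(s,t)$ expresses the count in terms of $n$, $p$, and the covariance data.

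The remaining step is simplification: apply Stirling to $\binom{n}{s}$, expand $\log(1-p)$ using $p = O(n^{-r+1})$, and substitute the relations between $n, p$ and $(s, t, \rho)$; the target formula (\ref{enumform}) is written in the precise combination $\bigl(\rho(1-\rho)^{(1-\rho)/\rho}\bigr)^s\bigl((1-(1-\rho)^r)/\rho^r\bigr)^m$ exactly so that the $n$-dependence cancels, leaving the prefactor coming from the covariance at the peak. For (\ref{Pform}), divide $C_r(s,t)$ by $\binom{\binom{s}{r}}{m}$ and apply Stirling once more; the factor $e^{r/2+\ind{r=2}}$ emerges from the asymptotic expansion of $\binom{\binom{s}{r}}{m}$, with the extra $1$ when $r=2$ reflecting that $m$ is then of order $s$ rather than negligible compared to $\binom{s}{2}$. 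The main obstacle is this final algebraic simplification, which requires careful line-by-line bookkeeping to verify the cancellation of the $n$-terms; a secondary point is to pick $n$ (equivalently $\eps$) so that $(\E L_1, \E M_1)$ lies within a lattice spacing of $(s,m)$, so that the Gaussian exponential factor at the peak of the local limit theorem is $1+o(1)$.
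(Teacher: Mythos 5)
Your proposal matches the paper's own deduction of Theorem~\ref{thenum} from the local limit theorem: choose $n$ and $p$ so that the target $(s,m)$ sits at the peak of the bivariate local limit theorem, identify the point probability with the expected number of $(s,m)$-components via subcritical duality (the paper packages the ``$\Pr(\cdot)\sim\E[N_{s,t}]$'' step as Lemma~\ref{lbigep}, which also handles ties and multiplicities that your exact-looking identity glosses over), and then simplify with Stirling. The only cosmetic difference is that you work with $(L_1,M_1)$ where the paper works with $(L_1,N_1)$, an interchangeable choice.
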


To understand this result it may help to note that $\Psi_r(x)=(r-1)x^2/12+O(x^3)$ as $x\to 0$, so
\[
 \rho \sim 2\sqrt{\frac{3}{r-1}\frac{t}{s}}
\]
when $t/s\to 0$. Also, it may be useful to note that rearranging \eqref{rdefc} gives $m/s=(\Psi_r(\rho)+1)/(r-1)$,
so we can rewrite the formulae \eqref{enumform} and \eqref{Pform} as functions of $s$ and $\rho$ only (or $m$ and $\rho$ only)
if we wish.

There are many ways to write a formula such as \eqref{enumform}, and checking whether two
such formulae agree may require some calculation. In the Appendix we present such calculations
showing that Theorem~\ref{thenum} matches the results of~\cite{BC-OK2pre,BC-OK2abs,BCMcK,KL_sparse,SatoWormald}
where the ranges of applicability overlap, as well as the corrected version of~\cite{BC-OK2b}.
In particular, for the graph case (which of course is not our main focus),
\eqref{enumform} is consistent with (indeed, implied by) the Bender--Canfield--McKay formula~\cite{BCMcK}.
For hypergraphs, Theorem~\ref{thenum} shows that the asymptotic formula of Karo\'nski and \L uczak~\cite{KL_sparse}
extends not only to $t=o(s^{1/3})$, as they suspected, but to any $t=o(s^{1/2})$ (and no further).

We shall return to the topic of estimating $C_r(s,t)$ when $t/s\to\infty$ in a future paper~\cite{dense}.
Although we have not yet checked all the details, this regime seems to be much easier
to analyze than that considered here or by Behrisch, Coja-Oghlan and Kang.
The key point is that, following the approach taken in the next section, the random hypergraph
that one needs to analyze has average degree tending to infinity, which means that its behaviour
is relatively simple. In particular, with high probability all small components are trees.

\section{Probabilistic reformulation}

In this section we shall state a probabilistic result that turns out to be equivalent to
Theorem~\ref{thenum}; as we shall see, the formulae in this setting are significantly
simpler. In the rest of the paper we shall use probabilistic methods
to prove this reformulation, deducing Theorem~\ref{thenum} in Section~\ref{sec_deduce}.

For $2\le r\le n$ and $0<p<1$,
let $\Hrnp$ be the random $r$-uniform hypergraph with vertex set $[n]=\{1,2,\ldots,n\}$ in which
each of the $\binom{n}{r}$ possible hyperedges is present independently
with probability $p$.
Throughout we consider $r\ge 2$ fixed, $n\to\infty$, and 
\[
 p=p(n)=\la (r-2)!n^{-r+1},
\]
where $\la=\la(n)=\Theta(1)$; often, we write $\la$ as $\la(n)=1+\eps(n)$.
It is well known (see Section~\ref{sec_past})
that the model $\Hrnp$ undergoes a phase transition at $\la=1$ analogous to that
established by Erd\H os and R\'enyi~\cite{ERgiant} in the
graph case, and indeed that the `window' of this phase transition is given by $\la=1+\eps$
with $\eps^3n=O(1)$; see~\cite{BR_hyp}.
For this reason, we call the model $\Hrnp$ \emph{subcritical} if $\la=1-\eps$
with $\eps=\eps(n)$ satisfying $\eps^3n\to\infty$, and \emph{supercritical} if $\la=1+\eps$
with $\eps^3n\to\infty$. Here we study the supercritical phase, so
throughout this paper we make the following assumption unless specified otherwise.

\begin{assumption}\label{A1}\rm (Weak Assumption.)
The quantities $p(n)$, $\la(n)$ and $\eps(n)>0$ are related by
$\la=1+\eps$ and $p=\la (r-2)!n^{-r+1}$. Moreover, $r\ge 2$ is fixed
and, as $n\to\infty$, we have $\eps^3n\to\infty$ and $\eps=O(1)$.
\end{assumption}

Much of the time we additionally suppose that $\eps\to 0$, i.e., assume the following.

\begin{assumption}\label{A0}\rm (Standard Assumption.)
The conditions of Assumption~\ref{A1} hold, and in addition $\eps\to 0$ as $n\to\infty$.
\end{assumption}

Given a hypergraph $H$, let $\cL_1(H)$ denote the component with the most vertices,
chosen according to any fixed rule if there is a tie. Let $L_1(H)=|\cL_1(H)|$,
$M_1(H)=e(\cL_1(H))$ and $N_1(H)=n(\cL_1(H))$ be the order, size and
nullity of this component. Our next result gives an asymptotic formula for the probability
that the triple $(L_1(\Hrnp),M_1(\Hrnp),N_1(\Hrnp))$ takes any specific
value within the `typical' range, throughout the supercritical regime.
Of course, since these three parameters are dependent,
the result can be stated in terms of any two of them; here we consider $L_1$ and $N_1$.
To state the result we need a few definitions.

For $\la>1$ let $\rho_{\la}$ be the unique positive solution to
\begin{equation}\label{rldef}
 1-\rho_{\la} = e^{-\la \rho_\la},
\end{equation}
so $\rho_\la$ is the survival probability of a Galton--Watson branching process
whose offspring distribution is Poisson with mean $\la$,
and define $\rho_{r,\la}$ by 
\begin{equation}\label{rkldef}
 1-\rho_{r,\la} = (1-\rho_{\la})^{1/(r-1)}.
\end{equation}
It is easy to see that $\rho_{r,\la}$ is the survival probability of a certain branching
process naturally associated to the neighbourhood exploration process in $\Hrnp$,
$p=\la (r-2)!n^{-r+1}$,
where each particle has a Poisson $\mathrm{Po}(\la/(r-1))$ number of groups of $r-1$ children.
From \eqref{rldef} and \eqref{rkldef} it is easy to check that
\begin{equation}\label{rlprop}
 \la\mapsto \rho_{r,\la}\hbox{ is a continuous function }(1,\infty)\to (0,1).
\end{equation}
Turning to the analogous parameter relevant to $N_1(\Hrnp)$, set
\begin{equation}\label{rhosdef}
 \rho_{r,\la}^* = \frac{\la}{r}\bb{1-(1-\rho_{r,\la})^r} - \rho_{r,\la}.
\end{equation}
As noted in~\cite{BR_hyp2}, if $\la=1+\eps$ then, as $\eps\to 0$ from above, we have
\begin{equation}\label{newrasympt}
 \rho_{r,\la} \sim \frac{2\eps}{r-1} \qquad\hbox{and}\qquad
 \rho_{r,\la}^* \sim \frac{2}{3(r-1)^2}\eps^3.
\end{equation}

\begin{theorem}\label{thprob}
Let $r\ge 2$ be fixed, let $p=p(n)=(1+\eps)(r-2)!n^{-r+1}$
where $\eps=\eps(n)$ satisfies $\eps\to 0$ and $\eps^3 n\to\infty$,
set $\la=\la(n)=1+\eps$ and define
$\rho_{r,\la}$ and $\rho^*_{r,\la}$ as above.
Then, whenever $x_n=\rho_{r,\la}n+O(\sqrt{n/\eps})$
and $y_n=\rho^*_{r,\la}n+O(\sqrt{\eps^3n})$ with $x_n+y_n-1$ divisible by $r-1$,
we have
\begin{equation}\label{pointprob}
 \Pr\bb{ L_1(\Hrnp) = x_n,\ N_1(\Hrnp)=y_n }
 \sim \frac{r-1}{\sigma_n\sigma^*_n} 
    f\left(\frac{x_n-\rho_{r,\la}n}{\sigma_n},\frac{y_n-\rho^*_{r,\la}n}{\sigma^*_n}\right)
\end{equation}
as $n\to\infty$, where $\sigma_n=\sqrt{2n/\eps}$, $\sigma_n^*=\sqrt{10/3}(r-1)^{-1}\sqrt{\eps^3n}$,
and
\begin{equation}\label{pdf}
 f(a,b) = \frac{1}{2\pi\sqrt{2/5}}\exp\left(-\frac{5}{4}(a^2-2\sqrt{3/5}\,ab+b^2)\right)
\end{equation}
is the probability density function of a bivariate Gaussian distribution with
mean~$0$, unit variances, and covariance $\sqrt{3/5}$.
\end{theorem}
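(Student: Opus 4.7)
The strategy is to combine the joint central limit theorem for $(L_1,N_1)$ established by the authors in~\cite{BR_hyp2} with a \emph{smoothing} argument that upgrades the CLT from a distributional statement to a pointwise one. From~\cite{BR_hyp2}, under Assumption~\ref{A0},
\[
 \Bb{ \frac{L_1-\rho_{r,\la}n}{\sigma_n},\ \frac{N_1-\rho^*_{r,\la}n}{\sigma^*_n} } \dto (A,B),
\]
where $(A,B)$ has density $f$. A standard continuity argument then gives, for a window $W$ of sides $\Delta_L=o(\sigma_n)$, $\Delta_N=o(\sigma^*_n)$ both tending to infinity and centred at a typical $(x_n,y_n)$,
\begin{equation}\label{windowest}
 \Pr\bb{(L_1,N_1)\in W} \sim \frac{\Delta_L\Delta_N}{\sigma_n\sigma^*_n}\, f\Bb{\frac{x_n-\rho_{r,\la}n}{\sigma_n},\ \frac{y_n-\rho^*_{r,\la}n}{\sigma^*_n}}.
\end{equation}

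The crux is a \emph{smoothing lemma}: for any two admissible lattice points $(x,y),(x',y')$ (those satisfying $x+y-1\equiv 0\pmod{r-1}$) in the typical range, with $\max(|x-x'|,|y-y'|)\le K(n)$ slowly growing, we have $\Pr(L_1=x',N_1=y')\sim\Pr(L_1=x,N_1=y)$. I would prove this via switching arguments based on single-hyperedge additions and deletions at the giant component. An elementary move---adding one hyperedge with $j\in\{1,\ldots,r\}$ vertices in the giant and $r-j$ isolated vertices outside---shifts $(L_1,N_1)$ by $(r-j,\,j-1)$; these shifts and their inverses generate the admissible sublattice. For each such move, double-counting probability-weighted pairs $(H,H+e)$ yields
\[
 \frac{\Pr(L_1=x+\Delta L,N_1=y+\Delta N)}{\Pr(L_1=x,N_1=y)} = \frac{p}{1-p}\cdot\frac{\E[b\mid L_1=x,N_1=y]}{\E[a\mid L_1=x+\Delta L,N_1=y+\Delta N]},
\]
where $b$ counts ``addable'' slots of the appropriate type in the source ensemble and $a$ counts ``removable'' edges of the same type in the target. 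Both expectations concentrate on predictable quantities (controlled by the isolated-vertex count $\sim(1-\rho_{r,\la})^{1/(r-1)}n$ and by the size and structure of the kernel of the giant) whose values vary by only a factor $1+o(1)$ over the $K$-neighbourhood; the $p/(1-p)$ factor balances the leading orders of $b$ and $a$, producing the ratio $1+o(1)$.

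Combining the smoothing lemma with~\eqref{windowest}, I pick $K=K(n)\to\infty$ with $K=o(\sigma_n)$ and $K=o(\sigma^*_n)$; the $2K\times 2K$ window around $(x_n,y_n)$ contains asymptotically $(2K)^2/(r-1)$ admissible lattice points (the admissible lattice having density $1/(r-1)$ in $\ZZ^2$), all of asymptotically the same probability by smoothing, and summing to the right-hand side of~\eqref{windowest}; dividing gives~\eqref{pointprob}. The main obstacle is implementing the switching cleanly. The ``add a hyperedge of type $j$'' operation is well-defined only under side conditions: the $r-j$ outside vertices must actually be isolated, the resulting component must remain the unique largest, and the chosen slot must be empty; the reverse operation requires that the removed edge leave the giant intact with $r-j$ cleanly isolated new vertices. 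Pathological configurations will be absorbed into a good event $\cG$ on which the switching is bijective up to a $1+o(1)$ multiplicative factor, and proving $\Pr(\cG\mid L_1=x,N_1=y)=1-o(1)$ uniformly in the typical range will lean heavily on the concentration estimates developed in~\cite{BR_hyp2}---in particular for the isolated-vertex count, the second-largest component, and the kernel structure of the giant.
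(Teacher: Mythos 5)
Your high-level architecture (bivariate CLT from \cite{BR_hyp2} plus a smoothing step, combined via a window argument that is essentially Proposition~\ref{gtol}) matches the paper's strategy, and your observation that the admissible shifts $(r-j, j-1)$ generate the sublattice $\{(a,b): a+b\equiv 0\ (\mathrm{mod}\ r-1)\}$ is correct. However, the mechanism you propose for the smoothing step --- single-hyperedge switchings controlled by concentration of conditional slot/edge counts --- is genuinely different from the paper's, and as sketched it has a real gap in precision. To traverse a $2K\times 2K$ window you chain on the order of $K$ single-step switchings; for the cumulative product of ratios to stay at $1+o(1)$ you need each per-step ratio to equal $1 + O(1/K)$, not merely $1+o(1)$. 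Since the relevant $K$ is of order $\sigma_n^*=\Theta(\sqrt{\eps^3 n})$ and, when moving in the $L_1$-direction, of order up to $\sigma_n = \Theta(\sqrt{n/\eps})$, this forces you to estimate the conditional expectations $\E[b\mid L_1=x,N_1=y]$ and $\E[a\mid L_1=x,N_1=y]$ to \emph{relative} error $O(\sqrt{\eps/n})$. That is far sharper than the concentration estimates available for these quantities (the number of isolated vertices alone already fluctuates at relative scale $\Theta(n^{-1/2})\gg\sqrt{\eps/n}$ once $\eps\to 0$), and you would need to prove such precision under conditioning on an event $\{L_1=x,\,N_1=y\}$ of probability only $\Theta(1/(\eps n))$. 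Writing ``both expectations concentrate on predictable quantities whose values vary by only a factor $1+o(1)$'' does not suffice: $(1+o(1))^K$ for $K\to\infty$ is uncontrolled unless the $o(1)$ is $o(1/K)$, or unless you exhibit a cancellation in the telescoping product, which you do not.

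The paper avoids exactly this trap. Instead of a direct switching comparison of $\Pr(L_1=x,N_1=y)$ to $\Pr(L_1=x',N_1=y')$, it decomposes $L_1 = X_n + Y_n$ with $X_n$ measurable with respect to an auxiliary $\sigma$-algebra $\cF_n$ (built from the extended core, marked vertices, and where the mantle attaches) and $Y_n \mid \cF_n$ drawn from a $\sqrt{n/\eps}$-smooth family (Lemma~\ref{ldist}); smoothness is then ``given away for free'' by $Y_n$'s explicit distribution, and only the \emph{variance order} of $Y_n$ needs to be right --- it may be a small constant multiple of $\Var L_1$. This is precisely the ``constant-factor elbow room'' discussed in Subsection~\ref{ss_complications}, and it is what makes the estimates on the extended core, marked vertices, and mantle (Lemmas~\ref{ecbig}--\ref{isolmu}) tractable. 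A parallel issue arises in the $N_1$-smoothing: the paper (Theorem~\ref{thexcess}) does not use edge switchings, but rather a two-round exposure engineered so that the residual set of unexplored edges has a near-binomial count with variance $\Theta(\eps^3 n)$ --- again, only the order matters. Your switching idea would need a fundamentally new ingredient (for instance, a telescoping identity with built-in cancellation, or a direct proof of $O(\sqrt{\eps/n})$-precision concentration of the relevant conditional counts) before it could close the gap; neither is sketched.
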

We shall comment briefly on the uniformity
of the asymptotics in~\eqref{pointprob} above in Remark~\ref{rem:asy1} below.
For ease of comparison with other results, note that combining \eqref{pointprob} and \eqref{pdf} results
in the expression
\begin{equation}\label{pp2}
 \frac{\sqrt{6}}{8\pi}\frac{(r-1)^2}{\eps n} \exp\left(-\frac{5}{4}(a^2-2\sqrt{3/5}\,ab+b^2)\right),
\end{equation}
with $a$ and $b$ the arguments of $f$ in \eqref{pointprob}.

The probability that the largest component of $\Hrnp$ has $\ell$  vertices and $m$ edges
is very closely related to the number
of connected hypergraphs with $\ell$ vertices and $m$ edges. This relationship
was used by Karo\'nski and {\L}uczak~\cite{KL_giant} to prove the special case of Theorem~\ref{thprob}
when $\eps^3n\to\infty$ but $\eps^3n=o(\log n/\log\log n)$. 
Behrisch, Coja-Oghlan and Kang~\cite{BC-OK1,BC-OK2a} used probabilistic methods to prove a
result corresponding to Theorem~\ref{thprob} but with $\eps=\Theta(1)$ (i.e.,
roughly speaking, the case $\la>1$ constant), and then, in~\cite{BC-OK2b}, used this to deduce their enumerative
result mentioned in the previous section. We shall deduce Theorem~\ref{thenum}
from Theorem~\ref{thprob} in Section~\ref{sec_deduce}.

At a very high level, the strategy of the proof of Theorem~\ref{thprob} is similar
to that followed by Behrisch, Coja-Oghlan and Kang~\cite{BC-OK2a} for the
case $\eps=\Theta(1)$: we start from
the corresponding central limit theorem (proved very recently in~\cite{BR_hyp2}),
and apply `smoothing' arguments to deduce the local limit theorem. However,
the details are very different: Behrisch, Coja-Oghlan and Kang apply this technique to a univariate 
result for $L_1$ only, and then use a different argument going via the hypergraph
model analogous to $G(n,m)$ to deduce a bivariate result. This method
does not appear to work when $\eps\to 0$. Instead, we apply two smoothing arguments; one
to handle the nullity (or excess), and then one for the number of vertices.

Bivariate local limit results do not necessarily imply the corresponding
univariate local limit results, due to the possibility of a `bad' event $\cB$
on which one of the two parameters takes a `typical' value and the other does not,
with $\Pr(\cB)=o(1)$ but $\Pr(\cB)$ large compared to the relevant point probabilities.
However, the method used to prove Theorem~\ref{thprob} gives the following local limit
results for $L_1(\Hrnp)$ and $N_1(\Hrnp)$ separately.

\begin{theorem}\label{thL1only}
Let $r\ge 2$ be fixed, and let $p=p(n)=(1+\eps)(r-2)!n^{-r+1}$
where $\eps=\eps(n)$ satisfies $\eps\to 0$ and $\eps^3 n\to\infty$.
Set $\la=\la(n)=1+\eps$ and define $\rho_{r,\la}$ as in \eqref{rkldef}.
Then whenever $x_n=\rho_{r,\la}n+O(\sqrt{n/\eps})$ we have
\[
 \Pr\bb{ L_1(\Hrnp) = x_n} \sim \frac{1}{2\sqrt{\pi n/\eps}} 
 \exp\left(-\frac{(x_n-\rho_{r,\la} n)^2}{4n/\eps}\right)
\]
as $n\to\infty$.
\end{theorem}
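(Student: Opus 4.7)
The plan is to run a univariate version of the smoothing machinery used to prove Theorem \ref{thprob}, applied only to the vertex parameter $L_1$. Since the proof of the bivariate Theorem \ref{thprob} combines the joint central limit theorem of \cite{BR_hyp2} with two smoothing steps (one for nullity, one for vertices), dropping the nullity-smoothing step and keeping only the vertex-smoothing step should produce the univariate local limit theorem for $L_1$ directly; this is the sense in which the authors describe the result as easy.

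Concretely, the two ingredients are the marginal central limit theorem for $L_1$, a direct consequence of the joint CLT of \cite{BR_hyp2} asserting that $(L_1-\rho_{r,\la}n)/\sigma_n$ converges to a standard Gaussian with $\sigma_n=\sqrt{2n/\eps}$, together with a vertex-smoothing lemma of the form
\[
\Pr\bb{L_1(\Hrnp)=x+j}=(1+o(1))\,\Pr\bb{L_1(\Hrnp)=x},
\]
uniform over $|x-\rho_{r,\la}n|=O(\sqrt{n/\eps})$ and $|j|\le J$, for some $J=J(n)\to\infty$ with $J=o(\sigma_n)$. Given these two inputs, the local limit theorem follows by the standard `CLT plus smoothing implies LLT' averaging argument: fix $x_n$ in the typical range and $K=K(n)$ with $K\to\infty$ and $K=o(\sigma_n)$, and observe that
\[
\sum_{j=-K}^{K}\Pr\bb{L_1=x_n+j}\sim(2K+1)\Pr\bb{L_1=x_n}
\]
by the smoothing lemma, while the same sum is also asymptotic to $(2K+1)\sigma_n^{-1}(2\pi)^{-1/2}\exp\bb{-(x_n-\rho_{r,\la}n)^2/(2\sigma_n^2)}$ by the marginal CLT applied to the window $[x_n-K,x_n+K]$. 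Dividing by $2K+1$ and substituting $\sigma_n^2=2n/\eps$ yields exactly the advertised formula.

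The main obstacle is the vertex-smoothing lemma itself, which is the technical heart of the proof of Theorem \ref{thprob}. Its proof typically proceeds via a coupling that perturbs an instance of $\Hrnp$ conditioned on $L_1=x$ into one with $L_1$ shifted by $j$, controlling the probability transfer in both directions. For the present theorem no new technical machinery is required, since the vertex step is a strict sub-argument of the bivariate smoothing work; one only has to check that it goes through without leaning on the nullity step for bookkeeping. An alternative route would be to integrate Theorem \ref{thprob} in the nullity variable directly, but, as the authors point out, this requires an additional exponential tail bound for $N_1$ in order to rule out `bad' events concentrated on a single value of $L_1$ and contributing at scale $1/\sigma_n$, which is arguably more work than simply reusing the univariate smoothing step.
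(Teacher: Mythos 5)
Your proposal matches the paper's own approach: the authors prove Theorem~\ref{thL1only} by rerunning the vertex-smoothing argument of Section~\ref{sec_main} with all conditioning on $N_1$ dropped, replacing the target scale $1/(\eps n)$ for point probabilities by $\sqrt{\eps/n}$, and then feeding the resulting smoothness into the marginal CLT for $L_1$ via Proposition~\ref{gtol}, exactly as you describe. One caveat to your remark that ``no new technical machinery is required'': when the conditioning on $N_1=t\ge 2$ is dropped, the proof of Lemma~\ref{ecbig} can no longer simply assert that $\core(\cL_1)$ is non-empty, and in place of \eqref{Ulikely} one needs $\Pr(\cU\mid L_1=s)=1-o(1)$; the paper patches both via the Karo\'nski--\L uczak enumeration showing $\Pr(n(\cL_1)=0\mid L_1=s)=o(1)$, which is a small extra input not present in the bivariate argument.
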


\begin{theorem}\label{thN1}
Let $r\ge 2$ be fixed, let $p=p(n)=(1+\eps)(r-2)!n^{-r+1}$ where $\eps=\eps(n)\to 0$
and $\eps^3n\to\infty$, and set $\la=\la(n)=1+\eps$. For any $t_n\ge 0$ we have
\[
 \Pr\bb{ N_1(\Hrnp) = t_n } = \frac{1}{\sigma^*_n\sqrt{2\pi}} \exp\left(-\frac{(t_n-\rho^*_{r,\la}n)^2}{2(\sigma^*_n)^2}\right) + o(1/\sigma^*_n),
\]
where $\rho^*_{r,\la}$ is defined in \eqref{rhosdef} and $\sigma^*_n=\sqrt{10/3}(r-1)^{-1}\sqrt{\eps^3n}$.
\end{theorem}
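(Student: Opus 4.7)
The plan is to combine a univariate central limit theorem for $N_1(\Hrnp)$ with a nullity-smoothing step, in exactly the spirit of the first of the two smoothing arguments used for Theorem~\ref{thprob}. The CLT is already in hand: marginalizing the joint central limit theorem of~\cite{BR_hyp2} gives
\[
 \frac{N_1(\Hrnp)-\rho_{r,\la}^* n}{\sigma_n^*} \dto \mathcal{N}(0,1).
\]
The remaining task is to show that the point probabilities $P_n(t):=\Pr(N_1(\Hrnp)=t)$ vary slowly on the unit scale inside the bulk, since this, combined with the CLT, forces $P_n(t)$ to agree with the Gaussian density up to $o(1/\sigma_n^*)$.

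For the smoothing I would couple $\Hrnp\subseteq\Hrnpd$ with $p'=p(1+\eta)$ for a carefully chosen small $\eta=\eta(n)$, so the excess edges form an (approximately) binomial random edge set of expected size $\eta p\binom{n}{r}$. A typical new edge falls entirely inside the giant of $\Hrnp$ with probability $\sim\rho_{r,\la}^r$ and, when it does, increases $N_1$ by exactly $r-1$; edges falling entirely outside the giant or attaching a small tree component to it contribute $0$ or a controlled bounded amount. Choosing $\eta$ so that the expected number of ``internal'' new edges is of order $\sigma_n^*/(r-1)$ but the typical realisation deviates from this by only $o(\sigma_n^*)$, the shift $D:=N_1(\Hrnpd)-N_1(\Hrnp)$ is sharply concentrated yet spread across all relevant residues modulo $r-1$. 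Because the CLT applies in both models with asymptotically the same variance $(\sigma_n^*)^2$ and a predictable shift in mean, comparing the laws of $N_1(\Hrnp)$ and $N_1(\Hrnpd)$ via the approximate convolution relation $\Pr(N_1(\Hrnpd)=t)\approx\sum_s P_n(t-s)\Pr(D=s)$ forces
\[
 P_n(t+s) \;=\; P_n(t) + o(1/\sigma_n^*),
\]
uniformly for $s=O(1)$ and $t$ in the bulk.

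Combining the slow variation with the CLT by the standard averaging argument (averaging $P_n(t)$ over a window of length $\ell$ with $1\ll\ell\ll\sigma_n^*$ and evaluating the average by the CLT) then yields the Gaussian local density formula for $t_n$ in the bulk. For $t_n$ outside the bulk, i.e.\ $|t_n-\rho_{r,\la}^* n|/\sigma_n^*\to\infty$, both sides of the claimed equality are $o(1/\sigma_n^*)$: the Gaussian density is, and so is $P_n(t_n)$, by the Chernoff-type tail bounds that already underlie the CLT of \cite{BR_hyp2}. This gives the additive form of the statement uniformly for all $t_n\ge 0$, and in particular covers $t_n$ below $\rho_{r,\la}^*n$ (where complicated effects from small complex components might otherwise be a worry).

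The main obstacle is the quantitative implementation of the smoothing, in particular the fact that, under the natural coupling, the shift $D$ is not independent of $N_1(\Hrnp)$: whether a new edge lies inside the giant depends on $L_1(\Hrnp)$, which is correlated with $N_1(\Hrnp)$ through the bivariate structure. Resolving this requires either conditioning on the vertex set of the giant (so that the new internal edges become conditionally independent of the nullity already present) or absorbing the residual correlation into the $o(1/\sigma_n^*)$ error via moment estimates; both routes are already implicit in the machinery of Theorem~\ref{thprob}, so Theorem~\ref{thN1} should drop out as essentially a by-product, without needing the second (vertex) smoothing used for the bivariate statement.
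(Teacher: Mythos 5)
Your high-level plan (marginal CLT plus nullity-smoothing plus an averaging step) is exactly the paper's plan: the paper deduces Theorem~\ref{thN1} from Theorem~\ref{thglobal2} (for the CLT) and Theorem~\ref{thexcess} (for the smoothness), via Proposition~\ref{gtol}. The smoothing in Theorem~\ref{thexcess} is, like yours, a two-round exposure: the paper splits $p$ as $p_1+p_2-p_1p_2$, reveals $H_1=\Hrnpone$ and almost all of $H_2=\Hrnptwo$, and leaves as the smoothing variable the number of $H_2$-edges of a very restricted form (two vertices in the $H_1$-giant, $r-2$ in the set of currently isolated vertices, with the isolated-vertex blocks pairwise disjoint), each of which contributes exactly $1$ to the nullity.

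There is, however, a genuine gap in your implementation, and it is one of direction. You couple $\Hrnp\subset\Hrnpd$ with $p'>p$, and you write the approximate convolution identity $\Pr(N_1(\Hrnpd)=t)\approx\sum_s P_n(t-s)\Pr(D=s)$. Even if this identity were exact and $D$ were genuinely independent of $N_1(\Hrnp)$, it would only let you compute the law of $N_1(\Hrnpd)$ from $P_n$; it does not constrain $P_n$ itself. Convolving with a spread-out $\nu$ always produces something smooth, so no information about the smoothness of $P_n$ flows back. What the method actually requires --- this is precisely the structure abstracted in Lemma~\ref{ss} --- is a decomposition $N_1(\Hrnp)=X_n+Y_n$ in which $X_n$ is $\cF_n$-measurable and the conditional law of $Y_n$ given $\cF_n$ lies in a $\sigma_n^*$-smooth family. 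To get that for $N_1(\Hrnp)$ you must decompose $\Hrnp$ as a union of a sparser random hypergraph with independent extra edges, condition on the sparser hypergraph plus most of the extra information, and leave a smooth residual. Coupling upward to a denser $\Hrnpd$ gives a smoothing variable for $N_1(\Hrnpd)$, not for $N_1(\Hrnp)$; you would need to swap the roles of $p$ and $p'$ (take $p'<p$) and then rerun your argument inside $\Hrnp$.

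Once the direction is corrected, your sketch is morally the paper's, but two further technical points you wave at actually require real work and are handled carefully in the paper. First, the residual distribution is not binomial: because the unrevealed edges must use pairwise disjoint $(r-2)$-sets of isolated vertices, the conditional law of the number of such edges is a hypergeometric-like count $p_a$ whose smoothness the paper verifies by explicit ratio estimates \eqref{qa}--\eqref{padiff}; you cannot simply invoke binomial concentration. Second, you cannot just ``absorb the residual correlation'' between $D$ and $N_1(\Hrnp)$ into the error --- the whole point of the two-step exposure in the paper (first test $r$-sets not of the special form, then reveal edges that clash with other $H_2$-edges) is to manufacture exact conditional independence given the $\sigma$-algebra $\cF$, so that the smoothness of $Y_n$ conditional on $\cF$ can be applied cleanly via Lemma~\ref{ss}.
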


Our main results assume our Standard Assumption~\ref{A0};
however, all our arguments can be extended, with
varying amounts of additional work (and more complicated statements), to require
only our Weak Assumption~\ref{A1}. Since the results of Behrisch, Coja-Oghlan and Kang~\cite{BC-OK1,BC-OK2a}
cover the case $\eps=\Theta(1)$, we assume $\eps\to 0$ much of the time for simplicity.

In the probabilistic setting, a local (central) limit theorem is not the last possible word.
One could ask for moderate and/or large deviation results (indeed, Eyal Lubetzky has asked
us this question). We have not pursued these questions, but for a wide range of the parameters
Lemma~\ref{lbigep} shows that the probability that the largest component of $\Hrnp$
has $s$ vertices and nullity $t$ is asymptotic to the expected number of components
of $\Hrnp$ with these parameters. This expectation can of course be calculated using
Theorem~\ref{thenum}. This method should give tight results for all moderate deviations
and some (but not all) large deviations.

\begin{nremark}\rm
Instead of the model $\Hrnp$ one could consider the analogue $\Hrnm$ of the original Erd\H os--R\'enyi
size model, where we select an $m$-edge $r$-uniform hypergraph on $[n]$ uniformly at random. Relating
$m$ and $p$ by $p=m/\binom{n}{r}$, Theorem~\ref{thprob} implies an analogous result
for this model. (This is not completely obvious, but can be shown using Theorem~\ref{thenum} as an intermediate
step; alternatively, one can use Lemma~\ref{lbigep} and its analogue for $\Hrnm$, and directly
relate the expected number of $s$-vertex $k$-edge components in the models $\Hrnp$ and $\Hrnm$.)
Behrisch, Coja-Oghlan and Kang~\cite{BC-OK2a} prove such a result in the denser setting,
i.e., when $\la>1$ is constant.
Here, unlike in~\cite{BC-OK2a}, the parameters of the local limit theorem in $\Hrnm$ are exactly the same
as those in $\Hrnp$. Very informally this should be no surprise, since the conversion between models
corresponds to changing the number of edges by a random number of order $O(\sqrt{n})$. Such a change changes the typical
size of the giant component by $O(\sqrt{n})$ vertices, which (in our range) is small compared to the
standard deviation $\sqrt{n/\eps}$. Similarly, the change in the nullity from switching from one model
to the other is $O(\eps^2\sqrt{n}) =o(\sqrt{\eps^3n})$.
\end{nremark}

\subsection{Related work}\label{sec_past}

We have already mentioned a number of previous enumerative results related to
Theorem~\ref{thenum}. In this subsection we shall outline a number of previous probabilistic results
related to Theorem~\ref{thprob}, but first we introduce some general terminology.

Let $(A_n)$ be a sequence of integer-valued random variables. We say that $(A_n)$ satisfies
a \emph{global limit theorem} with parameters $\mu_n$ and $\sigma_n$
if $(A_n-\mu_n)/\sigma_n$ converges in distribution to some
distribution $Z$ on the reals whose density function $\phi(x)$
is continuous and strictly positive.
We say that $(A_n)$ satisfies the corresponding \emph{local limit theorem}
if, for any sequence $(x_n)$ of integers with $x_n=\mu_n+O(\sigma_n)$, we have
\begin{equation}\label{llt}
 \Pr( A_n=x_n) \sim \frac{\phi\bb{ (x_n-\mu_n)/\sigma_n }}{\sigma_n}
\end{equation}
as $n\to\infty$. In the examples considered
here, $Z$ will always be the standard normal distribution $N(0,1)$,
but this is not necessary for the general arguments.  These definitions
extend in a natural way to \emph{bivariate} global and local limit
theorems for sequences $(A_n,B_n)$. In these terms, Theorem~\ref{thprob}
is a bivariate local limit theorem for the pair $(L_1(\Hrnp),N_1(\Hrnp))$.

\begin{nremark}\label{rem:asy1}\rm
Let us comment in some detail on the issue of uniformity in asymptotics such as
\eqref{llt} above, since this may perhaps cause some confusion. In general, we 
adopt the approach of quantifying over sequences, since this seems intuitive
and avoids lengthy sequences of quantifiers. For example, writing $\eta(n,x_n)$ for the ratio
of the two sides of \eqref{llt} above,  the precise interpretation of \eqref{llt}
is the following: for any sequence $(x_n)$ with the property that
$\sup_n |x_n-\mu_n|/\sigma_n<\infty$, we have $\eta(n,x_n)\to 1$ as $n\to\infty$.
Thus the rate at which $\eta(n,x_n)$ tends to 1 is allowed to depend on the choice
of the sequence $(x_n)$. 

Of course, such a statement automatically gives a certain kind of uniformity: given a constant $C$,
for each $n$ let $x_n^{\pm}$ denote the choices of $x_n$ with $|x_n-\mu_n|\le C\sigma_n$
that maximize/minimize the ratio $\eta(n,x_n)$. Applying \eqref{llt} to the sequences
$(x_n^+)$ and $(x_n^-)$ gives
$\eta(n,x_n^{\pm})\to 1$, so we have the uniform statement
\[
 \max_{x\::\:|x-\mu_n|\le C\sigma_n} \eta(n,x) \to 1
\]
as $n\to\infty$, and the same for $\min$.

In most of our results, we quantify over $r\ge 2$, the choice of a sequence $(p(n))$ satisfying
certain assumptions, and then perhaps additional sequences such as
the sequences $(x_n)$ and $(y_n)$ appearing in
Theorem~\ref{thprob}. The results then state that with all these choices fixed, a certain
sequence indexed by $n$ is $O(1)$ or $o(1)$.
As above, although the bounds are not claimed to be uniform, bounds that are uniform
over suitable sets of choices follow immediately.
\end{nremark}

As usual we say that an event $E=E_n$ (formally a sequence $(E_n)$ of events) holds
\emph{with high probability}, or \emph{whp}, if $\Pr(E_n)\to 1$ as $n\to\infty$.
Analogous to the classical 1960 result of Erd\H{o}s and R\'enyi~\cite{ERgiant} for the case of graphs,
in 1985 Schmidt-Pruzan and Shamir~\cite{S-PS} showed that if $r\ge 2$ is constant (which we
assume throughout) and $p=p(n)=\la (r-2)!n^{-r+1}$, then the random hypergraph
$\Hrnp$ undergoes a phase
transition at $\la=1$: for $\la<1$ constant, whp $L_1(\Hrnp)$ is at most a constant
times $\log n$, if $\la=1$ then $L_1(\Hrnp)$ is of order $n^{2/3}$, and if $\la>1$
is constant then whp $L_1(\Hrnp)\ge c_{r,\la} n$ for some constant $c_{r,\la}>0$.
The model studied in~\cite{S-PS} is in fact more general, allowing edges of different
sizes up to $O(\log n)$.

The case where the `branching factor' $\la$ is bounded and bounded away from $1$ is
essentially equivalent to that where $\la>1$ is constant; we shall not distinguish them 
in this discussion. Still considering this case, in 2007 Coja-Oghlan, Moore and Sanwalani~\cite{C-OMS}
refined the results of Schmidt-Pruzan and Shamir, finding in particular the asymptotic value $\rho_{r,\la}n$ 
of $L_1(\Hrnp)$ in the supercritical case, and giving an asymptotic formula
for its variance.
In 2010 Behrisch, Coja-Oghlan and Kang~\cite{BC-OK1} went further when they established
the limiting distribution of $L_1(\Hrnp)$ in the regime $\la>1$ constant:
they used random walk
and martingale methods to establish a central limit theorem, and then
a smoothing technique, combined with multi-round exposure 
(ideas that appear in a slightly different form in~\cite{C-OMS}), to deduce
the corresponding local limit theorem. In~\cite{BC-OK2a} they deduced from
this a \emph{bivariate} local limit theorem for $L_1(\Hrnp)$ and $M_1(\Hrnp)$ (equivalent
to one for $L_1(\Hrnp)$ and $N_1(\Hrnp)$)
under the same assumption $\la>1$ constant. This result is directly analogous
to Theorem~\ref{thprob} except that $\eps=\Theta(1)$ rather than $\eps\to 0$,
and, as shown in~\cite{BC-OK2b}, leads to an enumerative result
analogous to Theorem~\ref{thenum}, but for hypergraphs with nullity $\Theta(s)$,
where $s$ is the number of vertices.

Turning to the case where $\la=\la(n)\to 1$, let us write $\la$ as $1+\eps$ with
$\eps=\eps(n)$. Building on enumerative results of theirs~\cite{KL_sparse} from 1997,
in 2002 Karo\'nski and \L uczak~\cite{KL_giant} proved a bivariate local limit theorem
for $L_1(\Hrnp)$ and $N_1(\Hrnp)$ just above the `critical window'
$\eps=O(n^{-1/3})$ of the phase transition, in the range where $\eps^3n\to\infty$ 
but $\eps^3n=o(\log n/\log\log n)$.
In an extended abstract from 2006, Andriamampianina and Ravelomanana~\cite{AR} outlined
an extension of the enumerative results of Karo\'nski and \L uczak~\cite{KL_sparse} to treat
hypergraphs with much larger excess (or nullity); this implies an extension of the local
limit theorem of~\cite{KL_giant} to the range where $\eps^3n\to\infty$ but $\eps^4n\to 0$.
These results illustrate a general phenomenon in this field: it seems that the barely
supercritical case is more accessible to enumerative methods, and the strongly
supercritical case ($\la>1$ constant) to probabilistic methods.

In the special case of graphs, even more detailed results have been proved.
Following many earlier results (see,
for example, the references in~\cite{PWio}), in 2006 Luczak and \L uczak proved a local
limit theorem for $L_1(\Htnp)$ throughout the entire supercritical regime, i.e., when $\la=1+\eps$
with $\eps^3n\to\infty$ and $\eps=O(1)$, as part of a more general result about
the random cluster model. Slightly earlier, Pittel and Wormald~\cite{PWio} had come
very close to proving a \emph{trivariate} local limit theorem for $L_1(\Htnp)$, $N_1(\Htnp)$
and a third parameter, the number of vertices in the `core'.
More precisely, they proved a trivariate local limit theorem for the conditional
distribution of these parameters where
the conditioning is on the event that there
is a unique giant component of approximately the right size, an event that holds
with probability $1-o(1)$. With 
hindsight it is easy to remove the conditioning using, for example, Lemma~\ref{lbigep}.

\smallskip
Returning to hypergraphs, if we ask for results covering the entire (weakly) supercritical
regime $\eps^3n\to\infty$, $\eps\to 0$, it is only recently that anything non-trivial has
been proved about the giant component.
Indeed, as far as we are aware, the first result of this type
is the central limit theorem for $L_1(\Hrnp)$ proved in~\cite{BR_hyp}, using
random walk and martingale arguments. A bivariate central
limit theorem for $L_1(\Hrnp)$ and $N_1(\Hrnp)$ was proved very
recently in~\cite{BR_hyp2}, using similar methods.
Here we shall use smoothing ideas as in~\cite{C-OMS,BC-OK2a}, but applied in a 
very different way, to deduce the corresponding bivariate local limit theorem,
Theorem~\ref{thprob}; Theorem~\ref{thenum} will then follow easily.

The methods of Sato and Wormald~\cite{SatoWormald} are extensions
of those used by Pittel and Wormald~\cite{PWio} and so, in the range in which
they apply (i.e., $r=3$, and $p=(1+\eps)(r-2)!n^{-r+1}$ where $\eps=\eps(n)\to 0$
but $\eps^4n/\log^{3/2}n\to\infty$), may potentially lead to a trivariate
local limit result for $L_1$, $N_1$ and the number of vertices in the core. 
As far as we are aware,
whether such a result can be proved throughout the range $\eps\to 0$ but $\eps^3n\to\infty$,
or for $r>3$, is currently open.

\medskip
In the next section we illustrate the basic strategy of our proof of Theorem~\ref{thprob}
by showing how the same idea can be applied in a much simpler setting. Then, in Subsection~\ref{ss_complications},
we describe some of the complications that will arise when we implement this idea to prove Theorem~\ref{thprob}.
Only then, in Subsection~\ref{ss_org}, do we describe the organization of the rest of the paper.
The reason for this is that almost all of the paper is devoted to the proof of Theorem~\ref{thprob},
and our description of the key steps in and structure of this proof will only make
sense after the discussion earlier in Section~\ref{sec_example}.
Formally, next to nothing in Section~\ref{sec_example} is required in the later sections;
the exception is that we use Proposition~\ref{gtol} in the proof of Theorem~\ref{thN1}.

\section{Smoothing: a simple example}\label{sec_example}

The following trivial, standard observation captures the intuition
that `local smoothness' is what is needed to pass from a global limit theorem
to the corresponding local one.

\begin{proposition}\label{gtol}
Suppose that a sequence $(A_n)$ of random variables
satisfies a global limit theorem with parameters $\mu_n$ and $\sigma_n$,
and that $\Pr(A_n=x_n)-\Pr(A_n=x_n')=o(1/\sigma_n)$ as $n\to\infty$
whenever $x_n=\mu_n+O(\sigma_n)$ and $x_n-x_n'=o(\sigma_n)$.
Then $(A_n)$ satisfies the corresponding local limit theorem.
\end{proposition}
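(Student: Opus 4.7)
The plan is to sandwich $\Pr(A_n=x_n)$ between two evaluations of the aggregate probability $\Pr(A_n\in I_n)$ over a short interval
\[
 I_n=\partit{k\in\ZZ}{|k-x_n|\le\delta_n\sigma_n}
\]
centred on $x_n$, where $\delta_n\to 0$ will be chosen slowly enough that $\delta_n\sigma_n\to\infty$. Writing $a_n=(x_n-\mu_n)/\sigma_n$, which by hypothesis lies in some bounded set $[-C,C]$, the aim is to show that the summation estimate of $\Pr(A_n\in I_n)$ matches the integral estimate to leading order, giving $\sigma_n\Pr(A_n=x_n)\to\phi(a_n)$.

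For the first (summation) estimate, a routine diagonal argument turns the smoothness hypothesis (currently phrased for a pair of sequences) into a uniform statement: for any $\delta_n\to 0$, $\Pr(A_n=k)-\Pr(A_n=x_n)=o(1/\sigma_n)$ uniformly over $k\in I_n$. Summing over the $2\delta_n\sigma_n+O(1)$ integers of $I_n$ gives
\[
 \Pr(A_n\in I_n)=\bb{2\delta_n\sigma_n+O(1)}\Pr(A_n=x_n)+o(\delta_n).
\]
For the second (distributional) estimate, the global limit theorem combined with Polya's uniform convergence lemma (applicable since $Z$ has continuous CDF $\Phi$) gives $f(n):=\sup_y|\Pr((A_n-\mu_n)/\sigma_n\le y)-\Phi(y)|\to 0$, whence
\[
 \Pr(A_n\in I_n)=\Phi(a_n+\delta_n)-\Phi(a_n-\delta_n)+O(f(n))=2\delta_n\phi(a_n)+O(\delta_n^2)+O(f(n))
\]
by continuity of $\phi$ and boundedness of $a_n$.

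The main (indeed only) obstacle is choosing $\delta_n$ to reconcile the three competing requirements $\delta_n\to 0$ (so the smoothness hypothesis applies and $\phi(a_n\pm\delta_n)\sim\phi(a_n)$), $\delta_n\sigma_n\to\infty$ (so $I_n$ contains arbitrarily many integers), and $f(n)=o(\delta_n)$ (so the global-LLT error is negligible beside the main term $2\delta_n\phi(a_n)$). Since $\sigma_n\to\infty$ (otherwise the global limit theorem is incompatible with integer-valuedness and a continuous limit) and $f(n)\to 0$, the choice $\delta_n=\max\bb{\sqrt{f(n)},\,1/\log\sigma_n}$ works: it tends to $0$, satisfies $f(n)/\delta_n\le\sqrt{f(n)}\to 0$, and gives $\delta_n\sigma_n\ge\sigma_n/\log\sigma_n\to\infty$. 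Equating the two expressions for $\Pr(A_n\in I_n)$ and dividing by $2\delta_n\sigma_n$ then yields $\sigma_n\Pr(A_n=x_n)=\phi(a_n)+o(1)$, which, since $\phi(a_n)$ is bounded away from zero on the compact set $[-C,C]$, is exactly the desired local limit theorem.
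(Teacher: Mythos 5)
Your proof is correct and takes essentially the same approach as the paper's: average the point probabilities over a short interval $x_n\pm\delta_n\sigma_n$ with $\delta_n\to0$ and $\delta_n\sigma_n\to\infty$, estimate the interval probability via the global limit theorem, and use the smoothness hypothesis to show every point probability in the interval is within $o(1/\sigma_n)$ of that average. The only difference is cosmetic: where the paper says ``if $\delta_n\to0$ slowly enough,'' you spell out an explicit admissible choice $\delta_n=\max\{\sqrt{f(n)},1/\log\sigma_n\}$ and make the diagonalisation that upgrades the pairwise smoothness hypothesis to a uniform one explicit.
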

Once again, we quantify over sequences: the precise assumption is that for every pair of sequences $(x_n)$ and $(x_n')$
such that $(x_n-x_n')/\sigma_n\to 0$ and $\sup_n |x_n-\mu_n|/\sigma_n <\infty$,
we have $\sigma_n(\Pr(A_n=x_n)-\Pr(A_n=x_n'))\to 0$.
\begin{proof}
Let $\phi(x)$ be the density function associated to the global limit theorem,
and $\Phi(x)=\int_{y<x} \phi(y) \dy$ the corresponding distribution function.
Fix a sequence $(x_n)$ with $x_n=\mu_n+O(\sigma_n)$; by our definition of a local limit theorem
it suffices to show that $\Pr(A_n=x_n)\sim \phi((x_n-\mu_n)/\sigma_n)/\sigma_n$.
Let $C=2\sup_n |x_n-\mu_n|/\sigma_n$, which is finite by assumption.

The global limit theorem implies that for any fixed $x\in [-C,C]$ we have
\[
 \Pr(A_n\le \mu_n+x\sigma_n) = \Phi(x) +o(1)
\]
as $n\to\infty$; since $\Phi(x)$ is continuous the same estimate holds uniformly
in $x\in [-C,C]$.
It follows that if $\delta_n\to 0$ slowly enough, then
\begin{align*}
 \Pr\bb{ x_n - \delta_n\sigma_n < A_n \le x_n + \delta_n\sigma_n }
 &\sim \Phi\left(\frac{x_n-\mu_n}{\sigma_n} + \delta_n\right) - \Phi\left(\frac{x_n-\mu_n}{\sigma_n} - \delta_n\right)  \\
 &\sim 2\delta_n \phi\left(\frac{x_n-\mu_n}{\sigma_n}\right).
\end{align*}
Let $I_n$ be the set of integers $x$ with $x_n-\delta_n\sigma_n < x \le x_n+\delta_n\sigma_n$,
and let $x_n^\pm\in I_n$ be chosen to maximize and minimize $\Pr(A_n=x)$.
Since $x_n^+=\mu_n+O(\sigma_n)$ and $x_n^+-x_n^- = o(\sigma_n)$, by assumption
$\Pr(A_n=x_n^+)$ and $\Pr(A_n=x_n^-)$ differ by $o(1/\sigma_n)$.
It follows that all $2\delta_n\sigma_n+O(1)$ values of $\Pr(A_n=x)$ for $x\in I$
are within $o(1/\sigma_n)$ of each other and hence of their average, which
is $(1+o(1))\phi((x_n-\mu_n)/\sigma_n)/\sigma_n$.
\end{proof}

A standard technique for establishing the smoothness required
by Proposition~\ref{gtol} is to find a `smooth part'
within the distribution of $A_n$.
Given a sequence $(\sigma_n)$ of positive real numbers,
we call a sequence $(\cD_n)$ of sets of probability distributions
on the integers \emph{$\sigma_n$\nobreakdash-smooth}
if the following conditions hold whenever $(Y_n)$ is a sequence
of random variables such that the distribution of $Y_n$ is in $\cD_n$:
\begin{equation}\label{smcond}
\hbox{if $y_n-y_n'=o(\sigma_n)$ then $|\Pr(Y_n=y_n)-\Pr(Y_n=y_n')|=o(1/\sigma_n)$.}
\end{equation}

To give a simple example of a smooth sequence, suppose that
$\sigma_n\to\infty$, fix a constant $c>0$,
and let $\cD_n$ be the family of all binomial distributions with variance
at least $c\sigma_n^2$. It is easy to check that $(\cD_n)$ is $\sigma_n$-smooth, for example
directly from the formula for the binomial distribution.
Note that the number of trials in the binomial distributions need not be $n$,
or even $\Theta(n)$.

The following trivial observation describes at a high level the general
strategy that we shall use to prove Theorem~\ref{thprob};
of course there will be many complications to overcome.

\begin{lemma}\label{ss}
Let $(\sigma_n)$ be a sequence of positive reals,
and let $(\cD_n)$ be $\sigma_n$-smooth.
Let $(\cF_n)$ be a sequence of $\sigma$-algebras,
and suppose that we can write $A_n$ as $X_n+Y_n$, where $X_n$ and $Y_n$ are integer-valued,
$X_n$ is $\cF_n$-measurable, and the conditional distribution of $Y_n$ given $\cF_n$ is always
in $\cD_n$.
If $(A_n)$ satisfies a global limit theorem with parameters $\mu_n$
and $\sigma_n$, then $(A_n)$ satisfies the corresponding local limit theorem.
\end{lemma}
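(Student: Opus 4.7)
The plan is to reduce to Proposition~\ref{gtol} by conditioning on $\cF_n$. Given the assumed global limit theorem for $(A_n)$, it suffices to check that $\Pr(A_n=x_n)-\Pr(A_n=x_n')=o(1/\sigma_n)$ whenever $x_n=\mu_n+O(\sigma_n)$ and $x_n-x_n'=o(\sigma_n)$. Since $X_n$ is $\cF_n$-measurable and $A_n=X_n+Y_n$, for any integer $z$ we have $\Pr(A_n=z\mid\cF_n)=\Pr(Y_n=z-X_n\mid\cF_n)$, so
\[
 \Pr(A_n=x_n)-\Pr(A_n=x_n')
 = \E\!\Bb{\Pr(Y_n=x_n-X_n\mid\cF_n)-\Pr(Y_n=x_n'-X_n\mid\cF_n)}.
\]
Pointwise in the underlying sample space the conditional distribution of $Y_n$ lies in $\cD_n$, and the two integer arguments of this conditional distribution differ by the \emph{deterministic} quantity $x_n-x_n'=o(\sigma_n)$. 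By the $\sigma_n$-smoothness of $\cD_n$ the integrand is then $o(1/\sigma_n)$, and taking expectation gives the required bound.

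The one subtlety is that condition \eqref{smcond} is phrased in terms of deterministic sequences, whereas here the shift $-X_n$ is random. I would therefore first upgrade \eqref{smcond} to the uniform form that for any $\delta_n\to 0$,
\[
 \sup\bigl\{|\mu(z)-\mu(z')|:\mu\in\cD_n,\ z,z'\in\ZZ,\ |z-z'|\le\delta_n\sigma_n\bigr\} = o(1/\sigma_n).
\]
This follows from \eqref{smcond} by a standard contradiction: if the supremum were bounded below by $c/\sigma_n$ for some $c>0$ along a subsequence, one could select $\mu_n\in\cD_n$ and integer pairs $(z_n,z_n')$ nearly achieving the supremum, then realize $Y_n\sim\mu_n$ on a common probability space to obtain a sequence violating \eqref{smcond}. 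Applying this uniform bound with $\delta_n=|x_n-x_n'|/\sigma_n$ makes the estimate on the integrand hold pointwise in~$\omega$ with a common rate, so that interchanging expectation and the $o(\cdot)$ is legitimate.

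The lemma itself is really just a packaging of the conditioning trick, so I expect no serious obstacle; the only point of any delicacy is the uniformity step above, and the rest is bookkeeping followed by an invocation of Proposition~\ref{gtol}. The real work in the paper will be constructing the decomposition $A_n=X_n+Y_n$ for the actual parameters of interest, with a conditionally smooth $Y_n$ taken from an explicit family such as the binomials alluded to earlier; this lemma is then the ignition step that turns such a decomposition, together with the bivariate central limit theorem of~\cite{BR_hyp2}, into the local limit theorem of Theorem~\ref{thprob}.
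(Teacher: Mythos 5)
Your proof is correct and takes essentially the same route as the paper's: condition on $\cF_n$, observe that $\Pr(A_n=z\mid\cF_n)=\Pr(Y_n=z-X_n\mid\cF_n)$, apply the smoothness of the conditional distribution of $Y_n$ pointwise on the sample space, and take expectations before invoking Proposition~\ref{gtol}. The one thing you add is an explicit uniformity step (upgrading \eqref{smcond} to a sup over $\cD_n$ and over pairs of integers, via a subsequence argument); the paper's proof in fact relies on exactly this uniform reading when it writes $\sup_{\Omega_n}\sup_{a\in\ZZ}$ and cites \eqref{smcond}, but does not spell it out --- your version is slightly more careful, not a different argument.
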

\begin{proof}
Let $(x_n)$ and $(x_n')$ be sequences of integers with $x_n-x_n'=o(\sigma_n)$.
(We may also assume $x_n=\mu_n+O(\sigma_n)$, but do not need this assumption.)
Writing $\Omega_n$ for the probability space on which $A_n$ is defined,
by \eqref{smcond} we have
\begin{multline*}
 \sup_{\Omega_n} \bm{\Pr(A_n=x_n\mid \cF_n) - \Pr(A_n=x_n'\mid \cF_n)} \\
 \le  \sup_{\Omega_n} \sup_{a\in\ZZ} \bm{\Pr(Y_n=a\mid \cF_n) - \Pr(Y_n=a+x_n'-x_n\mid \cF_n)} = o(1/\sigma_n).
\end{multline*}
(As usual, to obtain this uniform bound we consider $a_n\in \ZZ$ and $\omega_n\in \Omega_n$
(almost) achieving
the supremum
over $a$ and $\Omega_n$ above; then we apply \eqref{smcond} with $y_n=a_n$ and $y_n'=a_n+x_n'-x_n$,
to the conditional distribution of $Y_n$ given $\cF_n$ evaluated at $\omega_n$.)
It follows that $|\Pr(A_n=x_n)-\Pr(A_n=x_n')|=o(1/\sigma_n)$,
so we may apply Proposition~\ref{gtol}.
\end{proof}

This `smooth part' technique is easiest to apply in the case of sums
of independent variables; in this setting McDonald~\cite{McDonald}, for example,
used it with each $\cD_n$ consisting of a single
binomial distribution with appropriate parameters. Similar ideas in a combinatorial
setting were used by Scott and Tateno~\cite{ScottTateno}. Behrisch, Coja-Oghlan and Kang~\cite{BC-OK1}
used it to prove the special case of Theorem~\ref{thL1only} where $\eps=\Theta(1)$, with the
$\sigma$-algebra $\cF_n$ corresponding to the first part of a multi-round exposure
of the edges of $\Hrnp$. Their particular decomposition cannot be used to prove
Theorem~\ref{thL1only}, since the variance of the relevant variable $Y_n$ is
too small when $\eps\to 0$; we return to this later.

\begin{remark}
A variant of the method above is to replace the condition~\eqref{smcond} by the stronger condition
$\Pr(Y_n=y_n+1)=\Pr(Y_n=y_n)+O(1/\sigma_n^2)$, as in Davis and McDonald~\cite{DMcD}, for example.
In situations where $Y_n$ has a simple
distribution, this condition may be just as easy to verify as \eqref{smcond}; applying
it leads to a slightly simpler argument overall. In more complicated situations,
including those where the decomposition $X_n+Y_n$ in Lemma~\ref{ss} holds only most of the time,
rather than always, it is likely to be better to consider probabilities
of values $o(\sigma_n)$ apart, as above. Then the error bounds needed in the estimates
of the point probabilities are looser; this is vital in our argument in Section~\ref{sec_main}, for example.
\end{remark}

As a simple warm-up for our main result, let us outline how Lemma~\ref{ss} may be
applied to the variable $A_n=L_1(G_n)$, where $G_n=\Htnp=G(n,p)$ is the standard
Erd\H{o}s--R\'enyi (binomial) random graph with $p=p(n)=\la/n$
with $\la>1$ constant. Since the result here is not new, and our aim
is to illustrate in a simple setting some of the ideas we shall use later,
we shall assume the following fact without proof.
Recall that the \emph{$2$-core}, or simply \emph{core}, $C(G)$
of a graph $G$, introduced in~\cite{BBkcore}, is the maximal subgraph with minimum degree at least~2.

\begin{proposition}\label{factone}
Let $\la>1$ be constant. There is a constant $c=c(\la)>0$ such that
$G_n=G(n,\la/n)$ has the following properties with probability
$1-o(n^{-1/2})$: the core $C(G_n)$
of $G_n$ has a unique component $\cC_1$ with at least $cn$ vertices, and
$\cC_1$ is a subgraph of the largest component of $G_n$;
furthermore, $G_n$ has at least $cn$ isolated vertices.\noproof
\end{proposition}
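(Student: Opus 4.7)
My plan is to establish each of the three assertions of Proposition~\ref{factone} separately, each with failure probability $o(n^{-1/2})$, and take a union bound, choosing at the end a single small constant $c=c(\la)>0$ that simultaneously works for all three. Conveniently, every concentration claim needed is either a Chebyshev estimate for a quantity with variance $O(n)$ and mean $\Theta(n)$ (giving tail $O(1/n)$), or a linear-deviation tail for an exploration/peeling observable with bounded increments (giving tail $e^{-\Omega(n)}$); both are comfortably $o(n^{-1/2})$.

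For the isolated-vertex count $I_n$, a direct second-factorial-moment computation gives $\E I_n = (1+o(1))ne^{-\la}$ and $\Var(I_n)=O(n)$, so Chebyshev yields $\Pr(I_n\le\E I_n/2)=O(1/n)$, which is enough for any $c\le e^{-\la}/3$. For the global component structure of $G_n$, I would invoke the classical BFS/DFS exploration of $G(n,\la/n)$: coupling the exploration with subcritical and supercritical Poisson branching processes, combined with Azuma--Hoeffding applied to the walk driving it, gives $|\cL_1|=(\rho_\la+o(1))n$ and every other component of size $O(\log n)$, with failure probability $e^{-\Omega(n)}$; in particular, with this probability $G_n$ has a unique component of size exceeding $cn$ for any $c<\rho_\la/2$. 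For the $2$-core I would use the standard peeling process (iteratively delete vertices of current degree at most $1$) together with its classical analysis, either via Wormald's differential-equation method or via the branching-process characterization of core-membership as `at least two neighbor subtrees survive forever'; for $\la>1$ this produces an explicit $\alpha=\alpha(\la)>0$ with $|V(C(G_n))|=(\alpha+o(1))n$, again with exponential tail.

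The last structural step is the simple observation that the $2$-core of a connected graph is itself connected or empty, because removing a vertex of degree at most~$1$ never disconnects the current subgraph. Consequently, on the intersection of the three good events above, $C(G_n)$ decomposes as $C(\cL_1)$, a single connected subgraph of $\cL_1$ of size $(\alpha+o(1))n$, together with the $2$-cores of at most a few $O(\log n)$-sized non-giant components of $G_n$, all of which are tiny. Setting $\cC_1 := C(\cL_1)$ and $c := \tfrac13\min\{e^{-\la},\alpha(\la)\}$ then delivers all three clauses at once. The only point that genuinely deserves checking is that the quoted tails really are $o(n^{-1/2})$ rather than merely $o(1)$, but this is automatic: Azuma applied to an edge-exposure martingale (after truncating low-probability high-degree events) gives $e^{-\Omega(n)}$ for both the BFS exploration and the peeling process, so no new idea is required, in keeping with the fact that the authors invoke this only as a standard warm-up.
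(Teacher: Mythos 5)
The paper does not prove this proposition: the authors explicitly ``assume the following fact without proof'' since it is classical, so there is nothing to compare against and the question is whether your outline is itself correct. It is essentially the right argument, and the structural observation that the $2$-core of a connected graph is connected or empty (so $C(G_n)$ splits as the disjoint union of $C(\cC_i)$ over components $\cC_i$ of $G_n$) is exactly the key step.

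Two of your sentences are overconfident, though. First, the claim that ``Azuma applied to an edge-exposure martingale (after truncating low-probability high-degree events) gives $e^{-\Omega(n)}$ for \ldots the peeling process'' is false as a justification for concentration of the $2$-core size: that quantity is not an edge-Lipschitz functional of $G_n$ (adding a single edge that closes a Hamilton path into a Hamilton cycle takes the core from $0$ to all $n$ vertices), and truncating high degrees does not repair this. What does work, and what you also mention, is the martingale that tracks the peeling process step by step -- the engine of Wormald's differential-equation method -- which has genuinely bounded per-step increments and a deterministic horizon, so Azuma applies there. Second, the deduction from ``$|C(G_n)|=(\alpha+o(1))n$, and every non-giant component has $O(\log n)$ vertices'' to ``$C(\cL_1)$ has $(\alpha+o(1))n$ vertices'' is not automatic: $G_n$ typically has $\Theta(n)$ non-giant components, so bounding each of their cores individually by $O(\log n)$ does not control the \emph{total} core mass outside $\cL_1$. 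You need the separate fact that this total is $o(n)$ with failure probability $o(n^{-1/2})$. The standard route is via duality: conditional on $\cL_1$, the rest of $G_n$ behaves like a subcritical random graph, in which the number of vertices lying in non-tree components has mean and variance $O(1)$, so Chebyshev already gives a tail $O(n^{-2/3})$ against a threshold of $n^{1/3}$, which is more than enough. With these two repairs your proof goes through, and the final choice $c = \tfrac13\min\{e^{-\la},\,\alpha(\la),\,\rho_\la\}$ serves all three clauses simultaneously.
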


Here then is our illustration
of smoothing for the Erd\H os--R\'enyi model, in the simple case
of constant branching factor.
In this case the central limit theorem
was established by Pittel and Wormald~\cite{PWio} and the local one
by Luczak and \L uczak~\cite{LuczakLuczak};
our aim here is to show how one can deduce one 
from the other.

\begin{theorem}\label{th_eg}
Let $p=p(n)=\la/n$ where $\la>1$ is constant, set $G_n=G(n,p)$
and let $A_n=L_1(G_n)$. If $(A_n)$ satisfies a global limit theorem
with $\sigma_n=\Theta(\sqrt{n})$ then it satisfies the corresponding
local limit theorem.
\end{theorem}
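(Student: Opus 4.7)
My plan is to apply Lemma~\ref{ss} to $A_n=L_1(G_n)$ with $\sigma_n=\sqrt n$ via a two-round exposure of $G_n$. Fix $\gamma\in(0,\lambda-1)$, set $p_1=(\lambda-\gamma)/n$ and choose $q$ with $(1-p_1)(1-q)=1-p$, so $q=(\gamma/n)(1+o(1))$; then $G_n$ has the same distribution as $G_n^{(1)}\cup G_n^{(2)}$ where $G_n^{(1)}\sim G(n,p_1)$ and $G_n^{(2)}$ includes each non-edge of $G_n^{(1)}$ independently with probability $q$. Since $\lambda-\gamma>1$, Proposition~\ref{factone} applied to $G_n^{(1)}$ yields, with probability $1-o(n^{-1/2})$, a unique large core component $\cC_1^{(1)}$ of size at least $c_0 n$ sitting inside the giant of $G_n^{(1)}$, together with a set $I_n$ of at least $c_0 n$ isolated vertices of $G_n^{(1)}$; call this event $\cE_n$. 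Since $\Pr(\cE_n^c)=o(1/\sigma_n)$, the bad event contributes only an absorbable error to every probability of interest and can be dropped throughout.

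Let $\cF_n$ be the $\sigma$-algebra generated by $G_n^{(1)}$ together with all edges of $G_n^{(2)}$ that are \emph{not} of the form $\{v,u\}$ with $v\in I_n$ and $u\in\cC_1^{(1)}$, and write $H_n$ for the corresponding $\cF_n$-measurable graph. Conditional on $\cF_n$ (on $\cE_n$) the only remaining randomness is an independent $\mathrm{Bernoulli}(q)$ for each potential edge in $I_n\times\cC_1^{(1)}$. Let $\cM_n$ be the component of $\cC_1^{(1)}$ in $H_n$, so $|\cM_n|$ is $\cF_n$-measurable and at least $c_0 n$, and let $\mathcal{Y}$ be the collection of components $C\neq\cM_n$ of $H_n$ that meet $I_n$. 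Using the standard fact that all non-giant components of $G(n,\lambda/n)$ have $O(\log n)$ vertices whp, we may further assume that the giant component of $G_n$ is exactly the one containing $\cC_1^{(1)}$, so on $\cE_n$ (modulo a further $o(1/\sigma_n)$ error)
\begin{equation*}
 L_1(G_n) = |\cM_n| + \sum_{C\in\mathcal{Y}}|C|\,\ind{E_C},
\end{equation*}
where $E_C$ is the event that some vertex of $C\cap I_n$ is joined to $\cC_1^{(1)}$ by a $G_n^{(2)}$-edge. Because the sets $C\cap I_n$ are pairwise disjoint, the events $E_C$ are conditionally independent given $\cF_n$.

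To produce the smoothness required by Lemma~\ref{ss} I would extract a macroscopic binomial sub-sum from $Y_n:=\sum_{C\in\mathcal{Y}}|C|\,\ind{E_C}$. Let $\mathcal{Y}_1\subseteq\mathcal{Y}$ consist of the singleton components $\{v\}$ with $v\in I_n$: such a $v\in I_n$ lies in $\mathcal{Y}_1$ iff it has no $G_n^{(2)}$-edge to $[n]\setminus(\{v\}\cup\cC_1^{(1)})$, an event of probability $(1-q)^{n-1-|\cC_1^{(1)}|}\geq e^{-\gamma\lambda}/2$ on $\cE_n$, so a Chernoff bound gives $N_n:=|\mathcal{Y}_1|\geq c_1 n$ whp. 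Conditional on $\cF_n$, $Y_n^{\mathrm{sing}}:=\sum_{C\in\mathcal{Y}_1}\ind{E_C}$ has distribution $\mathrm{Bin}(N_n,\pi)$ with $\pi=1-(1-q)^{|\cC_1^{(1)}|}$ lying in a fixed compact subinterval of $(0,1)$ on $\cE_n$, hence variance at least $c_2 n$. By the observation in the text that binomials with variance $\geq c\sigma_n^2$ form a $\sigma_n$-smooth family, and since $Y_n$ equals $Y_n^{\mathrm{sing}}$ plus the conditionally independent integer-valued term $Y_n-Y_n^{\mathrm{sing}}$, the conditional distribution of $Y_n$ satisfies \eqref{smcond} with $\sigma_n=\sqrt n$; setting $X_n=|\cM_n|$ and invoking (the effective version of) Lemma~\ref{ss} finishes the proof. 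The main obstacle I anticipate is exactly that $Y_n$ is not literally a binomial -- vertices of $I_n$ get absorbed into components of $H_n$ of various sizes by the ambient second-round edges -- and this singleton-extraction trick is the key device, since a binomial with linear variance plus an independent integer-valued summand is still smooth in the sense of \eqref{smcond}.
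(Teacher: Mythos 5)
Your proposal is correct, but it takes a genuinely different route from the paper's proof. The paper defines the deterministic \emph{reduced graph} $G_n^-$ by stripping all pendant edges attached to the core $C(G_n)$, takes $\cF_n=\sigma(G_n^-)$, and observes that, conditional on $G_n^-$, the isolated vertices of $G_n^-$ are reattached to a uniform core vertex independently with probability $p|C(G_n^-)|/(p|C(G_n^-)|+1-p)$; since each reattached vertex contributes exactly one vertex to the giant, the smoothing variable is \emph{directly} a binomial with $\Theta(n)$ variance, and Lemma~\ref{ss} applies with no further device. You instead use a two-round edge exposure $G_n=G_n^{(1)}\cup G_n^{(2)}$ and reveal everything except the $G_n^{(2)}$-edges between $I_n$ and $\cC_1^{(1)}$, which is precisely the BCK-style multi-round exposure that the paper itself uses in Section~\ref{sec_mr} to smooth $N_1$ and attributes to~\cite{BC-OK1} for $L_1$. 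The price of that choice is that the components absorbed into the giant have variable sizes, so $Y_n=\sum_{C\in\cY}|C|\ind{E_C}$ is not binomial; your singleton-extraction trick is the right fix, and the convolution observation (binomial of linear variance plus an independent integer summand remains $\sigma_n$-smooth) correctly closes the gap. Two things worth tightening: the ``Chernoff bound'' for $N_n\geq c_1n$ is not a bound on a sum of independent indicators (pairs $v,w\in I_n$ share the edge $\{v,w\}$), so you should invoke a second-moment or bounded-difference argument, and you should note explicitly that all your bad events have probability $o(n^{-1/2})$, not merely $o(1)$, since that is the error scale Proposition~\ref{gtol} demands. Finally, it is worth being aware of why the paper chose the other decomposition for this warm-up: both the pendant-removal and the two-round-exposure decompositions yield only $\Theta(\eps^2 n)$ of smoothing variance when $\eps\to 0$, but the pendant-removal picture generalizes cleanly to ``detach larger trees,'' which is exactly the modification carried out in Sections~\ref{sec_dist}--\ref{sec_main}; your exposure-based argument is self-contained and correct here, but would not serve as the same kind of template for the main theorem.
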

\begin{proof}
Given any graph $G$, let $G^-$ be the \emph{reduced graph}
obtained from $G$ by deleting all pendent edges incident
with the core $C(G)$ of $G$.
In other words, $G^-$ is the spanning
subgraph of $G$ obtained by deleting those edges
$e=vw$ in which $v$ has degree 1 and $w$ is in $C(G)$.
Note that $G$ and $G^-$ have the same core.
It follows that if $H$ is any graph that can arise as $G^-$ for some $G$,
then a graph $G$ with $V(G)=V(H)$ has $G^-=H$ if and only if $G$
is formed from $H$ in the following way:
for each isolated vertex $v$ of $H$, either
do nothing, or add an edge from $v$ to some vertex $w$ of the core $C(H)$ of $H$.
Since the probability of a graph $G$ in the model $G(n,p)$
is proportional to $(p/(1-p))^{e(G)}$, it follows that for any graph $H$
whose core $C(H)$ has $m$ vertices, 
the conditional distribution of $G_n=G(n,p)$ given that $G_n^-=H$
may be described as follows:

\smallskip
for each isolated vertex $v$ of $H$, with probability $pm/(pm+1-p)$ pick
a uniformly random vertex $w$ of $C(H)$ and join $v$ to $w$; otherwise do nothing. The
decisions associated to different $v$ are independent.

\smallskip
Let $\cF_n$ be the $\sigma$-algebra generated by the random variable $G_n^-$,
let $X_n$ be the number of vertices in the component of $H=G_n^-$ containing
the largest component $\cC_1$ of its core (chosen according to any
fixed rule if there is a tie), and let $Y_n$
be the number of vertices `rejoined' \emph{to this component $\cC_1$} when constructing
$G_n$ from $G_n^-$ as above.
Let $A_n'=X_n+Y_n$, noting that whenever $\cC_1$
is a subgraph of the largest component of $G_n$, we have $A_n'=L_1(G_n)$.
Clearly, $X_n$ is $\cF_n$-measurable.
Moreover, from the independence over vertices $v$,
the conditional distribution of $Y_n$ given $\cF_n$ is the binomial 
distribution $\Bi(i(G_n^-),\pi)$ where $i(H)$ denotes the number
of isolated vertices of a graph $H$ and
$\pi=\pi(G_n^-)=p|\cC_1|/(p|C(G_n^-)|+1-p)$.

Let $c>0$ be the constant appearing in Proposition~\ref{factone}.
Let $E_n$ be the event that the core $C(G_n)=C(G_n^-)$ has a unique component
with at least $cn$ vertices, and that $i(G_n^-)\ge cn$.
Note that $E_n\in \cF_n$. Also, since $i(G_n^-)\ge i(G_n)$,
by Proposition~\ref{factone} we have $\Pr(E_n)=1-o(n^{-1/2})$.
Whenever $E_n$ holds we have $c\le p|\cC_1|\le p|C(G_n^-)|=O(1)$ so, since $1-p\sim 1$,
the probability $\pi$ is bounded away from $0$ and $1$.
Hence, since $i(G_n^-)\ge cn$, the variance $i(G_n^-)\pi(1-\pi)$
of the (binomial) conditional distribution of $Y_n$ is at least $an$ for some constant $a>0$.
Letting $\cD_n$ be the family of all binomial distributions 
with variance at least $an$, then whenever $E_n$ holds,
the conditional distribution of $Y_n$ given $\cF_n$ is in $\cD_n$.
As noted above, the sequence $(\cD_n)$ is $\sqrt{n}$-smooth.

Recall that $A_n'=X_n+Y_n$ is the number of vertices
in the component of $G_n$ containing the largest component $\cC_1$ of $C(G_n)=C(G_n^-)$ (chosen according
to any fixed rule if there is a tie) so, by Proposition~\ref{factone}, $A_n'=L_1(G_n)$ with probability
at least $1-o(n^{-1/2})$.
Since $E_n$ holds whp, the conditional
distribution of $A_n'$ given $E_n$ satisfies the same global limit
theorem as the unconditional distribution of $A_n=L_1(G_n)$ does;
let $\mu_n$ and $\sigma_n=\Theta(\sqrt{n})$ be the parameters of this global limit theorem, and $\phi$ the
associated limiting density function.
Having conditioned on $E_n$, we now apply Lemma~\ref{ss}, which involves conditioning further on $\cF_n$
and using the fact that $(\cD_n)$ is $\sqrt{n}$-smooth.\footnote{To spell this out,
let $(\Omega_n,\Pr_n)$ be the (finite) probability space on which $G_n$ is defined, and let
$\Q_n$ be the probability measure $\Pr_n(\cdot\mid E_n)$ on $\Omega_n$. We apply Lemma~\ref{ss}
to the sequence of probability spaces $(\Omega_n,\Q_n)$, on which the random variables $A_n'$
satisfy the required global limit theorem. Since $E_n\in\cF_n$, then
when $\omega\in E_n$ we have $\Q_n(\cdot\mid \cF_n)(\omega)=\Pr_n(\cdot\mid \cF_n)(\omega)$ 
(by the tower-law). So, working on $(\Omega_n,\Q_n)$, when $\omega\in E_n$
the conditional distribution of $Y_n$ given $\cF_n$
is in $\cD_n$; what happens when $\omega\notin E_n$ is irrelevant since $\Q_n(E_n^\cc)=0$.
Hence Lemma~\ref{ss} gives an asymptotic formula for $\Q_n(A_n'=x_n)=\Pr_n(A_n'=x_n\mid E_n)$.}
We obtain the result that for any $x_n$ satisfying $x_n-\mu_n=O(\sqrt{n})$ we have
\[
 \Pr(A_n'=x_n\mid E_n) = \frac{\phi\bb{(x_n-\mu_n)/\sigma_n}}{\sigma_n} + o(n^{-1/2}).
\]
Since $\Pr(E_n)=1-o(n^{-1/2})$ and $\Pr(A_n'\ne A_n)=o(n^{-1/2})$ we have
$\Pr(A_n=x_n)=\Pr(A_n'=x_n\mid E_n)+o(n^{-1/2})$, giving the result.
\end{proof}

\subsection{Smoothing in the proof of Theorem~\ref{thprob}}\label{ss_complications}

In the rest of the paper we shall use a version of the above
technique to prove Theorem~\ref{thprob}. Since this proof is rather
long, and on reading (or writing!) it for the first time one might wonder
why it is so complicated, in this section we outline some of the problems
that occur when adapting the proof of Theorem~\ref{th_eg}.
Some of these concern the transition from graphs to hypergraphs,
some arise when allowing $\eps\to 0$, and some concern the extension
to a bivariate result. It is allowing $\eps\to 0$ that turns out to
cause by far the most difficulty. (Recall that
$p=\la(n) (r-2)!n^{-r+1}$ where $\la(n)=1+\eps(n)$ is the `branching factor'.)

Firstly, it turns out that (in both the graph and hypergraph cases)
the number of vertices of degree $1$ joined
directly to the core is $\Theta(\eps^2 n)$.  This means that the
variance obtained by deleting and reattaching such vertices will be
$\Theta(\eps^2 n)$, which is much smaller than the variance
$\Theta(n/\eps)$ of $L_1=L_1(\Hrnp)$ when $\eps\to 0$.  For this reason we need
to remove and reattach larger trees; indeed, it turns out that we need
to consider trees up to size $\Theta(\eps^{-2}$), which is essentially
the largest size that appears. (The bulk of the variance comes from
the large trees.) This complicates things, since each
tree contributes a different number of vertices to the giant
component. 

Secondly, there are various `good events' $E$ that we need to hold
for various parts of our smoothing argument.
As in the simple example above, one is that the core is
not too much smaller than it should be, and another is that the largest component
of the core is contained in the largest component of the whole
graph. Some of the bad events $E^\cc$ turn out to have probability
$\exp(-\Theta(\eps^3 n))$ (since the core is really characterized by
the kernel, which has $\Theta(\eps^3 n)$ vertices). So if $\eps^3n\to\infty$ slowly,
the unconditional probabilities of these events may be much larger than the probabilities
such as $\Pr(L_1=x_n)=\Theta(\sqrt{\eps/n})$ that we wish to estimate. The solution
is to show that $\Pr(E\mid L_1=x_n)=1-o(1)$, so
$\Pr(L_1=x_n) \sim \Pr(\{L_1=x_n\} \cap E)$. Then we can effectively
condition on $E$ (though being careful to keep independence where it is needed).

Thirdly, unlike for graphs, in the hypergraph case, even the simple
operation of deleting all `pendant edges' attached to the core (i.e.,
hyperedges with one vertex in the core and the other vertices
in no other hyperedges) is not so simple to invert. The inverse
involves selecting \emph{disjoint} sets of $r-1$ isolated vertices
to rejoin to the core. The condition that the sets must be disjoint
means that the number that do rejoin no longer has a binomial distribution.
We deal with this by randomly \emph{`marking'} some vertices throughout
the graph. Roughly speaking, we detach pendant edges attached either
to the core or to marked vertices, meaning that we remember
that a certain $(r-1)$-tuple was attached either to the core
or to a marked vertex. Then all choices of \emph{where} to reattach
the tuples do turn out to be independent. Of course, we actually detach larger
trees, not just pendant edges. In fact, rather than consider individual
trees, we shall directly study the
forests attached to the core and to a suitable set of marked vertices.

Finally, for the bivariate result we need to show that the nullity $N_1$
of the largest component also has a smooth
distribution; for this we use the same basic smoothing technique applied in a
different (and much simpler) way than for $L_1$. Fortunately, since
our smoothing argument for $L_1$ involves operations on the hypergraph
that do not affect $N_1$, these two
separate smoothing arguments combine to give the \emph{joint} smoothness
of $L_1$ and $N_1$ needed to prove Theorem~\ref{thprob}.

One might wonder whether our approach is really easier than (or indeed
different from) proving a local limit theorem directly.  Whether or not it is easier, the
fact remains that the local limit theorem was previously only known for restricted
ranges of the key parameter $\eps(n)$.  As to whether the approaches are genuinely
different, we believe that the answer is `yes'.  A key observation is that we study only
\emph{part} of the variation in the size of the giant component. The general method means
that, writing $\sigma_n^2$ for the variance of the quantity ($L_1$ or $N_1$)
we are studying, our `smoothing distribution' needs variance $\Theta(\sigma_n^2)$, but it can be an
arbitrarily small constant times $\sigma_n^2$.  This is vital since it means that in many of
our estimates we have a constant
factor elbow room.  This is unlikely to be the case in any direct proof of the local limit theorem,
since it would
lead to a significant error in the variance of $L_1$ or $N_1$. Here the variances of $L_1$
and $N_1$ are part
of the \emph{input} (the global limit assumption), and we really are establishing only
smoothness, rather than reevaluating the whole distribution.

\subsection{Organization of the rest of the paper}\label{ss_org}

The rest of the paper is organized as follows.
In Section~\ref{sec_key} we state two results from~\cite{BR_hyp2} that we shall need;
one of these is the global (central) limit theorem corresponding to Theorem~\ref{thprob}.
Then we state two key intermediate results, Theorems~\ref{thexcess} and~\ref{thL1}.
The first establishes smoothness of $N_1$, showing (a little more than) that
nearby values have almost equal probabilities. The second establishes (essentially) smoothness of the
distribution of $L_1$ conditional on $N_1$; as we note in the next section, these
results easily imply Theorem~\ref{thprob}.

In Section~\ref{sec_mr} we prove Theorem~\ref{thexcess}, using multi-round exposure arguments
reminiscent of those used by Behrisch, Coja-Oghlan and Kang~\cite{BC-OK2a}. In the subsequent
sections we prepare the ground for the (much more complicated) proof of Theorem~\ref{thL1}.
First, in Section~\ref{sec_tc} we present a result of Selivanov~\cite{Selivanov} enumerating hypergraph forests
subject to certain constraints, and a simple consequence concerning random forests. Then, in
Section~\ref{sec_dist}, we use Selivanov's formula to show that a certain distribution
associated to detaching and reattaching forests from the core and `marked' vertices
is $\sqrt{n/\eps}$-smooth as defined earlier in this section, so it can play
the role of $Y_n$ above when studying the distribution of $L_1$.
Next, in Section~\ref{sec_dual}, we
state a precise form of the supercritical/subcritical duality result for the random hypergraph $\Hrnp$;
in Section~\ref{sec_sub} we use this to establish some properties of the `small' components of $\Hrnp$
that we shall need later. In Section~\ref{sec_ext} we formally define `marked vertices'
and the extended core of
$\Hrnp$, and show that with high conditional probability it has the properties we need.
After this preparation, in Section~\ref{sec_main} we prove Theorem~\ref{thprob};
in Section~\ref{sec_deduce} we show that Theorem~\ref{thenum} follows.
Finally, in the Appendix we give detailed calculations comparing our formulae
with those in~\cite{BC-OK2pre,BC-OK2abs,BC-OK2b,BCMcK,KL_sparse,SatoWormald}.

\section{The key ingredients}\label{sec_key}

In this section we state two results from~\cite{BR_hyp2} that we shall need
as `inputs' to our smoothing arguments. Then we state our two main intermediate
results, and show how they combine to give Theorem~\ref{thprob}.

\subsection{Inputs}

Building on methods we used in~\cite{BR_hyp} to prove the central limit
theorem for $L_1=L_1(\Hrnp)$,
in~\cite{BR_hyp2} we proved the following bivariate (global) central limit theorem
for the order $L_1$ and nullity $N_1$ of the largest component of $\Hrnp$.
Here, and throughout, $\rho_{r,\la}$ and $\rho_{r,\la}^*$ are as defined in \eqref{rkldef}
and \eqref{rhosdef}.

\begin{theorem}\label{thglobal2}
Let $r\ge 2$ be fixed, and let $p=p(n)=(1+\eps)(r-2)!n^{-r+1}$ where $\eps=\eps(n)\to 0$
and $\eps^3n\to\infty$. Let $L_1$ and $N_1$ be the order and nullity of the largest
component $\cL_1$ of $\Hrnp$. Then
\[
 \left( \frac{L_1-\rho_{r,\la}n}{\sqrt{2n/\eps}} ,
  \frac{N_1-\rho_{r,\la}^*n}{\sqrt{10/3}(r-1)^{-1}\sqrt{\eps^3 n}} \right)
\dto (Z_1,Z_2),
\]
where $\dto$ denotes convergence in distribution, and $(Z_1,Z_2)$
has a bivariate Gaussian distribution
with mean~$0$, $\Var[Z_1]=\Var[Z_2]=1$ and $\Covar[Z_1,Z_2]=\sqrt{3/5}$.\noproof
\end{theorem}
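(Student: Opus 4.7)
The plan is to extend the martingale proof of the univariate CLT for $L_1$ given in \cite{BR_hyp} to a joint CLT for $(L_1, N_1)$. Since $(r-1)M_1 = L_1 + N_1 - 1$ by the definition of nullity, this is equivalent to a joint CLT for $(L_1, M_1)$, where $M_1 = e(\cL_1)$.

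I would analyze $\Hrnp$ via a breadth-first exploration that processes one vertex per step. Let $X_t$ be the number of active vertices and $M_t$ the number of hyperedges discovered after $t$ steps, with $U_t = n - X_t - t$ untested vertices. Conditional on the past, the numbers of new edges and new active vertices per step are sums of near-independent binomial variables determined by $U_t$, with drifts $\la (U_t/n)^{r-1} - 1$ for $X_t$ and $(\la/(r-1))(U_t/n)^{r-1}$ for $M_t$ up to lower-order terms. The deterministic trajectories $(x(\tau), \mu(\tau))$ obtained from these drifts satisfy ODEs whose first non-trivial zero of $x$ occurs at $\tau = \rho_{r,\la}$, matching $L_1 \sim \rho_{r,\la} n$. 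Applying a martingale CLT (Freedman- or Lindeberg-style) to the increment martingales for $(X_t, M_t)$ and computing their conditional covariances to leading order gives a bivariate Gaussian approximation for the joint fluctuations of $(X_t, M_t)$ around their trajectories, uniformly in $t$ up to $\rho_{r,\la} n + O(\sqrt{n/\eps})$.

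The CLT for $(L_1, M_1)$ then follows from the stopping-time identity at the first return of $X_t$ to zero: linearization gives $L_1 - \rho_{r,\la}n \approx -X_{\rho_{r,\la}n}/x'(\rho_{r,\la})$ and $M_1 - \mu(\rho_{r,\la})n \approx (M_{\rho_{r,\la}n} - \mu(\rho_{r,\la})n) + \mu'(\rho_{r,\la})(L_1 - \rho_{r,\la}n)$, with the slope $x'(\rho_{r,\la}) = \Theta(\eps)$ accounting for the $\sqrt{n/\eps}$ scale of $L_1$. After translating back to $(L_1, N_1)$ via $N_1 = (r-1)M_1 - L_1 + 1$, the stated variances $2n/\eps$ and $(10/3)(r-1)^{-2}\eps^3 n$, together with the correlation $\sqrt{3/5}$, should emerge from the resulting $2 \times 2$ covariance matrix.

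The main obstacle is working to the level of precision required for the $N_1$ component. Its fluctuations live on the scale $\sqrt{\eps^3 n}$, smaller by a factor of $\eps^2$ than those of $L_1$, and they appear only as the residue of a cancellation between $(r-1)M_1$ and $L_1$. Capturing this cancellation demands that the conditional covariance of the increment martingale be computed with errors $o(\sqrt{\eps^3 n})$, and that the linearization of the stopping time be justified to comparable precision. In particular, the quadratic correction to the expansion of $X_t$ near its second zero must be handled carefully, since it is what distinguishes the genuine covariance $\sqrt{3/5}$ from a perfect correlation between $L_1$ and $N_1$; likewise the replacement of the hypergeometric increments of $M_t$ by Gaussians must be controlled on this finer scale. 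Throughout, one also needs that the second-largest excursion of $X_t$ contributes negligibly, so that the stopping-time identity genuinely recovers the giant rather than some other component.
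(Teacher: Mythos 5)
The paper does not prove Theorem~\ref{thglobal2}: it is imported verbatim from \cite{BR_hyp2} and stated without proof, serving as the ``global'' input to the smoothing machinery developed here. Your sketch --- a breadth-first exploration tracking $(X_t,M_t)$, a bivariate martingale CLT for the increments, and a linearization at the stopping time where $X_t$ first returns to zero --- matches the methodology the paper explicitly attributes to \cite{BR_hyp2} (``random walk and martingale arguments,'' building on the univariate argument of \cite{BR_hyp}), so you are following essentially the same route as the cited source. That said, you have named rather than resolved the genuine difficulty: since $N_1=(r-1)M_1-L_1+1$ lives on the scale $\sqrt{\eps^3 n}$, smaller by a factor of $\eps^2$ than the individual fluctuations of $L_1$ and $M_1$, the conditional covariance estimates, the Lindeberg control, and the stopping-time linearization (including its quadratic correction near the second zero of the trajectory) must all be carried out with relative precision $o(\eps^2)$ --- this near-cancellation is where essentially all the work in \cite{BR_hyp2} lies, and your proposal records the obstacle without showing how to get past it, so as written it remains a plan rather than a proof.
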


In particular, recalling \eqref{newrasympt}, $L_1$ is asymptotically Gaussian with mean
$\Theta(\eps n)$ and variance $\Theta(n/\eps)$,
and $N_1$ is asymptotically Gaussian with mean $\Theta(\eps^3n)$ and variance $\Theta(\eps^3 n$).

In Section~\ref{sec_mr} we shall need the following large-deviation bounds on $L_1$
and $L_2$, the order of the second largest component of $\Hrnp$; this result is
also proved in~\cite{BR_hyp2}.

\begin{theorem}\label{thsupertail}
Let $r\ge 2$ be fixed, and let $p=p(n)=(1+\eps)(r-2)!n^{-r+1}$ where $\eps=O(1)$
and $\eps^3n\to\infty$. If $\omega=\omega(n)\to\infty$ and 
$\omega=O(\sqrt{\eps^3n})$ then
\begin{equation}\label{L1conc}
 \Pr\Bb{ |L_1(\Hrnp)-\rho_{r,\la} n| \ge \omega\sqrt{n/\eps} } = \exp(-\Omega(\omega^2)).
\end{equation}
Moreover, if $L=L(n)$ satisfies $\eps^2L\to\infty$ and $L=O(\eps n)$, then
\[
 \Pr( L_2(\Hrnp) > L ) \le C \frac{\eps n}{L} \exp(-c \eps^2L),
\]
for some constants $c,C>0$.%
\noproof
\end{theorem}
Here, as usual, the constants $c,C$ and the implicit constant in the $\Omega$ notation in \eqref{L1conc}
are allowed to depend on all previous choices: on $r$, the function $p(n)$, and the functions $\omega(n)$
and $L(n)$; see Remark~\ref{rem:asy1}.

\subsection{Main steps}

Theorem~\ref{thprob} is the bivariate local limit version of Theorem~\ref{thglobal2}.
To deduce it from Theorem~\ref{thglobal2}, we must show that `nearby' potential
values of the pair $(L_1,N_1)$ have essentially the same probability.
(Recalling \eqref{nulldef}, for $(s,t)$ to be a potential value, $r-1$ must divide $s+t-1$.)
We proceed in two stages. In the first, we show that $N_1$ has a smooth distribution,
which will already allow us to prove Theorem~\ref{thN1}.
More precisely,
we shall prove the following result in Section~\ref{sec_mr}.
We consider the pair $(L_1-(r-2)N_1,N_1)$
rather than $(L_1,N_1)$ for technical reasons that will become clear
during the proof; this makes little difference, since the
standard deviation of $N_1$ is much smaller than that of~$L_1$.

\begin{theorem}\label{thexcess}
Let $r\ge 2$ be fixed, and let $p=p(n)=(1+\eps)(r-2)!n^{-r+1}$ where $\eps=\eps(n)=O(1)$
and $\eps^3n\to\infty$. For any sequences $(t_n)$ and $(t_n')$ with
$t_n,t_n'\ge 0$ and $t_n-t_n'=o(\sqrt{\eps^3n})$, and any $I_n\subset\ZZ$,
we have
\begin{multline*}
 \Pr\bb{ N_1=t_n\hbox{ and }L_1-(r-2)N_1\in I_n } - \Pr\bb{ N_1=t_n'\hbox{ and }L_1-(r-2)N_1\in I_n } \\
 = o((\eps^3n)^{-1/2}).
\end{multline*}
\end{theorem}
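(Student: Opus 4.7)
The plan is to prove Theorem~\ref{thexcess} using a two-round exposure combined with a smoothing argument in the spirit of Lemma~\ref{ss}, where the smoothing variable shifts $N_1$ by integers while leaving $L_1-(r-2)N_1$ invariant. The combination $L_1-(r-2)N_1$ is the right one because the simplest modification that increases $N_1$ by $1$ without changing $L_1-(r-2)N_1$ is the attachment of an $r$-edge whose vertex set consists of two vertices already in $\cL_1$ and $r-2$ pendant vertices that are otherwise isolated in $\Hrnp$: such an attachment raises $L_1$ by $r-2$ and $M_1$ by $1$, so $N_1=(r-1)M_1-L_1+1$ by exactly $1$, and $L_1-(r-2)N_1$ is unchanged. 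Call such an edge a \emph{clean claw}, and use the count $Y$ of clean claws in $\Hrnp$ as the smoothing variable.

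Concretely, I would write $\Hrnp=H^{(1)}\cup H^{(2)}$, with $H^{(1)}\sim\Hrnpone$ and $H^{(2)}$ independently including each previously absent edge with probability $p_2$, chosen so that $(1-p_1)(1-p_2)=1-p$; take $p_2=\alpha\min(\eps,1)\cdot p$ for a small constant $\alpha>0$. Then $H^{(1)}$ is itself supercritical at the same scale, so by Theorem~\ref{thsupertail} applied to $H^{(1)}$ (together with the duality arguments to be developed in Section~\ref{sec_dual}) with probability $1-\exp(-\Omega(\eps^3n))$ its giant component $\cL_1(H^{(1)})$ has order $\Theta(\max(\eps,1)\cdot n)$ and is contained in $\cL_1(\Hrnp)$, while the set $I(H^{(1)})$ of its isolated vertices has size $\Theta(n)$. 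Call an $r$-subset with two vertices in $\cL_1(H^{(1)})$ and the other $r-2$ in $I(H^{(1)})$ a \emph{potential claw slot}: there are $\Theta(\eps^2 n^r)$ of these, each carries an $H^{(2)}$-edge with probability $p_2$, and the ``cleanness'' condition (no other $H^{(2)}$-edge through any of the $r-2$ pendant vertices) contributes a factor bounded away from zero. A first-moment calculation then gives $\E[Y\mid H^{(1)}]=\Theta(\eps^3n)$; since two distinct clean claws must have disjoint pendant-vertex sets, a second-moment estimate yields $\Var[Y\mid H^{(1)}]=\Theta(\eps^3n)$ with $Y$ well-approximated by a Poisson variable of the same mean.

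To finish, let $\cF$ be the $\sigma$-algebra generated by $H^{(1)}$ together with the restriction of $H^{(2)}$ to edges outside the set of potential claw slots. Then $\cF$ determines the subset $V^*\subseteq I(H^{(1)})$ of clean-eligible pendant vertices, and the indicators of the remaining potential claw slots are conditionally i.i.d.\ Bernoulli$(p_2)$. Because each clean claw contributes exactly $(r-2,1,0)$ to the triple $(L_1,N_1,L_1-(r-2)N_1)$, the event $\{N_1=t_n,\,L_1-(r-2)N_1\in I_n\}$ decomposes, conditional on $\cF$, into an $\cF$-measurable location event and a point-value constraint on $Y$; the local limit theorem for binomial/Poisson variables of diverging mean then gives the required $o((\eps^3n)^{-1/2})$ bound upon replacing $t_n$ with $t_n'$. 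The hard part, as flagged in Subsection~\ref{ss_complications}, will be the rare-event bookkeeping: each of the good events above fails only with unconditional probability $\exp(-\Omega(\eps^3n))$, but the target probabilities are themselves $\Theta(1/\sqrt{\eps^3n})$, so when $\eps^3n\to\infty$ slowly we cannot simply discard the bad events and must verify that their conditional probability given $\{N_1=t_n\}$ is $o(1)$ for the relevant $t_n$.
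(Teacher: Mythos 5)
Your overall plan is the right one and closely mirrors the paper's proof: a two-round exposure $\Hrnp = H^{(1)}\cup H^{(2)}$, conditioning on a $\sigma$-algebra $\cF$ that leaves a smoothing variable $Y$ free, and using the observation that each clean pendant $r$-edge shifts $(L_1,N_1)$ by $(r-2,1)$ while leaving $L_1-(r-2)N_1$ invariant. The paper takes essentially the same route (with $p_1=(1+\eps/2)(r-2)!n^{-r+1}$ instead of your specific split, which makes no essential difference).

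However, there is a genuine gap at the heart of your conditional-distribution step. You define $\cF$ as $H^{(1)}$ together with the restriction of $H^{(2)}$ to edges \emph{outside} the set of potential claw slots, and then assert that the indicators of the remaining claw-slot edges are conditionally i.i.d.\ Bernoulli$(p_2)$ and that the clean-claw count $Y$ reduces to a point-value constraint on a binomial/Poisson variable. The first part is true, but the second is not: whether a present claw-slot edge is a \emph{clean} claw depends on whether any of its $r-2$ pendant vertices are also touched by \emph{another present claw-slot edge}, which is not $\cF$-measurable. Thus $Y$ is a nonlinear function of the i.i.d.\ indicators, not their sum, and the decomposition of $\{N_1=t_n,\,L_1-(r-2)N_1\in I_n\}$ into an $\cF$-measurable event plus a point constraint on a binomial count does not go through as stated. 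This is exactly the complication the paper flags in Subsection~\ref{ss_complications} under ``the condition that the sets must be disjoint means that the number that do rejoin no longer has a binomial distribution.'' The paper's fix is to condition additionally on the set $F$ of claw-slot edges whose pendant vertices meet some other $H_2$-edge; after that, the remaining edges $F'$ are forced to have pairwise disjoint pendant sets, any such configuration is possible, and the conditional law of $|F'|$ is $\Pr(|F'|=a)\propto \pi^a n_a$ where $n_a$ counts configurations of $a$ pairwise-disjoint slots. This law is not binomial, and its smoothness is established by a direct estimate on the ratios $q_a=p_{a+1}/p_a$ (see \eqref{qa}--\eqref{padiff}), not by Poisson approximation. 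If you insist on Poisson approximation without the extra conditioning, you would need a pointwise (e.g.\ total-variation) Chen--Stein bound that is $o((\eps^3 n)^{-1/2})$ uniformly over the values of $a$, which is not obviously attainable and is in any case a substantively different argument that would have to be carried out.

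One smaller point: you flag the rare-event bookkeeping but do not actually carry it out. The paper's resolution here is simple and worth noting: the three good events $\cE_1,\cE_2,\cE_3$ are chosen so that their failure probabilities are $o((\eps^3n)^{-1/2})$ unconditionally, and the event $\cE$ that is actually conditioned on is $\cF$-measurable, so the telescoping cancellation in the final display automatically absorbs the bad event without any need to estimate $\Pr(\text{bad}\mid N_1=t_n)$. Your formulation (``verify that their conditional probability given $\{N_1=t_n\}$ is $o(1)$'') is a harder and unnecessary route for this particular lemma; it is the approach the paper does use later for Theorem~\ref{thL1}, where the bad events are not quite as cheap.
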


By Proposition~\ref{gtol}, Theorems~\ref{thglobal2} and~\ref{thexcess} imply Theorem~\ref{thN1}.
Indeed, Theorem~\ref{thglobal2} immediately implies that $N_1=N_1(\Hrnp)$
satisfies a central limit theorem with parameters $\rho^*_{r,\la}n$ for the mean
and $\sigma_n^*=\sqrt{10/3}(r-1)^{-1}\sqrt{\eps^3n}$ for the standard deviation.
Since $\sigma_n^*=\Theta(\sqrt{\eps^3n})$,
taking $I_n=\ZZ$ in Theorem~\ref{thexcess} we see that if $t_n-t_n'=o(\sigma_n^*)$
then $\Pr(N_1=t_n)-\Pr(N_1=t_n')=o(1/\sigma_n^*)$. Hence Theorem~\ref{thN1}
follows by Proposition~\ref{gtol}.

In the next result, and much of the rest of the paper, we only consider potential
values of $L_1$ in a `typical' range. To be precise, having fixed a function $p(n)$
(and thus $\eps(n)$ and $\la(n)$) satisfying our Weak Assumption~\ref{A1},
let $\delta=\delta(n)$ satisfy
\begin{equation}\label{deltadef}
 \delta\to 0\hbox{\quad and\quad}\delta\ge (\eps^3n)^{-1/3},
\end{equation}
and let
\begin{equation}\label{rangedef}
 \range = \range_n =\range_{n,p} = [(1-\delta)\rho_{r,\la}n, (1+\delta)\rho_{r,\la}n].
\end{equation}
(To be concrete, we may just set $\delta=(\eps^3 n)^{-1/3}$, but the precise value is irrelevant
as long as the conditions above hold.)
Recalling \eqref{newrasympt} and \eqref{rlprop}, under our Weak Assumption~\ref{A1} we have
$\rho_{r,\la}=\Theta(\eps)$
and $\rho_{r,\la}$ bounded away from $1$. Hence there are constants $c,C>0$ (depending on the function
$\eps(n)$) such that, for $n$ large enough,
\begin{equation}\label{rangecont}
 \range_n \subseteq [c\eps n, C\eps n] \hbox{\quad and\quad} \range_n\subseteq [c\eps n,(1-c)n].
\end{equation}
By Theorem~\ref{thsupertail}, applied with
$\omega=\omega(n)=\delta\rho_{r,\la}n/(\sqrt{n/\eps})=\Theta(\delta\sqrt{\eps^3n})$, under
our Weak Assumption~\ref{A1} we have
\begin{equation}\label{rangewvhp}
 \Pr(L_1(\Hrnp)\notin\range) \le \exp(-c\delta^2\eps^3n)
 \le \exp(-c(\eps^3n)^{1/3}) = O(1/(\eps^3 n)).
\end{equation}

The bulk of the paper will be devoted to the proof of the following result establishing,
essentially, smoothness of the conditional distribution of $L_1$ given~$N_1$.

\begin{theorem}\label{thL1}
Let $r\ge 2$ be fixed, let $p=p(n)=(1+\eps)(r-2)!n^{-r+1}$
where $\eps=\eps(n)$ satisfies $\eps\to 0$ and $\eps^3 n\to\infty$,
and set $L_1=L_1(\Hrnp)$. Define $\range=\range_n$ as in \eqref{rangedef}.
If $(x_n)$, $(y_n)$ and $(t_n)$ are sequences of integers with
$x_n,y_n\in \range_n$, $x_n-y_n=o(\sqrt{n/\eps})$, $t_n\ge 2$, and
\[
 x_n \equiv  y_n \equiv 1-t_n \hbox{\quad modulo }(r-1),
\]
then
\[
 \Pr(L_1=x_n, N_1=t_n)-\Pr(L_1=y_n, N_1=t_n) = o(1/(\eps n)).
\]
\end{theorem}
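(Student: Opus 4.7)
The plan is to prove Theorem~\ref{thL1} by applying the smoothing framework of Lemma~\ref{ss} to $L_1$, using a random operation on $\Hrnp$ that re-randomises the way pendant hypertrees are attached to the core without changing the nullity of any component. Because such a move leaves $N_1$ invariant, joint smoothness on the event $\{L_1=x_n,N_1=t_n\}$ will reduce to smoothness of $L_1$ conditional on an appropriate $\sigma$-algebra $\cF_n$ that already determines $N_1$, at the required target precision $o(1/(\eps n))=o(1/\sigma_n)$ with $\sigma_n=\sqrt{n/\eps}$.

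To build $\cF_n$ and the `smooth part' $Y_n$, I would first mark each vertex of $\Hrnp$ independently with a small probability $\mu=\mu(n)$. The extended core $C^+$ is then the $2$-core of $\Hrnp$ together with the marked vertices, with every pendant hypertree of size up to $\Theta(\eps^{-2})$ that is attached to one of these vertices adjoined as well. Let $\cF_n$ record the `stripped' hypergraph obtained from $\Hrnp$ by detaching every such pendant hypertree from its socket in $C^+$, together with the multiset of sockets awaiting a tree. The key analytical point, to be secured via the marking, is that conditional on $\cF_n$, the law of $\Hrnp$ is that of reassigning each detached tree uniformly and independently over the sockets of $C^+$; marking frees us from the disjointness constraints on $(r-1)$-tuples that would otherwise couple the reassignments. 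Setting $Y_n$ equal to the total size of trees reattached (possibly through chains) to the giant component of $C^+$, and $X_n=L_1-Y_n$, the conditional law of $Y_n$ is a sum of independent size-weighted Bernoullis whose variance I expect to be $\Theta(n/\eps)$, dominated by trees of size near $\eps^{-2}$ — pendant edges alone would yield only $\Theta(\eps^2 n)$, far too small. Using Selivanov's formula (Sections~\ref{sec_tc}--\ref{sec_dist}) to count detached trees of each size, the conditional law of $Y_n$ should then lie in a $\sqrt{n/\eps}$-smooth family $\cD_n$.

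Since detach-and-reattach preserves the nullity of every component (trees contribute nullity zero, so that $N_1=n(\cC_1(C^+))$ is $\cF_n$-measurable), Lemma~\ref{ss} applied inside the slice $\{N_1=t_n\}$ then gives the claimed smoothness of $L_1$, provided one can feed in a global limit statement on that slice; the latter is available by combining Theorems~\ref{thglobal2} and~\ref{thexcess}. The main obstacle, I expect, is not the construction but the handling of the good events $E_n$ on which this whole picture is accurate — that $C^+$ has $\Theta(\eps n)$ vertices with a unique giant core component sitting inside $\cL_1(\Hrnp)$, that the socket count and the numbers of detached trees of each size are close to their means, and that $L_1\in\range_n$. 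These events fail with probability only $\exp(-\Omega(\eps^3n))$, which when $\eps^3n$ grows slowly can dwarf the target $o(1/(\eps n))$; the remedy foreshadowed in Subsection~\ref{ss_complications} is to show, before invoking Lemma~\ref{ss}, that $\Pr(E_n\mid L_1=x_n,N_1=t_n)=1-o(1)$ uniformly in $x_n\in\range_n$, so that the conditioning on $E_n$ may effectively be absorbed into the argument.
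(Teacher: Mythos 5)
Your high-level strategy is the same as the paper's: mark a small fraction of vertices, define an extended core whose attached forest can be stripped off and re-randomised without touching the nullity, condition on enough of $\Hrnp$ to isolate a smooth conditional distribution for $L_1$, and deal with the problematic good events by showing they hold with high probability conditional on $\{L_1=x_n,N_1=t_n\}$ rather than unconditionally. So far, so good.

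The genuine gap is in how you realise the smooth conditional distribution. You propose fixing the multiset of detached trees in $\cF_n$, viewing $Y_n$ as a sum of independent size-weighted Bernoullis, and asserting that this family is $\sqrt{n/\eps}$-smooth. Nothing in the paper (and nothing in your sketch) proves that a sum of $\Theta(\eps^3n)$ independent integer-weighted indicators, with weights heterogeneously spread up to $\Theta(\eps^{-2})$, is smooth at scale $\sqrt{n/\eps}$; the arithmetic of the weights matters, and this is the hard content of the argument, not a footnote. The paper sidesteps this entirely by \emph{not} putting the trees into $\cF$: it reveals only $\excore(H)$, the vertex set $M^+(A_1\cup A_0)$, and the edges outside, so that what remains is a \emph{uniformly random} $(A_1\cup A_0)$-rooted $r$-forest $F$ on $V=A_1\cup A_0\cup M^+(A_1\cup A_0)$. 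Crucially, $A_1\subset V(\eco)$ and $A_0\subset\cI'$ are chosen to have the \emph{same} size $a=\min\{a_0,a_1\}$; with this balanced choice the smoothing variable is simply the number of forest edges rooted in $A_1$, whose conditional law is exactly the object analysed in Lemma~\ref{ldist} (via Selivanov's formula), with the right variance $\Theta\bigl((m^{3/2}/a)^2\bigr)=\Theta(n/\eps)$. Your proposal misses this balancing device, introduces an artificial size cutoff $\Theta(\eps^{-2})$ on the detached trees that the paper's forest formulation handles automatically at all depths, and in effect replaces the concrete Lemma~\ref{ldist} with an unproven claim about weighted Bernoulli sums. You would also need a genuine argument that the conditional law really is independent uniform reassignment of fixed trees to sockets; the paper instead argues the cleaner and strictly correct statement that, given $\cF$, the forest $F$ is uniform over all $(A_1\cup A_0)$-rooted $r$-forests on $V$.

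One further subtlety the paper must handle that your sketch elides: until the forest is reattached one does not know which component is largest, so one cannot simply condition on $\{L_1=x_n,N_1=t_n\}$ inside the smoothing step. The paper introduces the component $\eco$ of highest nullity and the corresponding $(L^*,N^*)$ and event $\cB$ (definable from $\cF$), proves $\cA\cap\cU\Rightarrow\cB$, and uses Lemma~\ref{lbigep} and \eqref{Ulikely} to transfer between $(L^*,N^*)$ and $(L_1,N_1)$. Your plan to feed a global limit statement on the slice $\{N_1=t_n\}$ into Lemma~\ref{ss} does not face up to this measurability issue.
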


Theorems~\ref{thexcess} and Theorem~\ref{thL1} will be proved in Sections~\ref{sec_mr}--\ref{sec_main}.
First, let us show how they imply Theorem~\ref{thprob}. Although the argument is straightforward,
since Theorem~\ref{thL1} is our main result, we shall spell out the details.

\begin{proof}[Proof of Theorem~\ref{thprob}.]
Throughout we fix $r\ge 2$, and a function $p=p(n)=(1+\eps)(r-2)!n^{-r+1}$ such that $\eps=\eps(n)$
satisfies $\eps\to 0$ and $\eps^3n\to\infty$.
Let
\[
 \sigma_n=\sqrt{2n/\eps}\hbox{\qquad and\qquad}
  \sigma_n^*=\sqrt{10/3}(r-1)^{-1}\sqrt{\eps^3n}=\Theta(\sqrt{\eps^3n}).
\]
Indicating the dependence on $n$ for once, let $L_{1,n}=L_1(\Hrnp)$ and 
$N_{1,n}=N_1(\Hrnp)$. It will be convenient to consider
the linear combination
\[ 
 \tL_{1,n}=L_{1,n}-(r-2)N_{1,n}.
\]
Recalling the definitions \eqref{rkldef} and \eqref{rhosdef} of $\rho_{r,\la}$ and $\rho^*_{r,\la}$,
set
\[
 \trho_{r,\la}=\rho_{r,\la}-(r-2)\rho^*_{r,\la}.
\]
Since $\sigma_n^*=o(\sigma_n)$,  
Theorem~\ref{thglobal2} immediately implies that
\begin{equation}\label{ctilde}
 \left( \frac{\tL_{1,n}-\trho_{r,\la}n}{\sigma_n} ,
  \frac{N_{1,n}-\rho_{r,\la}^*n}{\sigma_n^*} \right)
\dto (Z_1,Z_2),
\end{equation}
where $(Z_1,Z_2)$
has a bivariate Gaussian distribution
with mean~$0$, $\Var[Z_1]=\Var[Z_2]=1$ and $\Covar[Z_1,Z_2]=\sqrt{3/5}$;
the probability density function $f(a,b)$ of this distribution is given in \eqref{pdf}.

Let $(x_n)$ and $(y_n)$ be sequences with $x_n=\rho_{r,\la}n+O(\sigma_n)$ (i.e., $\sup_n |x_n-\rho_{r,\la}n|/\sigma_n<\infty$)
and $y_n=\rho^*_{r,\la}n+O(\sigma^*_n)$, such that $x_n+y_n-1$ is a multiple of $r-1$ for all $n$;
our aim is to prove \eqref{pointprob}
for these sequences. By a standard subsequence argument, we may assume without
loss of generality that
\[
 \frac{x_n-\rho_{r,\la}n}{\sigma_n}\to x
\hbox{\qquad and\qquad}
  \frac{y_n-\rho_{r,\la}^*n}{\sigma^*_n}\to y
\]
for some $x,y\in \RR$.
Since the density $f(a,b)$ is continuous and strictly positive, what
we must show is exactly that
\begin{equation}\label{aimo1}
 \Pr\bb{ L_{1,n}=x_n,\, N_{1,n}=y_n } = \frac{(r-1)f(x,y)+o(1)}{\sigma_n\sigma^*_n}.
\end{equation}
(As usual, the $o(1)$ term represents a quantity that tends to $0$ as $n\to\infty$; the
rate may depend on all the choices made so far.)

It will be convenient to consider more explicit reformulations of Theorems~\ref{thexcess} and~\ref{thL1}.
By Theorem~\ref{thexcess}, for every constant $\alpha>0$ there is a constant $\beta>0$
and an integer $n_0$ such that the following holds:
whenever $n\ge n_0$, $t,t'\ge 0$ with $|t-t'|\le \beta\sigma_n^*$, and $I\subset \ZZ$, then
\begin{equation}\label{eref}
 \bigl|\Pr\bb{ N_{1,n}=t,\, \tL_{1,n}\in I } - \Pr\bb{ N_{1,n}=t',\, \tL_{1,n}\in I } \bigr| \le  \alpha/\sigma_n^*.
\end{equation}
Indeed, if \eqref{eref} does not hold, then picking an $\alpha$ for which it fails,
for each $k$ we may find an $n_k>n_{k-1}$ and $I_{n_k}$, $t_{n_k}$ and $t_{n_k}'$
such that $|t_{n_k}-t_{n_k}'|\le \sigma_n^*/k$ and
$\Pr\bb{N_{1,n_k}=t_{n_k},\, \tL_{1,n_k}\in I_{n_k}}$ and 
$\Pr\bb{N_{1,n_k}=t_{n_k}',\, \tL_{1,n_k}\in I_{n_k}}$ differ by at least $\alpha/\sigma_n^*$. 
Completing the sequences $t_n$, $t_n'$ and $I_n$ appropriately gives a counterexample
to Theorem~\ref{thexcess}.

Similarly, since $\sigma_n=\Theta(\sqrt{n/\eps})$ and $\sigma_n\sigma^*_n=\Theta(\eps n)$,
Theorem~\ref{thL1} implies that for any constant $\eta>0$
there are $\gamma_1>0$ and $n_0$ such that whenever $n\ge n_0$, $t\ge 2$ and
$s, s'\in \range_n$ with $|s-s'|\le \gamma_1\sigma_n$ and  $s\equiv s'\equiv 1-t$ modulo $r-1$, then
\begin{equation}\label{vref}
  \bigl|\Pr(L_{1,n}=s,\, N_{1,n}=t)-\Pr(L_{1,n}=s',\, N_{1,n}=t)\bigr| \le \frac{\eta}{\sigma_n\sigma^*_n}.
\end{equation}

Let $\eta>0$ be constant. We shall show that if $n$ is large enough, then
\begin{equation}\label{aimeta}
 \left| \Pr\bb{ L_{1,n}=x_n,\, N_{1,n}=y_n } - \frac{(r-1)f(x,y)}{\sigma_n\sigma^*_n} \right|
 \le \frac{4r\eta}{\sigma_n\sigma^*_n},
\end{equation}
proving \eqref{aimo1} and thus Theorem~\ref{thprob}.

Define $\gamma_1$ as in \eqref{vref}.
Since $f(\cdot,\cdot)$ is continuous at $(x,y)$, we may choose $\gamma_2>0$ such that whenever $|a-x|\le \gamma_2$
and $|b-x|\le \gamma_2$, we have $|f(a,b)-f(x,y)|\le \eta$. Set $\gamma=\min\{\gamma_1,\gamma_2\}$
and let
\[
 I_n=[\trho_{r,\la}n+(x-\gamma/2)\sigma_n,\, \trho_{r,\la}n+(x+\gamma/2)\sigma_n].
\]
For $n\ge 1$ and $t\ge 0$ let
\[
 \pi_{n,t} = \Pr\bb{ N_{1,n}=t,\, \tL_{1,n}\in I_n }.
\]
By \eqref{eref}, applied with $\alpha=\eta\gamma$, there is a constant $\beta>0$,
which we may assume to be less than $\gamma_2$, such that for all large enough $n$
we have
\begin{equation}\label{pidiff}
 | \pi_{n,t} - \pi_{n,t'} |  \le  \eta\gamma/\sigma_n^*
\end{equation}
whenever $t,t'$ lie in the interval
\[
 J_n=[\rho_{r,\la}^*n+(y-\beta/2)\sigma^*_n,\, \rho_{r,\la}^*n+(y+\beta/2)\sigma^*_n].
\]
(Here we have used the fact that for $n$ large $J_n$ consists only of positive integers,
which holds since $\sigma^*_n=o(\rho_{r,\la}^*n)$.)
Let
\[
 a_n = \frac{1}{|J_n|} \sum_{t\in J_n} \pi_{n,t} 
 = \frac{1}{|J_n|} \Pr\left( (\tL_{1,n},N_{1,n}) \in I_n\times J_n \right).
\]
Since $\sigma^*_n\to\infty$ and $\beta$ is constant, we have $|J_n|\sim \beta\sigma^*_n$.
It follows from \eqref{ctilde} that
\[
 a_n \sim \frac{1}{\beta\sigma^*_n} 
   \int_{a=x-\gamma/2}^{x+\gamma/2} \int_{b=y-\beta/2}^{y+\beta/2} f(a,b) \,\da\,\db.
\]
Since $\beta$ and $\gamma$ are at most $\gamma_2$, for all $(a,b)$ in the
region of area $\beta\gamma$ over which we integrate we have $|f(a,b)-f(x,y)|\le \eta$.
Hence, for $n$ large enough,
\[
 |a_n-f(x,y)\gamma/\sigma^*_n| \le 2\eta\gamma/\sigma^*_n.
\]

Now $a_n$ is the average of the values $\pi_{n,t}$ over $t\in J_n$, so the bound \eqref{pidiff} implies
that all of these values are within $\eta\gamma/\sigma^*_n$ of $a_n$.
For $n$ large enough, $y_n\in J_n$, so
\begin{equation}\label{pyn}
 |\pi_{n,y_n} -f(x,y)\gamma/\sigma^*_n |\le 3\eta\gamma/\sigma_n^*.
\end{equation}

Since the component of $\Hrnp$ with $L_{1,n}$ vertices and nullity $N_{1,n}$ is by definition
connected, \eqref{nulldef} gives $L_{1,n}+N_{1,n}\equiv 1$ modulo $r-1$. Hence
\begin{multline}\label{pisum}
 \pi_{n,y_n} = \Pr\bb{N_{1,n}=y_n,\, L_{1,n}-(r-2)y_n\in I_n} \\
 = \sum_{s\in S_n} \Pr\bb{L_{1,n}=s,\,N_{1,n}=y_n}
\end{multline}
where $S_n$ consists of all integers in $I_n+(r-2)y_n$ congruent to $1-y_n$ modulo $r-1$.
Hence
\begin{equation}\label{Ssize}
 |S_n| = \frac{|I_n|}{r-1}+O(1) = \frac{\gamma\sigma_n}{r-1}+O(1) \sim \frac{\gamma\sigma_n}{r-1}.
\end{equation}

Recall that $x_n=\rho_{r,\la}n+x\sigma_n+o(\sigma_n)$ and
$y_n=\rho^*_{r,\la}n+O(\sigma_n^*)=\rho^*_{r,\la}n+o(\sigma_n)$.
Thus $x_n-(r-2)y_n=\trho_{r,\la}n+(x+o(1))\sigma_n$ and so for $n$ large enough $x_n-(y-2)y_n\in I_n$
and so $x_n\in S_n$.
Furthermore $s\in S_n$ implies $|s-\rho_{r,\la}n|\le |x_n-\rho_{r,\la}n|+\gamma\sigma_n = O(\sigma_n)$.
Hence, for $n$ large enough, $S_n\subseteq \range_n$.
It follows by \eqref{vref} that the probabilities summed in \eqref{pisum} are all within
$\eta/(\sigma_n\sigma_n^*)$ of each other and hence of their average, which by \eqref{pyn}
and \eqref{Ssize} is within $3r\eta/(\sigma_n^*\sigma_n)$ of $(r-1)f(x,y)/(\sigma_n\sigma_n^*)$.
Since $x_n\in S_n$ this concludes the proof of \eqref{aimeta}
and hence that of Theorem~\ref{thprob}.
\end{proof}

\section{Smoothing the excess: multi-round exposure}\label{sec_mr}

In this section we prove Theorem~\ref{thexcess}. The arguments in this section do
not obviously simplify in the case $\eps\to 0$, so throughout we work with
our Weak Assumption~\ref{A1}, i.e.,
we let $p=p(n)=(1+\eps)(r-2)!n^{-r+1}$
where $\eps=\eps(n)$ satisfies $\eps=O(1)$ and $\eps^3 n\to\infty$.

Set 
\[
 p_1=(1+\eps/2) (r-2)! n^{-r+1}
\]
and define $p_2$ by $p=p_1+p_2-p_1p_2$, noting that 
\begin{equation}\label{p2sim}
 p_2\sim (\eps /2)(r-2)! n^{-r+1} = \Theta(\eps n^{-r+1}).
\end{equation}
Using a now standard idea originally due to Erd\H os and R\'enyi~\cite{ERgiant}, we shall view $\Hrnp$ as $H_1\cup H_2$
where $H_1$ and $H_2$ are independent, and $H_i$ has the distribution $\Hrnpi$.
To prove Theorem~\ref{thexcess} we first `reveal' (i.e., condition on) $H_1$. Then we reveal many but not
all edges of $H_2$. We do this in such a way that the remaining edges of $H_2$ must be of a simple
type. We then show that the conditional distribution of the number of these edges present is essentially binomial. Since
each will contribute $1$ to $N_1=n(\Hrnp)$, this will allow us to prove the result. The strategy is inspired
by a related argument of Behrisch, Coja-Oghlan and Kang~\cite{BC-OK1}, 
itself based on ideas of Coja-Oghlan, Moore and Sanwalani~\cite{C-OMS}, though the details
are very different since the objective is different. (Their argument is used to
`smooth' $L_1$ rather than $N_1$, and requires  $\eps$ bounded away from zero.)

We start with a simple lemma showing
that the distribution we shall use for smoothing is indeed smooth in the relevant sense.
\begin{lemma}\label{F'smooth}
Let $r\ge 3$ be fixed.
Given integers $i,\ell>0$ and a real number $0<\pi<1$, for $0\le a\le i/(r-2)$ let
\begin{equation}\label{nadef}
 n_a=n_{i,\ell,a}=\frac{1}{a!} \binom{i}{r-2}\binom{i-(r-2)}{r-2}\cdots \binom{i-(a-1)(r-2)}{r-2} \binom{\ell}{2}^a,
\end{equation}
and let $Y_{i,\ell,\pi}$ be the probability distribution on the non-negative integers defined by
\[
 \Pr(Y_{i,\ell,\pi}=a) = p_a = p_{i,\ell,\pi,a} =  \pi^a n_a\ \Big/\ \sum_{b=0}^{i/(r-2)} \pi^b n_b.
\]
Let $\eps=\eps(n)$ satisfy $\eps^3n\to\infty$ and $\eps=O(1)$, set $\sigma_0=\sigma_0(n)=\sqrt{\eps^3 n}$,
and let $i=i(n)$, $\ell=\ell(n)$ and $\pi=\pi(n)$ satisfy  $i=\Theta(n)$, $\ell=\Theta(\eps n)$
and $\pi= \Theta(\eps n^{-r+1})$.
Then, whenever $(y_n)$ and $(y_n')$ satisfy $y_n-y_n'=o(\sigma_0)$, we have
\begin{equation}\label{Ynsmooth1}
 \Pr(Y_n=y_n)-\Pr(Y_n=y_n') = o(1/\sigma_0),
\end{equation}
where $Y_n=Y_{i(n),\ell(n),\pi(n)}$.
\end{lemma}
Although the reader need not check this, Lemma~\ref{F'smooth} says that certain sequences $(\cD_n)$
of sets of probability distributions of the type $Y_{i,\ell,\pi}$ are $\sigma_0(n)$-smooth
in the sense of \eqref{smcond}.
\begin{proof}
Fix sequences $\eps(n)$, $i(n)$, $\ell(n)$ and $\pi(n)$ satisfying the conditions above; in what follows,
much of the time we suppress the dependence on $n$ in the notation.

Let $(x)_y$ denote the falling factorial $x(x-1)\cdots (x-y+1)$. Then, with $n$ fixed,
for $a+1\le i/(r-2)$ we have
\begin{equation}\label{qa}
 q_a=\frac{p_{a+1}}{p_a} = \frac{1}{a+1} \frac{\pi}{(r-2)!} \binom{\ell}{2} (i-a(r-2))_{r-2}.
\end{equation}
The sequence $(q_a)$ is strictly decreasing, so $(p_a)$ is unimodal.

For $a=a(n)$ satisfying $i-a(r-2)=\Omega(n)$, by the assumptions on $i$, $\ell$ and $\pi$ above we have
\[
 q_a=\Theta\bb{ (a+1)^{-1} (\eps n^{-r+1}) (\eps n)^2 n^{r-2} } = \Theta(\eps^3 n/(a+1)).
\]
For $i-a(r-2)=o(n)$ it is easy to see that $q_a=o(1)$. 
Let $a_0=a_0(n)$ be the minimal integer such that $q_{a_0}\le 1$.
Then we have $a_0=\Theta(\eps^3 n)$ and hence $i-a_0=\Omega(n)$.

Writing $\sigma_0=\sigma_0(n)=\sqrt{\eps^3n}$,
it follows from \eqref{qa} that for $a=a(n)=a_0+O(\sigma_0)$ we have
\begin{equation}\label{qbd}
 q_a = q_{a_0}(1+O(\sigma_0/a_0)) = q_{a_0}(1+O(\sigma_0^{-1})) =1+O(\sigma_0^{-1}).
\end{equation}
Since $q_a=p_{a+1}/p_a$, this has the following
consequence: for any sequences $a_1=a_1(n)$ and $a_2=a_2(n)$ such that $a_i=a_0+O(\sigma_0)$,
$a_1-a_2=o(\sigma_0)$ and $a_1<a_2$,
we have\footnote{To deduce \eqref{pasim} we need \eqref{qbd}
to hold uniformly in $a$ with $a_1(n)\le a< a_2(n)$. To see that it does, choose
the `worst-case' $a=a(n)$ in this range for each $n$ and apply \eqref{qbd} to the resulting
sequence.}
 \begin{equation}\label{pasim}
 p_{a_2}/p_{a_1} = \prod_{a_1\le a<a_2} q_a = (1+O(\sigma_0^{-1}))^{o(\sigma_0)} = 1+o(1).
\end{equation}
From the unimodality of $(q_a)$ and the definition of $a_0$ we have $\max_a p_a = p_{a_0}$.
It is easy to see that $p_{a_0}=O(1/\sigma_0)$: otherwise, we could use \eqref{pasim}
to deduce that $\sum_a p_a>1$, a contradiction.
Hence, $\max_a p_a=p_{a_0}=O(1/\sigma_0)$. Thus, from \eqref{pasim}, for $a_i=a_0+O(\sigma_0)$ we have
\begin{equation}\label{padiff}
 a_1-a_2=o(\sigma_0) \implies p_{a_2} - p_{a_1} = o(1/\sigma_0).
\end{equation}
For $a>a_0$, by unimodality we have
\[
 1=\sum_b p_b \ge \sum_{a_0<b\le a} p_b \ge (a-a_0) p_a,
\]
so if $(a-a_0)/\sigma_0\to\infty$ then $p_a=o(1/\sigma_0)$. Similarly,
if $(a_0-a)/\sigma_0\to\infty$ then $p_a=o(1/\sigma_0)$. It follows that \eqref{padiff}
holds for any sequences $a_1(n)$, $a_2(n)$ with $a_1-a_2=o(1/\sigma_0)$, which is exactly
\eqref{Ynsmooth1}.
\end{proof}


\begin{proof}[Proof of Theorem~\ref{thexcess}.]
Define $p_1$, $p_2$, $H_1$ and $H_2$ as at the start of the section, and set
\[
 \sigma_0= \sqrt{\eps^3n}.
\]
(Recall that, up to a constant factor, $\sigma_0^2$ is the variance of $N_1(\Hrnp)$.)
We shall first apply Theorem~\ref{thsupertail} to $H_1$, noting that $(\eps/2)^3n\to\infty$.
Let $\cC_1$ be the component of $H_1$ with the most vertices
(chosen according to any rule if there is a tie).
Since $\rho_{r,1+\eps/2}=\Theta(\eps)$, by Theorem~\ref{thsupertail} there are constants $0<c<C$ such that
the event
\[
 \cE_1 = \{ c\eps n \le  |\cC_1| \le C\eps n \}
\]
satisfies
\[
 \Pr( \cE_1^\cc) = \exp(-\Omega(\eps^3n)) = o( 1/\sigma_0).
\]
By the last part of Theorem~\ref{thsupertail},
\[
 \Pr( L_2(\Hrnp) \ge c\eps n) \le \exp(-\Omega(\eps^3n)) = o( 1/\sigma_0).
\]
Let $\cE_2$ be the event that $\cC_1$ is contained in the
largest component $\cL_1$ of $\Hrnp=H_1\cup H_2$. Since $H_1\subset\Hrnp$,
we have
\[
 \Pr(\cE_2^\cc) \le \Pr(\cE_1^\cc) + \Pr(L_2(\Hrnp)\ge c\eps n) =o(1/\sigma_0).
\]
Let $i(H)$ denote the number of isolated vertices in a hypergraph $H$.
It is easy to see that $\E[i(\Hrnp)]=\Theta(n)$. Let $c'$ be a constant
such that $\E[i(\Hrnp)]\ge 2c'n$ for large enough $n$,
and let $\cE_3$ be the event
\begin{equation}\label{e3def}
 \cE_3 = \{ i(\Hrnp)\ge c'n \}.
\end{equation}
Then standard concentration arguments (e.g., a simple application of the Hoeffding--Azuma inequality)
show that
\[
 \Pr(\cE_3^\cc) = \exp(-\Omega(n)) =o(1/\sigma_0).
\]

Reveal all edges of $H_1$, which of course determines $\cC_1$. We shall reveal some partial information
about $H_2$ in a two-step process.

First, test $r$-sets (i.e., potential edges) for their presence in $H_2$ according
to the following algorithm: if there is any untested $r$-set $e$ which does not consist of two
vertices in $\cC_1$ and $r-2$ vertices that are isolated in the current hypergraph $H$,
then pick some such $r$-set $e$ and test whether it is present in $H_2$. Otherwise, stop.
By the `current hypergraph' we mean
the hypergraph formed by the edges revealed so far, so $H_1\subset H\subset H_1\cup H_2=\Hrnp$.

Let $H$ be the hypergraph revealed at the end of the algorithm,
let $\cI$ be the set of isolated vertices of $H$, and let $U$ be the set of untested $r$-sets
when the algorithm stops. Then $U$
has a very simple form: it consists precisely of all $\binom{|\cC_1|}{2}\binom{|\cI|}{r-2}$
$r$-sets with two vertices in $\cC_1$ and $r-2$ in $\cI$.
To see this, note first that if there were any untested $r$-set not of this form, the algorithm
would not have stopped. Conversely, since any isolated vertices in the final hypergraph $H$ were isolated
throughout the running of the algorithm, and $\cC_1$ (a component of $H_1$, \emph{not}
of the current graph) does not change as the algorithm runs,
any $r$-set of this form cannot have been tested.

At this point, each untested edge is present independently with conditional probability $p_2$.

In the second step, we 
reveal the set $F$ of edges $e$ in $U$ present in $H_2$ with the property that some vertex
of $e\cap \cI$ is incident with one or more other edges of $H_2$. Let $\cI'$
be the set of vertices in $\cI$ not incident with edges in $F$.

Let $\cF$ denote the the $\sigma$-algebra generated by all the information revealed so far,
and let $F'$ be the set of edges of $H_2$ not yet revealed. Then $F'$ consists of edges
with two vertices in $\cC_1$ and $r-2$ in $\cI'$, with the corresponding subsets of $\cI'$
disjoint. Further more, given $\cF$ (which determines $\cC_1$ and $\cI'$),
any set $F'$ of edges satisfying this description is possible.
Let $Y_n=|F'|$; this will be our smoothing random variable. 
Recalling the definition \eqref{nadef} of $n_{i,\ell,a}$, there are exactly $n_{|\cI'|,|\cC_1|,a}$
possible sets $F'$ with $a$ edges. Let $\pi=p_2/(1-p_2)$.
Since the probability of a hypergraph in the model $\Hrnptwo$ is proportional to $\pi$
raised to the power of the number of edges,
we see that (for $r\ge 3$) the conditional distribution of $Y_n=|F'|$ given $\cF$ is exactly the distribution
$Y_{|\cI'|,|\cC_1|,\pi}$ defined in Lemma~\ref{F'smooth}.

Let $\cE$ be the event
\[
 \cE = \cE_1 \cap \{|\cI'|\ge c' n\},
\]
where $c'$ is as in the definition \eqref{e3def} of $\cE_3$. Note that $\cE$ is $\cF$-measurable.
Since every isolated vertex of $\Hrnp$ is in $\cI'$, we have
\begin{equation}\label{eprob}
 \Pr(\cE^\cc) \le \Pr(\cE_1^\cc)+\Pr(\cE_3^\cc) = o(1/\sigma_0).
\end{equation}
When $\cE$ holds, then $|\cC_1|=\Theta(\eps n)$ and $|\cI'|=\Theta(n)$; 
from \eqref{p2sim} we always have $\pi=p_2/(1-p_2)=\Theta(\eps n^{-r+1})$.
Let $(\omega_n)$ be a sequence of elements of the probability space(s) on which $\Hrnp$ is defined, with $\omega_n\in \cE=\cE_n$.
By Lemma~\ref{F'smooth},\footnote{For $r=2$ (which is not our main focus)
we cannot apply Lemma~\ref{F'smooth}. However, in this case
$F'$ is simply the set of edges of $H_2$ with both ends in $\cC_1$.
This has a binomial distribution with parameters $\Theta(\eps^2n^2)$ and $\Theta(\eps n^{-1})$;
the family of such distributions is $\sigma_0$-smooth, so \eqref{Ynsmooth} holds in this case also.}
for any such sequence $(\omega_n)$ and for any sequences $y_n$, $y_n'$ with $y_n-y_n'=o(\sigma_0)$ we have
\begin{equation}\label{Ynsmooth}
 \Pr\bb{ Y_n=y_n \mid \cF }(\omega_n) - \Pr\bb{Y_n=y_n' \mid \cF}(\omega_n) = o(1/\sigma_0).
\end{equation}

Fix sequences $t_n,t_n'\ge 0$ with $t_n-t_n'=o(\sigma_0)$ and a sequence $(I_n)$ of subsets of $\ZZ$.
Our aim is to show that
\begin{multline}\label{excessaim}
 \Pr\bb{ N_1=t_n,\, L_1-(r-2)N_1\in I_n }  \\ 
- \Pr\bb{ N_1=t_n',\, L_1-(r-2)N_1\in I_n } = o(1/\sigma_0).
\end{multline}
Let $\cC$ be the component of $H\supset H_1$ containing $\cC_1$, and $\cC'$
the component of $\Hrnp$ containing $\cC$ (and hence $\cC_1$). 
Let
\[
 X_n=n(\cC) \hbox{\quad and\quad} Z_n=|\cC|-(r-2)n(\cC) = |\cC|-(r-2)X_n.
\]
Then $X_n$ and $Z_n$ are $\cF$-measurable, so from \eqref{Ynsmooth}, for any $\omega_n\in \cE$ we have
\begin{multline*}
 \Pr\bb{ X_n+Y_n=t_n,\,Z_n\in I_n\mid \cF}(\omega_n) \\
  - \Pr\bb{X_n+Y_n=t_n',\,Z_n\in I_n\mid \cF}(\omega_n) = o(1/\sigma_0).
\end{multline*}
As usual, this bound holds uniformly in $\omega_n\in \cE=\cE_n$, since we are free to choose $\omega_n$
to maximize the difference. Taking the expectation, and recalling that $\cE$ is $\cF$-measurable and
$\Pr(\cE^\cc)=o(1/\sigma_0)$, it follows that
\begin{equation}\label{excessalmost}
 \Pr(X_n+Y_n=t_n,\,Z_n\in I_n) - \Pr(X_n+Y_n=t_n',\,Z_n\in I_n) = o(1/\sigma_0).
\end{equation}

Now each edge in $F'$ meets $\cC$ in two vertices, and has no vertices outside
$\cC$ in common with any other edge of $F'$. Thus
\[
 n(\cC') = X_n + Y_n\hbox{\quad and\quad} |\cC'| = |\cC| +(r-2)Y_n,
\]
so
\[
 |\cC'|-(r-2)n(\cC') 
 = |\cC|-(r-2)X_n =Z_n.
\]
When $\cE_2$ holds, then $\cC'=\cL_1$. Hence, whenever $\cE_2$ holds, we have
\begin{equation}\label{shift}
 N_1 = X_n +Y_n \hbox{\quad and\quad} L_1-(r-2)N_1 = Z_n.
\end{equation}
Recalling that $\Pr(\cE_2)=1-o(1/\sigma_0)$, our aim \eqref{excessaim} follows
from \eqref{excessalmost} and \eqref{shift},
completing the proof of Theorem~\ref{thexcess}.
\end{proof}

\section{Trees and forests}\label{sec_tc}

For $m\ge 2$, an \emph{$m$-cycle} in a hypergraph $H$ consists of distinct
vertices $v_1,\ldots,v_m$ and distinct edges $e_1,\ldots,e_m$
such that each $e_i$ contains both $v_i$ and $v_{i+1}$, with $v_{m+1}$
defined to be $v_1$. 
Thus a $2$-cycle consists of two edges sharing at least two vertices.
Note that an $m$-cycle corresponds to a cycle of length $2m$
in the bipartite vertex-edge incidence graph $\GI(H)$ associated to $H$.

A hypergraph $H$ is a \emph{tree} if it is connected and contains no cycles,
or, equivalently, if $H$ can be built up by starting with a single
vertex, and adding new edges one-by-one so that each meets
the current hypergraph in exactly one vertex. Note that $H$ is a tree
if and only if $\GI(H)$ is a tree.

By an \emph{$r$-tree} we simply mean an $r$-uniform hypergraph
that is a tree. An \emph{$r$-forest} is a vertex-disjoint union of $r$-trees.
For $A\subset V$, an \emph{$A$-rooted $r$-forest on $V$} is an $r$-forest
with vertex set $V$ such that each component contains exactly one
vertex from $A$; in particular, there are $|A|$ components.
Note that $A$-rooted $r$-forests on $V$ exist if and only
if $|V|=|A|+(r-1)k$ for some integer $k\ge 0$ (the number of edges).
For $r=2$, the formula $a n^{n-a-1}$ for the number of $[a]$-rooted 2-forests
on $[n]$ was observed by Cayley~\cite{Cayley} and proved by R\'enyi~\cite{Renyi}.
We shall make repeated use of the following generalization
to hypergraphs, due to Selivanov~\cite{Selivanov}.

\begin{lemma}\label{l_rtree}
Let $r\ge 2$, $a\ge 1$ and $k\ge 0$ be integers, and set $n=a+(r-1)k$.
The number $F_{a,k}=F_{a,k}^{(r)}$ of $[a]$-rooted $r$-forests on $[n]=\{1,2,\ldots,n\}$ satisfies
\begin{equation}\label{Fan}
 F_{a,k} = a n^{k-1} \partit{k}{r-1},
\end{equation}
where
\[
 \partit{k}{t} = \frac{(kt)!}{k!\, t!^k}
\]
is the number of partitions of a set of size $kt$ into $k$ parts of size $t$.\noproof
\end{lemma}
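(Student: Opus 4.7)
The plan is to apply Lagrange inversion to the exponential generating function (EGF) of rooted $r$-trees. Let $R_m$ denote the number of $r$-trees on $[m]$ carrying a distinguished root vertex and set $R(x)=\sum_{m\ge 1} R_m\,x^m/m!$. Inspecting a rooted tree at its root reveals the root itself together with an unordered set of edges through it; each such edge brings in $r-1$ new vertices, each of which is the root of a further rooted $r$-subtree on a disjoint label set. Translating this decomposition into EGFs gives the functional equation
\[
 R(x) = x\exp\Bb{\frac{R(x)^{r-1}}{(r-1)!}}.
\]

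Next I would identify $F_{a,k}$ as a coefficient of $R(x)^a$. The power $R(x)^a$ is the EGF for ordered $a$-tuples of rooted $r$-trees on disjoint labeled vertex sets, so $n!\,[x^n]R(x)^a$ counts such ordered tuples whose combined vertex set is $[n]$, each tree carrying an arbitrary distinguished root. The symmetric group $S_n$ acts on these tuples by relabelling, and transitively on ordered $a$-tuples of distinct roots; hence the number of tuples realising any prescribed root sequence $(v_1,\dots,v_a)$ is the same, and averaging over the $n!/(n-a)!$ possible sequences yields
\[
 F_{a,k} = (n-a)! \, [x^n] R(x)^a,
\]
since an $[a]$-rooted $r$-forest is exactly a tuple with root sequence $(1,2,\dots,a)$.

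Finally, Lagrange inversion applied to $R=x\phi(R)$ with $\phi(R)=\exp(R^{r-1}/(r-1)!)$ gives
\[
 [x^n] R(x)^a = \frac{a}{n}\,[R^{n-a}]\phi(R)^n = \frac{a}{n}\,[R^{(r-1)k}] \sum_{j\ge 0}\frac{n^j\,R^{(r-1)j}}{j!\,(r-1)!^j} = \frac{a\,n^{k-1}}{k!\,(r-1)!^k},
\]
since only the $j=k$ term contributes. Multiplying by $(n-a)!=((r-1)k)!$ and recognising $((r-1)k)!/(k!(r-1)!^k)$ as $\partit{k}{r-1}$ then produces \eqref{Fan}. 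The main subtlety is Step~2: one has to notice that $R(x)^a$ counts tuples with arbitrary, not prescribed, root sequences, and use the transitivity of the $S_n$-action on ordered $a$-tuples of distinct vertices to account for the factor $(n-a)!$ instead of $n!$. Once this is in place, Lagrange inversion mechanically produces the formula.
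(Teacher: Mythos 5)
Your proof is correct, but it follows a genuinely different route from the paper's. You set up the exponential generating function $R(x)$ for rooted $r$-trees, derive the functional equation $R=x\exp(R^{r-1}/(r-1)!)$ from the root decomposition, relate $F_{a,k}$ to $[x^n]R(x)^a$ via the transitivity of the $S_n$-action on ordered $a$-tuples of distinct roots, and finish with Lagrange inversion. The paper instead argues bijectively: it observes that every $[a]$-rooted $r$-forest determines a partition of $[n]\setminus[a]$ into $k$ ``groups'' of size $r-1$ (accounting for the factor $\partit{k}{r-1}$), and then, for a fixed partition, gives a Pr\"ufer-type code establishing a bijection between the forests with that partition and $[n]^{k-1}\times[a]$ (accounting for the factor $an^{k-1}$). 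Each approach has its merits: the Pr\"ufer bijection is fully explicit and elementary, gives direct combinatorial insight into why the factor $an^{k-1}$ appears, and is handy if one wants to sample uniformly; your generating-function route is more mechanical once the functional equation is written down, generalises painlessly to other weight statistics, and in particular the step where you identify $F_{a,k}=(n-a)!\,[x^n]R(x)^a$ by averaging over root sequences is a clean way to pass from ``unspecified roots'' to ``prescribed roots'' that is worth remembering.
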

For completeness we give a proof in the Appendix, since the original source is perhaps
a little obscure. (We only became aware of it from Karo\'nski and \L uczak~\cite{KL_sparse}).

One consequence of Lemma~\ref{l_rtree} is the following surprisingly simple
bound on the expected number of vertices at a given distance from the root set in a random
$[a]$-rooted $r$-forest. Recall that $(x)_y$ denotes the falling factorial $x(x-1)\cdots (x-y+1)$.

\begin{lemma}\label{l_dt}
Let $r\ge 2$, $a\ge 1$ and $k,\ell\ge 0$ be integers, and set $n=a+(r-1)k$.
Choosing an $[a]$-rooted $r$-forest on $[n]$ uniformly at random, the expected number of vertices
at graph distance exactly $\ell$ from $[a]$ is equal to
\[
 (a+(r-1)\ell) \frac{(r-1)^\ell (k)_\ell}{n^\ell}
\]
and is (hence) at most $a+(r-1)\ell$.
\end{lemma}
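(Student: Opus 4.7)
The plan is to count pairs $(H, v)$ where $H$ is an $[a]$-rooted $r$-forest on $[n]$ and $v$ is a vertex at distance exactly $\ell$ from $[a]$, then divide by $|\cH| = F_{a,k}$. For $\ell=0$ both sides equal $a$, so assume $\ell \ge 1$. Since $H$ is a forest, each such $v$ determines a unique path $v_0 - e_1 - v_1 - e_2 - \cdots - e_\ell - v_\ell = v$ with $v_0\in[a]$ and $v_i\in[n]\setminus[a]$ for $i\ge 1$. Each path-edge $e_i$ also carries $r-2$ additional vertices, all at distance exactly $\ell$ (for $i=\ell$) or at some intermediate distance otherwise; importantly, all such vertices lie in $[n]\setminus[a]$ and are distinct from each other and from the $v_j$'s.

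First I would count the number of ways to choose the path data: the root $v_0$ contributes $a$ choices, the ordered tuple $(v_1,\ldots,v_\ell)$ contributes $(n-a)_\ell$ choices, and the unordered $(r-2)$-sets of additional vertices for $e_1,\ldots,e_\ell$ contribute $\prod_{i=1}^\ell \binom{n-a-\ell-(i-1)(r-2)}{r-2}$. A short calculation (the descending factorials nest without gaps) shows the product of these three equals $a \cdot (n-a)_{\ell(r-1)}/((r-2)!)^\ell$. Then I would observe that $H\setminus\{e_1,\ldots,e_\ell\}$ is an $A$-rooted $r$-forest on $[n]$, where $A$ is the set of size $a+\ell(r-1)$ consisting of $[a]$ together with the $v_i$'s and all additional path-edge vertices; by Lemma~\ref{l_rtree} (applied after relabelling) the number of such completions is
\[
 F_{a+\ell(r-1),\,k-\ell} = \bb{a+\ell(r-1)}\, n^{k-\ell-1}\,\partit{k-\ell}{r-1}.
\]

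Multiplying and dividing by $F_{a,k} = a\, n^{k-1}\,\partit{k}{r-1}$ gives the expectation
\[
 E_\ell = \frac{\bb{a+(r-1)\ell}\,(n-a)_{\ell(r-1)}}{n^\ell ((r-2)!)^\ell}\cdot\frac{\partit{k-\ell}{r-1}}{\partit{k}{r-1}}.
\]
The ratio of the partition counts simplifies to $(k)_\ell ((r-1)!)^\ell/(n-a)_{\ell(r-1)}$, since $n-a=(r-1)k$; the $(n-a)_{\ell(r-1)}$ factors cancel and $(r-1)!/(r-2)! = r-1$, so everything collapses to $\bb{a+(r-1)\ell}(r-1)^\ell (k)_\ell/n^\ell$, matching the claim.

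For the final bound, I would use $n = a+(r-1)k \ge (r-1)k$, which gives $(r-1)^\ell (k)_\ell \le ((r-1)k)^\ell \le n^\ell$, hence the prefactor $(r-1)^\ell (k)_\ell/n^\ell$ is at most $1$ and $E_\ell \le a+(r-1)\ell$. There is no real obstacle here: the only thing requiring care is the bookkeeping in the path-edge enumeration (making sure the nested falling factorials telescope correctly and that one does not over- or under-count the ordering of the additional vertices inside each edge). The core identity that makes the formula so clean is the nesting $(n-a)_\ell \cdot (n-a-\ell)_{\ell(r-2)} = (n-a)_{\ell(r-1)}$, together with the exact cancellation against $(n-a)_{\ell(r-1)}$ coming from the partition-number ratio.
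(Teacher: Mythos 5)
Your proof is correct and follows essentially the same approach as the paper's: both count pairs $(F,v)$ by exploiting the uniqueness of the path from $v$ to $[a]$ in a tree, enumerate the path data, invoke Lemma~\ref{l_rtree} for the forest rooted at the enlarged root set $[a]\cup e_1\cup\cdots\cup e_\ell$, and divide by $F_{a,k}$. The only difference is cosmetic bookkeeping — you choose the transit vertices $(v_1,\ldots,v_\ell)$ all at once and then fill in the $(r-2)$-sets, while the paper interleaves these choices — but the resulting counts and cancellations are identical.
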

\begin{proof}
Let $N$ be the number of ordered pairs $(F,v)$ where $F$ is an $[a]$-rooted $r$-forest
on $[n]$ and $v\in [n]$ is at graph distance $\ell$ from $[a]$ in $F$.
Since there is a unique path from $v$ to $[a]$ in $F$, we can instead view
$N$ as the number of tuples $(F,v_0,e_1,\ldots,v_{\ell-1},e_\ell,v_\ell)$
where $F$ is an $[a]$-rooted $r$-forest on $[n]$,
$v_0\in [a]$, and $v_0e_1\cdots e_\ell v_\ell$ is a path in $F$.
(The bijection from such tuples to pairs $(F,v)$ maps $v_\ell$ to $v$.)

With $F$ not yet determined, there are $a$ choices for $v_0$,
then $\binom{(r-1)k}{r-1}$ choices for the remaining vertices that
with $v_0$ make up $e_1$. Then there are $r-1$ choices for $v_1$,
then $\binom{(r-1)(k-1)}{r-1}$ choices for the rest of $e_2$, and so on,
giving
\begin{align*}
 N_1 &= a (r-1)^\ell \binom{(r-1)k}{r-1}\cdots\binom{(r-1)(k-\ell+1)}{r-1} \\
 &= a(r-1)^\ell \frac{((r-1)k)!}{((r-1)(k-\ell))! (r-1)!^\ell}
\end{align*}
choices for $(v_0,e_1,\ldots,e_\ell,v_\ell)$. Now we must choose an $[a]$-rooted
$r$-forest $F$ on $[n]$ containing the edges $e_1,\ldots,e_\ell$; this is the same as choosing
an $[S]$-rooted $r$-forest $F'$ on $[n]$ where $S=[a]\cup e_1\cup\cdots\cup e_\ell$
is a set of $a+(r-1)\ell$ vertices. By Lemma~\ref{l_rtree}
we thus have
\begin{align*}
 N &= (a+(r-1)\ell) n^{k-\ell-1} \frac{((r-1)(k-\ell))!}{(k-\ell)!(r-1)!^{k-\ell}} N_1 \\
 &= (a+(r-1)\ell) n^{k-\ell-1} a (r-1)^\ell \frac{((r-1)k)!}{(k-\ell)! (r-1)!^k}.
\end{align*}
The expectation we wish to calculate
is precisely $N$ divided by the number of $[a]$-rooted $r$-forests on $[n]$.
By Lemma~\ref{l_rtree} the expectation is thus
\begin{align*}
 (a+(r-1)\ell) n^{-\ell} (r-1)^\ell \frac{k!}{(k-\ell)!}
  &= (a+(r-1)\ell) \frac{(r-1)^\ell (k)_\ell}{n^\ell} \\
  &\le (a+(r-1)\ell) ((r-1)k/n)^\ell \\
  &\le a+(r-1)\ell,
\end{align*}
as required.
\end{proof}
Note that, surprisingly, $k$ does not appear in the final upper bound in the lemma above.

\section{The smoothing distribution}\label{sec_dist}

Given positive integers $m$ and $a$, let $A_1\subset A\subset V$
with $|A_1|=a$, $|A|=2a$ and $|V|=2a+(r-1)m$,
and let $F$ be an $A$-rooted $r$-forest on $V$ chosen uniformly at random.
Let $Y_{m,a}$ be the total number
of edges of $F$ in components rooted in $A_1$. Note that $F$ has $m$ edges, so $0\le Y_{m,a}\le m$.

\begin{lemma}\label{ldist}
Let $m=m(n)$ and $a=a(n)$ satisfy $m=o(a^2)$ and $m=\Omega(a)$, let $Y_n=Y_{m,a}$,
and set $\sigma_n=m^{3/2}a^{-1}$.
Then for any integers $x_n$, $y_n$ with $x_n-y_n=o(\sigma_n)$ we have
\[
 \Pr(Y_n=x_n)-\Pr(Y_n=y_n)=o(1/\sigma_n),
\]
and $\Pr(Y_n=x_n)=O(1/\sigma_n)$.
\end{lemma}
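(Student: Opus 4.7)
The plan is to compute $p_k := \Pr(Y_{m,a}=k)$ in closed form via Selivanov's formula (Lemma~\ref{l_rtree}), and then extract smoothness from a Taylor analysis of $\log p_k$ about its peak. An $A$-rooted $r$-forest $F$ on $V$ with $Y_{m,a}(F)=k$ is specified by (i) which $(r-1)k$ of the $(r-1)m$ vertices of $V\setminus A$ lie in the $A_1$-rooted components of $F$, (ii) an $A_1$-rooted $r$-forest on those vertices together with $A_1$, and (iii) an $(A\setminus A_1)$-rooted $r$-forest on the rest. Applying Lemma~\ref{l_rtree} three times in the numerator and once to $F_{2a,m}^{(r)}$ in the denominator, the partition-count factors $\partit{\cdot}{r-1}$ collapse into a single $\binom{m}{k}$ and one obtains
\[
  p_k = \binom{m}{k}\,\frac{a\, N_1(k)^{k-1} N_2(k)^{m-k-1}}{2\,N^{m-1}},
\]
where $N = 2a+(r-1)m$, $N_1(k)=a+(r-1)k$, and $N_2(k)=N-N_1(k)$. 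In particular $p_k=p_{m-k}$.

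Since $m=\Omega(a)$ and $m=o(a^2)$ force $a\to\infty$ and $N=\Theta(m)$, Stirling immediately gives $p_{m/2}\sim 2a/(N\sqrt{\pi m/2})=\Theta(a/m^{3/2})=\Theta(1/\sigma_n)$. Extend $L(k):=\log p_k$ smoothly to real $k$ via $\log\Gamma$; symmetry gives $L'(m/2)=0$, and differentiating each of the three summands of $L$ produces
\[
  L''(m/2) = -\frac{4}{m}+\frac{8(r-1)}{N}-\frac{4(r-1)^2 m}{N^2}.
\]
Writing $\eta := 2a/((r-1)m)$ so that $N=(r-1)m(1+\eta)$, this collapses to
\[
  L''(m/2) = \frac{1}{m}\bigl(-4+\tfrac{8}{1+\eta}-\tfrac{4}{(1+\eta)^2}\bigr) = -\frac{4\eta^2}{m(1+\eta)^2} = -\Theta(1/\sigma_n^2),
\]
using $\eta=O(1)$ in the given regime.

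For $|k-m/2|\le C\sigma_n$ with $C$ any fixed constant, Taylor expansion of $L$ at $m/2$, together with routine bounds on $L'''$, yields $L(k)-L(m/2)=O(1)$ and $L(k+1)-L(k)=O(1/\sigma_n)$; hence $p_k=\Theta(1/\sigma_n)$ and $|p_{k+1}-p_k|=p_k\cdot|e^{L(k+1)-L(k)}-1|=O(1/\sigma_n^2)$. Telescoping over the $o(\sigma_n)$ integers between $x_n$ and $y_n$ gives $|p_{x_n}-p_{y_n}|=o(1/\sigma_n)$ whenever both lie in this range. For $|k-m/2|/\sigma_n\to\infty$ the lower bound $-L''\gtrsim 1/\sigma_n^2$ (which persists on a wide neighbourhood of $m/2$, by the same calculation) gives $p_k\le p_{m/2}\exp(-\Omega((k-m/2)^2/\sigma_n^2))=o(1/\sigma_n)$, so both conclusions hold trivially in that regime.

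The main obstacle is the algebraic cancellation $-4+8/(1+\eta)-4/(1+\eta)^2=-4(\eta/(1+\eta))^2$: the three summands of $L''(m/2)$ are each individually of order $1/m$, yet their combination is $-\Theta(\eta^2/m)$, of order $1/\sigma_n^2$. Without this cancellation one would conclude $-L''(m/2)=\Theta(1/m)$ and hence a smoothness scale $\sqrt{m}$ rather than $m^{3/2}/a$. Verifying the identity and propagating the Taylor control uniformly across the bulk (where $-L''$ must remain of order $1/\sigma_n^2$ and $|L'''|$ must be suitably small) is the routine but delicate calculation at the heart of the proof.
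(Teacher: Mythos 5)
Your proposal follows essentially the same route as the paper: both start from the exact Selivanov-based formula for $p_k=\Pr(Y_n=k)$ and extract smoothness from a second-order analysis of $\log p_k$ about its peak, the crux being exactly the cancellation you highlight — the paper packages it as $p_k\sim(c/\sigma)f(x)\exp(-mg(x))$ and computes $g''(1/2)=\Theta(\beta^2)$, while you differentiate $\log p_k$ directly and find the same cancellation in $L''(m/2)$. The one place where you are slightly underjustified is the parenthetical claim that $-L''\gtrsim 1/\sigma_n^2$ persists on a ``wide neighbourhood of $m/2$, by the same calculation'': the claim is in fact true (a short computation shows $-L''(k)\asymp a^2/(k(a+(r-1)k)^2)+a^2/((m-k)(a+(r-1)(m-k))^2)\gtrsim a^2/m^3$ on all of $[1,m-1]$), but it does not follow from the calculation at $m/2$, and the paper explicitly flags that the polynomial ``prefactor'' blows up as $k/m\to 0$ or $1$, handling the extreme range $k\le c_0a$ via the ratio $p_{k+1}/p_k\ge 1$ and the bound $g(x)\gtrsim\beta^2/x$ rather than the curvature estimate. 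So your proof is correct but needs a few lines verifying the curvature lower bound away from $m/2$ to be complete.
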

In the terminology of Section~\ref{sec_example}, the sequence of distributions
$Y_{m(n),a(n)}$ is $\sigma_n$-smooth.
\begin{proof}
As usual, we suppress the dependence on $n$ in the notation, for example writing $\sigma$ for $\sigma_n$.

Note first that our assumptions imply that $a=O(m)=o(a^2)$, so certainly $a\to\infty$ and thus $m\to\infty$.
Note for later that $\sigma/m=m^{1/2}a^{-1}=\sqrt{m/a^2}$, so
\[
 \sigma=o(m).
\]

Let $p_k=p_{n,k}=\Pr(Y_n=k)$.
Considering first the choices for the vertices outside $A$ appearing in the subforest rooted at $A_1$, we
see that
\[
 p_k = \binom{(r-1)m}{(r-1)k}  \frac{F_{a,k}F_{a,m-k}}{F_{2a,m}},
\]
where $F_{a,k}$ denotes the number of $X$-rooted $r$-forests on $Y$ when $X\subset Y$ with $|X|=a$ and 
$|Y|=a+(r-1)k$.
From now on, let us write $t$ for $r-1$, since this will appear so often in the following calculations.
By Lemma~\ref{l_rtree}, writing $\ell$ for $m-k$, for $0\le k \le m$ we have
\begin{eqnarray}
 p_k  
 &=& \binom{tm}{tk} \frac{a (a+tk)^{k-1} (tk)!k!^{-1}t!^{-k} a (a+t\ell)^{\ell-1} (t\ell)!\ell!^{-1}t!^{-\ell}}
{2a(2a+tm)^{m-1} (tm)!m!^{-1}t!^{-m}} \nonumber \\
 &=& \frac{a}{2}\binom{m}{k}\frac{ (a+tk)^{k-1} (a+t\ell)^{\ell-1} }{(2a+tm)^{m-1}}. \label{pk}
\end{eqnarray}
We shall prove the following three statements concerning functions $k$, $k_1$ and $k_2$ of $n$
bounded between $0$ and $m(n)$, where $\sigma=\sigma(n)=m^{3/2}a^{-1}$:
\begin{equation}\label{aim2}
 \hbox{If $k_1=k_2+o(\sigma)$ and $k_1,k_2=m/2+O(\sigma)$ then $p_{k_1}\sim p_{k_2}$.}
\end{equation}
\begin{equation}\label{aim1}
 \hbox{If $k=m/2+O(\sigma)$ then $p_k=O(1/\sigma)$.}
\end{equation}
\begin{equation}\label{aim3}
\hbox{If $|k-m/2|/\sigma\to\infty$ then $p_k=o(1/\sigma)$.}
\end{equation}
(As usual, we quantifying over sequences here: the formal statement of \eqref{aim1}, for example,
is that for any sequence $k(n)$ such that $\limsup_n |k(n)-m(n)/2|/\sigma(n)<\infty$,
we have $\limsup_n p_{n,k(n)}\sigma(n)<\infty$.)

Suppose for the moment that \eqref{aim2}--\eqref{aim3} hold, and consider sequences $k_1=k_1(n)$ and $k_2=k_2(n)$
with $k_1-k_2=o(\sigma)$. The lemma asserts that then
\begin{equation}\label{finalaim}
 p_{k_1}-p_{k_2}=o(1/\sigma) \hbox{\quad and\quad} p_{k_1}=O(1/\sigma).
\end{equation}
In the special case where $k_1=m/2+O(\sigma)$, the relations \eqref{aim2} and \eqref{aim1} give \eqref{finalaim}.
In the special case where $|k_1-m/2|/\sigma\to\infty$, then also $|k_2-m/2|/\sigma\to\infty$,
so by \eqref{aim3} both $p_{k_1}$ and $p_{k_2}$ are $o(1/\sigma)$, and \eqref{finalaim} follows.
The general case now follows by a standard subsequence argument: a counterexample would have a subsequence
falling into one of these two special cases.

Our aim is now to prove \eqref{aim2}--\eqref{aim3}.
Let us first deal with the extreme values, i.e., cases where $k$ is very close to $0$
or to $m$.
We shall show that when $k\le c_0a$ for some constant $c_0$,
then $p_{k+1}\ge p_k$, so if we can show that $p_k=o(1/\sigma)$ for $k=\ceil{c_0a}$, then
the same bound for $k<\ceil{c_0a}$ follows. Here $c_0$ may depend on the sequences
$m(n)$ and $a(n)$, but not on $k(n)$.

From \eqref{pk} we see that for $0\le k<m$ we have
\begin{eqnarray*}
 \frac{p_{k+1}}{p_k} 
&=& \frac{\ell}{k+1} \ \frac{(a+t(k+1))^k}{(a+tk)^{k-1}} \ \frac{(a+t(\ell-1))^{\ell-2}}{(a+t\ell)^{\ell-1}} \\
&=& \frac{\ell}{k+1} \ \frac{a+tk}{a+t(\ell-1)} 
    \left(1+\frac{t}{a+tk}\right)^k\left(1+\frac{t}{a+t(\ell-1)}\right)^{-(\ell-1)} \\
 &=& \frac{a+tk}{k+1}\ \frac{\ell}{a+t(\ell-1)}\ \Theta(1),
\end{eqnarray*}
since $(1+x)^i=\exp(O(ix))=\Theta(1)$ when $x\ge 0$ and $|ix|\le 1$.
For $k\le m/2$, say, we have $\ell=m-k=\Theta(m)$ and
$a+t(\ell-1)=\Theta(a+m)=\Theta(m)$, so
$p_{k+1}/p_k=\Theta((a+tk)/(k+1))$. It follows that there exists a constant $c_0$ such
that for $k\le c_0 a$ we have $p_{k+1}/p_k\ge 1$, so
\begin{equation}\label{ksmall}
 \max_{k\le c_0a} p_k \le p_{\ceil{c_0a}}.
\end{equation}

Since $m=\Omega(a)$, we may choose $c_0$ small enough that $\ceil{c_0a}\le m/4$, say.
In proving \eqref{aim3}, we may assume by symmetry that $k\le m/2$.
Since $\sigma=o(m)$, we have $|\ceil{c_0a}-m/2|/\sigma\ge m/(4\sigma)\to\infty$, so 
in the light of \eqref{ksmall}, to prove
\eqref{aim3} it suffices to show that
\begin{equation}\label{aim3p}
 \hbox{If $(m/2-k)/\sigma\to\infty$ and $k\ge c_0 a$ then $p_k=o(1/\sigma)$.}
\end{equation}

From this point our aim is to prove \eqref{aim2}, \eqref{aim1} and \eqref{aim3p}.
Since all three statements only involve $k=k(n)$ such that $k,\ell=\Omega(a)$,
from now on we impose this condition. In this case, from \eqref{pk} and Stirling's formula we have
\[
 p_k\sim \frac{a}{2\sqrt{2\pi}}\ \frac{m^m}{k^k\ell^\ell} \ \sqrt{\frac{m}{k\ell}}
 \ \frac{tm+2a}{(tk+a)(t\ell+a)}\ \frac{(tk+a)^k(t\ell+a)^\ell}{(tm+2a)^m}.
\]
Roughly speaking, we shall write this expression as a polynomial factor times
an exponential factor. Then we expand the function inside the exponential around $k=m/2$
to see that $p_k$ is small when $k$ is far from $m/2$, and does not change too rapidly
when $k$ is close to $m/2$. The complication is that the polynomial factor `blows up'
as $k/m$ approaches $0$ or $1$, and it is only the condition $m=o(a^2)$ that ensures
that this `blow up' is beaten by the exponential factor.

Setting
\[
  x=k/m \hbox{\quad and\quad} \beta=a/(tm),
\]
and noting that by assumption $\beta=O(1)$, we have
\begin{eqnarray}
 p_k &\sim& \frac{a}{2\sqrt{2\pi}}\ \sqrt{\frac{1}{x(1-x)m}}\ \frac{1}{tm} \ \frac{1+2\beta}{(x+\beta)(1-x+\beta)}
 \nonumber \\
&&\hskip 3cm \left( x^{-x}(1-x)^{-(1-x)} \frac{(x+\beta)^x(1-x+\beta)^{1-x}}{1+2\beta}\right)^m\nonumber \\
 &=& \frac{a(1+2\beta)}{2\sqrt{2\pi} t m^{3/2}} f(x) \exp(-m g(x))\nonumber \\
 &=& \frac{c}{\sigma} f(x)\exp(-m g(x)),\label{cfg}
\end{eqnarray}
where $c=(1+2\beta)/(2t\sqrt{2\pi})=\Theta(1)$ is independent of $k$,
\begin{equation}\label{fdef}
 f(x) = x^{-1/2}(1-x)^{-1/2}(x+\beta)^{-1}(1-x+\beta)^{-1},
\end{equation}
and
\[
  g(x) = x\log x+(1-x)\log(1-x) -x\log(x+\beta)-(1-x)\log(1-x+\beta) +\log(1+2\beta).
\]

It is easy to see that $g(1/2)=0$. Moreover,
\[
 g'(x) = \log x-\log(1-x)-\log(x+\beta)+\log(1-x+\beta)+\frac{\beta}{x+\beta}-\frac{\beta}{1-x-\beta}
\]
is also zero at $x=1/2$, and (after a little calculation) we see that
\begin{equation}\label{g''}
 g''(x) = \beta^2 \left(\frac{1}{x(x+\beta)^2}+\frac{1}{(1-x)(1-x+\beta)^2}\right) >0.
\end{equation}
Since $\beta=O(1)$, for $1/8\le x\le 7/8$, say,
the bracket in \eqref{g''} is uniformly $\Theta(1)$,
so we have $g''(x)=\Theta(\beta^2)$. Integrating twice, we see
that for $x\in [1/8,7/8]$ we have
\begin{equation}\label{g'g}
 |g'(x)| = \Theta( \beta^2|x-1/2|) \hbox{\quad and\quad} g(x) = \Theta(\beta^2(x-1/2)^2).
\end{equation}

Recalling that $\beta=a/(tm)$ and $\sigma=m^{3/2}/a$, note that
\begin{equation}\label{bsm}
 \beta^2\sigma^2/m^2 = \frac{a^2}{t^2m^2} \frac{m^3}{a^2}\frac{1}{m^2} = \frac{1}{t^2m} =\Theta(m^{-1}).
\end{equation}
Let $k_1$ and $k_2$ satisfy $k_i=m/2+O(\sigma)$ and $k_1-k_2=o(\sigma)$, and set $x_i=k_i/m$.
Then $x_i=1/2+O(\sigma/m)$, and $x_1-x_2=o(\sigma/m)$. By the Mean Value Theorem,
there is some $\xi=1/2+O(\sigma/m)$ for which
\begin{multline*}
 |g(x_1)-g(x_2)| = |g'(\xi)||x_1-x_2| = O(\beta^2 |\xi-1/2| |x_1-x_2|) \\
 = o(\beta^2\sigma^2/m^2) = o(1/m),
\end{multline*}
from \eqref{g'g} and \eqref{bsm}.
From \eqref{fdef}, since $x_1,x_2\sim 1/2$
we have $f(x_1)\sim f(1/2)\sim f(x_2)$, and it follows from \eqref{cfg}
that $p_{k_1}\sim p_{k_2}$, proving \eqref{aim2}.
For \eqref{aim1}, simply note that $g(x)\ge 0$ always, while if $k=m/2+O(\sigma)$ then
$x=k/m$ satisfies $x=1/2+O(\sigma/m)=1/2+o(1)$, so $x$ is bounded away from $0$ and $1$
and \eqref{fdef} gives $f(x)=O(1)$. Hence \eqref{cfg} gives $p_k=O(1/\sigma)$, proving
\eqref{aim1}.

Finally, we turn to the proof of \eqref{aim3p}, considering $k$ `far' from $m/2$,
but not too close to $0$ or to $m$. First, note that since $\beta=O(a/m)$ and, by assumption,
$m=o(a^2)$, we have
\[
 \beta^2m\to\infty.
\]
Let $c_0 a\le k\le m/2$ with $(m/2-k)/\sigma\to\infty$ and set $x=k/m$,
so $x<1/2$ and $(1/2-x)/(\sigma/m)\to\infty$. If $x\ge 1/8$ then $f(x)=\Theta(1)$ while from \eqref{g'g}
we have $g(x)=\Omega(\beta^2)$ and hence $m g(x)\to\infty$.
Thus \eqref{cfg} gives $p_k=o(1/\sigma)$, as required.

Suppose instead that $x<1/8$; note that $x=k/m\ge c_0a/m=c_1\beta$, where $c_1=c_0t$ is a positive constant.
For $y\ge c_1\beta$ we have $\beta=O(y)$ and hence $y^{-1}(y+\beta)^{-2}=\Omega(y^{-3})$,
so in this range \eqref{g''} gives $g''(y)\ge c\beta^2 y^{-3}$ for some constant $c>0$. It follows easily that
there is a constant $c'$ such that for $c_1\beta\le x\le 1/8$ we have $g(x)\ge c'\beta^2 x^{-1}$.
[Indeed, for $c_1\beta\le y\le 1/4$ we have
$-g'(y)=\int_{y}^{1/2}g''(z) \dz \ge \int_{y}^{2y}g''(z) \dz =\Omega(\beta^2 y^{-2})$,
and then $g(x)=\int_{x}^{1/2}(-g'(y)) \dy \ge \int_{x}^{2x} (-g'(y)) \dy = \Omega(\beta^2 x^{-1})$.]
Hence, for $c_1\beta\le x\le 1/8$ we have
\[
 f(x)\exp(-m g(x)) = O(x^{-3/2})\exp(-m g(x)) = O(x^{-3/2} \exp(-c'\beta^2 m x^{-1})).
\]
Since $\beta^2m\to\infty$, it follows that $f(x)\exp(-m g(x))\to 0$ uniformly in this range, which
with \eqref{cfg} gives $p_k=o(1/\sigma)$, completing the proof of \eqref{aim3p} and hence of
the lemma.
\end{proof}

With a small amount of further work, the argument above extends to show that (under the given
assumptions) $Y_{m,a}$ satisfies a Gaussian local limit theorem. We shall not need this,
so we omit the details.

\section{Discrete duality}\label{sec_dual}

Recall that $\Hrnp$ denotes the random $r$-uniform hypergraph on $[n]$ in which each
of the $\binom{n}{r}$ possible edges is present independently with probability $p$.
As in the introduction, we write
$p=p(n)$ as $\lambda(n)(r-2)!n^{-r+1}$,
so $\la=1$ corresponds to the critical point of the phase transition.
More generally, for any $r$, $n$ and $p$ we call
\begin{equation}\label{bfdef}
 \lambda= pn^{r-1}/(r-2)! 
\end{equation}
the \emph{branching factor} of $\Hrnp$. For $\la>1$ recall that
$\rho_{r,\la}$, defined in \eqref{rldef},
is the survival probability of a certain branching process associated to $\Hrnp$. In
particular, when $r=2$ this process is just a Galton--Watson process with a Poisson
offspring distribution with mean $\la$; we write $\rho_\la=\rho_{2,\la}$
for its survival probability.

Given any $\la>1$, define $\las<1$, the parameter \emph{dual} to $\la$, by
\begin{equation}\label{lasdef}
 \las e^{-\las} = \la e^{-\la}.
\end{equation}
It is easy to check that $\las=\la(1-\rho_\la)$, where $\rho_\la=\rho_{2,\la}$,
and that for $\la>1$ with $\la=O(1)$ we have
\begin{equation}\label{lasrate}
 \las=1-\Theta(\la -1) \hbox{\quad and\quad} \las=\Theta(1).
\end{equation}
In other words, for any $A>0$ there exist $c, C>0$ such that
$\la\in (1,A]$ implies $(1-\las)/(\la-1)\in [c,C]$ and $\las\in [c,1)$ (recall that $\las<1$ by definition).
The second, crude bound in \eqref{lasrate} is only relevant when $\la$ is large.

In the regime we are interested in, we have $\lambda=1+\eps$ with $\eps=\eps(n)$
bounded and $\eps^3n\to\infty$, so by the results of~\cite{KL_giant,BR_hyp},
$\Hrnp$ is supercritical.
Defining $\delta=\delta(n)\ge (\eps^3n)^{-1/3}$ and $\range = \range_n= [(1-\delta)\rho_{r,\la}n, (1+\delta)\rho_{r,\la}n]$
as in \eqref{rangedef}, by \eqref{rangewvhp} we have
\begin{equation}\label{rangewhp}
 \Pr(L_1\in\range) = 1-O(1/(\eps^3n)) = 1-o(1).
\end{equation}
We shall only consider possible values of $L_1$ lying in $\range$. We start
with a simple calculation, showing that if $s\in\range$ then $\Hrnsp$ is subcritical
(but not too strongly so).

\begin{lemma}\label{dual1}
Under our Weak Assumption~\ref{A1}, for any $s=s(n)\in\range$,
the branching factor $\la'=\la(\Hrnsp)$ of the random hypergraph $\Hrnsp$
satisfies $\la'=1-\Theta(\eps)$ and $\la'=\Theta(1)$.
\end{lemma}
\begin{proof}
Let $\mu_n=\rho_{r,\la}n$. Ignoring the fact that $\mu_n$ need not be an integer,
if we define the branching factor $\la(\Hrnsop)$ by \eqref{bfdef}, with
$n-\mu_n$ in place of $n$, 
then
\[
 \la(\Hrnsop) = (1-\mu_n/n)^{r-1}\la = (1-\rho_{r,\la})^{r-1}\la = (1-\rho_\la)\la = \las,
\]
which is $1-\Theta(\eps)$ by \eqref{lasrate}. 
For $s\in\range$ we have $(n-s)/(n-\mu_n)=1+O(\delta\eps)=1+o(\eps)$, so, since $r$ is constant,
$\la'=\la(\Hrnsp)=(1+o(\eps))^{r-1}\la(\Hrnsop)=1-\Theta(\eps)$ also.
To see that $\la'=\Theta(1)$ (i.e., is bounded away from zero), recall from \eqref{rangecont}
that $s\in\range$ implies $s\le (1-c)n$ for some constant $c>0$. Then
$\la' = (1-s/n)^{r-1}\la \ge c^{r-1}\la \ge c^{r-1}$.
\end{proof}
Note that here we do not really need $\delta$ to tend to zero:
it would suffice to assume that $\delta$
is at most some small constant depending on the upper bound on $\eps$.

A simple consequence of the fact that $\Hrnsp$ is subcritical is that
it is unlikely to contain a component with $s$ or more vertices. We state
a convenient form of this result rather than the strongest version possible.

\begin{lemma}\label{lccrit}
Under our Weak Assumption~\ref{A1}, for any $s=s(n)\in\range$, whp $L_1(\Hrnsp)<n^{2/3}<s$.
\end{lemma}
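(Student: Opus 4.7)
The plan is as follows. The second inequality $n^{2/3}<s$ is immediate: any $s\in\range$ satisfies $s\ge(1-\delta)\rho_{r,\la}n=\Theta(\eps n)$ by~\eqref{newrasympt}, and the hypothesis $\eps^3n\to\infty$ forces $\eps\gg n^{-1/3}$, so $\eps n\gg n^{2/3}$.

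For the first inequality $L_1(\Hrnsp)<n^{2/3}$, I would apply Lemma~\ref{dual1} to view $\Hrnsp$ as a subcritical random hypergraph with branching factor $\la'=1-\Theta(\eps)$, and then use a sharp subcritical tail bound on the size of the component $C_v$ of a fixed vertex~$v$. Specifically, exploring $C_v$ by breadth-first search, the number of `active' vertices evolves as a random walk with negative drift $-\Theta(\eps)$ and bounded step sizes, and the component size is its hitting time of~$0$. Standard exponential-martingale/ballot-type estimates (of the type underlying the $L_2$ bound in Theorem~\ref{thsupertail}) then give
\[
 \Pr\bb{|C_v(\Hrnsp)|\ge k}\le Ck^{-1/2}\exp(-c\eps^2k)
\]
for absolute constants $C,c>0$ and all $k\ge 1$ in our regime; the $k^{-1/2}$ prefactor is the classical random-walk correction (reflecting the Galton--Watson formula $\Pr(|T|=k)\sim Ck^{-3/2}\exp(-ck\eps^2)$ for the Poisson tree dual to $\Hrnsp$).

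Given this, the conclusion follows by a `counting-components' Markov bound. Let $X_k$ denote the number of components of $\Hrnsp$ of order at least $k$. Since each such component contains at least $k$ vertices, summing over vertices and dividing by $k$ gives
\[
 \E[X_k]\le\frac{n}{k}\,\Pr\bb{|C_v(\Hrnsp)|\ge k}\le\frac{Cn}{k^{3/2}}\exp(-c\eps^2 k).
\]
Choosing $k=\ceil{n^{2/3}}$ and using $\eps^2k=(1+o(1))(\eps^3n)^{2/3}$, the right-hand side becomes $O\bb{\exp(-c(\eps^3n)^{2/3})}=o(1)$, and $\Pr(L_1(\Hrnsp)\ge n^{2/3})\le\E[X_{\ceil{n^{2/3}}}]$ then gives the result.

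The dividing-by-$k$ trick is essential: a naive union bound $n\,\Pr(|C_v|\ge n^{2/3})$ yields only $Cn^{2/3}\exp(-c(\eps^3n)^{2/3})$, which does not tend to zero when $\eps^3n$ grows more slowly than $(\log n)^{3/2}$. The main obstacle is therefore establishing the subcritical tail bound on $|C_v|$ with its sharp $k^{-1/2}$ prefactor; once this is in place, the rest of the argument is routine.
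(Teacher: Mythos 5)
Your proposal is correct and self-consistent, but it takes a genuinely different route from the paper. The paper's proof is a one-liner: it invokes the known whp bound $L_1(\Hrmp)\le c_r\eta^{-2}\log(\eta^3 m)$ for subcritical hypergraphs with branching factor $1-\eta$ (citing Karo\'nski--\L uczak \cite[Theorem 6]{KL_giant} or \cite[Theorem 2]{BR_hyp2}), combines this with Lemma~\ref{dual1} to get $L_1(\Hrnsp)=o(m^{2/3})<n^{2/3}$ whp, and finishes with the same observation that $s=\Theta(\eps n)\gg n^{2/3}$. Your approach instead re-derives a weaker (but still sufficient) bound $L_1<n^{2/3}$ from first principles, via the subcritical component-tail estimate $\Pr(|C_v|\ge k)\le Ck^{-1/2}\exp(-c\eps^2 k)$ followed by the dividing-by-$k$ first-moment trick. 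Your observation about the $k^{-1/2}$ prefactor and the failure of the naive union bound is accurate and shows a good grasp of why the regime $\eps^3 n\to\infty$ but possibly $\eps^3 n = o((\log n)^{3/2})$ is delicate; note that this is also the reason the paper's cited bound needs to be $O(\eta^{-2}\log(\eta^3 m))$ rather than $O(\eta^{-2}\log m)$. The one thing you should flag more loudly is that the tail estimate $\Pr(|C_v|\ge k)\le Ck^{-1/2}\exp(-c\eps^2 k)$ with the \emph{polynomial} prefactor is itself a non-trivial lemma for the hypergraph exploration process: it goes beyond the Chernoff-type exponential bounds that give $\exp(-c\eps^2 k)$ alone, and requires either a ballot-type/local CLT argument for the underlying random walk or a careful comparison to the Borel tail of the dual Galton--Watson process. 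In effect you have shifted the burden from one cited lemma (the paper's $O(\eta^{-2}\log(\eta^3 m))$ bound) to another (the sharp component tail), so the argument is not really more elementary than the paper's, just differently packaged. Both paths are valid.
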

\begin{proof}
From either Karo\'nski and \L uczak~\cite[Theorem 6]{KL_giant}
or~\cite[Theorem 2]{BR_hyp2} (which gives a better probability bound but a worse constant
$c_r$), there is a constant $c_r>0$ such that if $\Hrmp$ has branching 
factor $1-\eta$ where $\eta^3m\to\infty$, then whp
\[
 L_1(\Hrmp)\le c_r\eta^{-2}\log(\eta^3m) = o(m^{2/3}).
\]
For $s\in\range$, by Lemma~\ref{dual1} the branching factor of $\Hrnsp$ is $1-\eta$
with $\eta=\Theta(\eps)$. Since $m=n-s=\Theta(n)$ and $\eps^3n\to\infty$, we have $\eta^3m\to\infty$,
so whp $L_1(\Hrmp)<m^{2/3}<n^{2/3}$.
The result follows since $s=\Theta(\eps n)$, so $s/n^{2/3}\to\infty$ and in particular $s>n^{2/3}$
if $n$ is large enough.
\end{proof}

Let $\cL_1$ be the component of $\Hrnp$ with the most vertices, if there is a unique
such component. In the case of ties we order (the vertex sets of) possible components
arbitrarily (e.g., by the lowest numbered vertex present), and use this order
to break the tie. Of course $|\cL_1|=L_1$.
The following explicit version of the discrete duality principle
says that we may treat the graph outside $\cL_1$ as a subcritical instance of
the same hypergraph model. We write $\cH_s$ for the set of all labelled
$r$-uniform hypergraphs with exactly $s$ vertices. We always assume
implicitly that any conditional probability is defined: i.e., if the event
being conditioned on has probability $0$, there is nothing to prove.

\begin{lemma}\label{dual2}
Suppose that our Weak Assumption~\ref{A1} holds,
and define $\range=\range_n$ as in \eqref{rangedef}.
Let $\cQ$ be any isomorphism invariant property of hypergraphs,
and $f$ any isomorphism invariant function from hypergraphs to the non-negative reals.
Then, for any $s=s(n)\in\range$ and any $\cP=\cP(n) \subset \cH_s$, we have
\[
 \Pr\bb{\Hrnp \setminus \cL_1 \text{ has }\cQ \mid \cL_1\in\cP} \le (1+o(1)) \Pr( \Hrnsp \text{ has }\cQ)
\]
and
\[
 \E\bb{f(\Hrnp \setminus \cL_1) \mid \cL_1\in\cP} \le (1+o(1)) \E( f(\Hrnsp) ),
\]
as $n\to\infty$,
with the error terms uniform over all $s\in \range$ and $\cP\subset\cH_s$.
\end{lemma}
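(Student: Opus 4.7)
The plan is to reduce to the case of a single labelled candidate for $\cL_1$ and exploit independence of disjoint edge sets. First I would fix $H_0\in\cP$ with vertex set $S\subset[n]$, $|S|=s$, and decompose the edges of $\Hrnp$ into three independent classes: those inside $S$, those meeting both $S$ and $[n]\setminus S$ (``cross edges''), and those inside $[n]\setminus S$. Call the third class $H'$; up to the obvious bijection $[n]\setminus S\to[n-s]$ it has the same distribution as $\Hrnsp$, and is independent of the first two classes. The event $\{\cL_1=H_0\}$ is the conjunction of (i) the edges inside $S$ coincide with $E(H_0)$, (ii) no cross edge is present, so $S$ is a union of components, and (iii) a condition on $H'$ alone, namely that no component of $H'$ has more than $s$ vertices and that ties of size exactly $s$ are broken in favour of $H_0$.

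Next, let $\cE=\{L_1(H')<s\}$; this event is measurable with respect to $H'$ alone and, by Lemma~\ref{lccrit}, satisfies $\Pr(\cE)=1-o(1)$ uniformly in $s\in\range$. When $\cE$ occurs, (iii) is automatic, since $H_0$ is then strictly larger than every component of $H'$ and wins with no help from tie-breaking. Using the independence of the three edge classes and isomorphism invariance of $\cQ$, I would write
\[
 \Pr\bb{\cL_1=H_0,\,H'\in\cQ} \le p_{\text{in}}(H_0)\, p_{\text{cross}}(S)\, \Pr\bb{\Hrnsp\in\cQ},
\]
where $p_{\text{in}}(H_0)$ and $p_{\text{cross}}(S)$ denote the probabilities of (i) and (ii), and correspondingly
\[
 \Pr\bb{\cL_1=H_0} \ge p_{\text{in}}(H_0)\, p_{\text{cross}}(S)\, \Pr(\cE) = (1-o(1))\, p_{\text{in}}(H_0)\, p_{\text{cross}}(S).
\]
Dividing the two inequalities gives $\Pr(H'\in\cQ\mid\cL_1=H_0)\le (1+o(1))\Pr\bb{\Hrnsp\in\cQ}$, with $o(1)$ uniform in $H_0\in\cP$ (it depends only on $s$).

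Finally, summing this last inequality against the weights $\Pr(\cL_1=H_0)$ over $H_0\in\cP$ and dividing by $\Pr(\cL_1\in\cP)$ yields the first claim. The expectation bound is proved identically: $\mathbf{1}_{\cQ}$ is replaced throughout by the nonnegative function $f$, and $\E\bb{f(H')\mathbf{1}_{\text{(iii)}}}\le \E\bb{f(H')}$ is used in place of the corresponding inequality for probabilities. The one delicate point, such as it is, is that $\{\cL_1=H_0\}$ genuinely depends on the labels outside $S$ through the tie-breaking rule, so (iii) is not a simple ``$L_1(H')\le s$'' statement; passing to the strictly stronger event $\cE$ finesses this at the price of the $(1-o(1))$ factor, which Lemma~\ref{lccrit} absorbs uniformly over $s\in\range$ and $H_0\in\cP$.
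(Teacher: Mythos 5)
Your proof is correct and takes essentially the same route as the paper's: fix a single $H_0\in\cP$, observe that conditionally on $\cL_1=H_0$ the remainder is $\Hrnsp$ conditioned on an event of probability $1-o(1)$ by Lemma~\ref{lccrit}, and bound the conditional probability accordingly. The only difference is presentational — you write out the independent product $p_{\text{in}}(H_0)\,p_{\text{cross}}(S)\,\Pr(\cdot)$ explicitly and then cancel it, whereas the paper directly identifies the conditional law of $\Hrnp\setminus\cL_1$ as $\Hrnsp$ conditioned on $\cE$ and bounds $\Pr(\cdot\mid\cE)\le\Pr(\cdot)/\Pr(\cE)$.
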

The most natural case here is $\cP=\cH_s$, in which case we are simply conditioning
on the event $L_1=s$. Often we shall take $\cP$ to be the set of hypergraphs with $s$
vertices and nullity $t$; then we are conditioning on the event $\{L_1=s, N_1=t\}$.
 
\begin{proof}
Although
we have emphasized the uniformity of the error terms for clarity, this uniformity
is automatic, considering the worst-case choice of $s=s(n)$ and $\cP=\cP(n)$.

Without loss of generality $\cP$ consists of a single hypergraph $H_s$
with vertex set $S\subset [n]$ with $|S|=s$.
From the definitions of $\Hrnp$ and of $\cL_1$, the conditional distribution
of $\Hrnp\setminus \cL_1$ given that $\cL_1=H_s$ is that
of the random hypergraph $H'=\Hrnsp$ 
on the vertex set $[n]\setminus S$ conditioned on the event $\cE$ that

\smallskip
(i) $H'$ contains no component with more than $s$ vertices, and

(ii) $H'$ has no $s$-vertex component that beats $H_s$ in the 
tie-break order used in defining $\cL_1$.

\smallskip\noindent
By Lemma~\ref{lccrit}, $\Pr(\cE)=1-o(1)$. Hence,
\begin{multline*}
  \Pr\bb{\Hrnp \setminus \cL_1 \text{ has }\cQ \mid \cL_1=H_s }
  = \Pr\bb{H' \hbox{ has }\cQ \mid \cE} \\
\le \frac{ \Pr(H' \hbox{ has }\cQ)}{\Pr(\cE)}
 = (1+o(1)) \Pr(H' \hbox{ has }\cQ),
\end{multline*}
proving the first statement. For the second, argue similarly, or express
$\E(f(H))$ as $\int_0^\infty \Pr(f(H)\ge t)\mathrm{d}t$
and apply the first statement.
\end{proof}

A variant of the argument above gives the following result, which
may be seen as an extension of an observation of Karo\'nski and {\L}uczak~\cite[p.\ 133]{KL_giant}.
By a \emph{property} of hypergraphs we simply mean a set of hypergraphs; we do not assume that this set is
closed under isomorphism. As usual, let $\cL_1$ be a component of $\Hrnp$ with the maximal
number of vertices, chosen according to any fixed rule if there is a tie.

\begin{lemma}\label{lbigep}
Let $\cQ_s$ be any property of $s$-vertex hypergraphs,
and let $N_s$ be the expected number of components of $\Hrnp$ having property $\cQ_s$.
Let $\ubig$ be the event that $\Hrnp$ has at most one component with more than $n^{2/3}$ vertices,
and set $\cA_s=\{N_s>0\}\cap \ubig$ and $\cB_s=\{N_s>0\}\cap \ubig^\cc$.
Under our Weak Assumption~\ref{A1} we have
\begin{equation}\label{ep}
 \Pr(\cL_1\in\cQ_s)\sim \Pr(\cA_s) \sim \E[N_s]
\end{equation}
and
\begin{equation}\label{ep2}
 \Pr(\cB_s) = o(\Pr(\cL_1\in\cQ_s)),
\end{equation}
uniformly over all $s\in \range$ and all properties $\cQ_s$, where $\range$ is defined in \eqref{rangedef}.
\end{lemma}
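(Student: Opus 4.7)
The plan is to exploit the structural observation that, since $s\in\range$ and $\eps^3n\to\infty$, we have $s=\Theta(\eps n)\gg n^{2/3}$ for all large $n$. Hence any component of $\Hrnp$ having property $\cQ_s$ (which necessarily has exactly $s$ vertices) is a ``big'' component, of order exceeding $n^{2/3}$. On the event $\cU$ there is at most one such big component, so $N_s\in\{0,1\}$ on $\cU$, and moreover $\{N_s>0\}\cap\cU=\{\cL_1\in\cQ_s\}\cap\cU$, because any component with property $\cQ_s$ must be the unique big component, which in turn is $\cL_1$. In particular $\cA_s=\{\cL_1\in\cQ_s\}\cap\cU$ and $\E[N_s\ind{\cU}]=\Pr(\cA_s)$.

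For the first part of \eqref{ep}, namely $\Pr(\cL_1\in\cQ_s)\sim\Pr(\cA_s)$, it suffices to show $\Pr(\cU^\cc\mid\cL_1\in\cQ_s)=o(1)$ uniformly in $s$ and $\cQ_s$. On $\{\cL_1\in\cQ_s\}$ the component $\cL_1$ has $s>n^{2/3}$ vertices, so $\cU^\cc$ is equivalent to $\Hrnp\setminus\cL_1$ containing a component of order more than $n^{2/3}$. I would apply Lemma~\ref{dual2} with $\cP=\cQ_s$ and the isomorphism-invariant property $\cQ=$``has a component of order more than $n^{2/3}$'', followed by Lemma~\ref{lccrit}, giving
\[
 \Pr(\cU^\cc\mid\cL_1\in\cQ_s)\le(1+o(1))\Pr\bb{L_1(\Hrnsp)>n^{2/3}}=o(1)
\]
uniformly for $s\in\range$.

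For the second part, $\E[N_s]\sim\Pr(\cA_s)$, I would use linearity of expectation. Setting $\alpha=\alpha(s):=\Pr(L_1(\Hrnsp)>n^{2/3})=o(1)$, I write
\[
 \E[N_s\ind{\cU^\cc}]=\sum_{H}\Pr\bb{H\text{ is a component of }\Hrnp\text{ and }\cU^\cc},
\]
the sum ranging over all $H\in\cQ_s$ with $V(H)\subset[n]$. For each such $H$ on vertex set $S$, the event $\{H\text{ is a component}\}$ is determined by the edges of $\Hrnp$ meeting $S$, so the restriction of $\Hrnp$ to $[n]\setminus S$ is independent of it and distributed as $\Hrnsp$. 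Since $|H|=s>n^{2/3}$, on $\{H\text{ is a component}\}$ the event $\cU^\cc$ is precisely $\{\Hrnp\setminus H\text{ has a component of order }>n^{2/3}\}$, which has probability $\alpha$. Summing gives $\E[N_s\ind{\cU^\cc}]=\alpha\E[N_s]$, so, combined with $\E[N_s]=\Pr(\cA_s)+\E[N_s\ind{\cU^\cc}]$, I obtain $\E[N_s]=\Pr(\cA_s)/(1-\alpha)\sim\Pr(\cA_s)$. Finally, $\cB_s\subseteq\{N_s>0\}\cap\cU^\cc$, so $\Pr(\cB_s)\le\E[N_s\ind{\cU^\cc}]=\alpha\E[N_s]=o(\Pr(\cL_1\in\cQ_s))$, which is \eqref{ep2}.

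There is no serious obstacle here; the main point requiring care is that every error term be uniform over arbitrary $\cQ_s$ and $s\in\range$. This is the case because $\alpha$ depends only on $s$ (not on $\cQ_s$) and tends to $0$ uniformly for $s\in\range$ by Lemma~\ref{lccrit}, while the $(1+o(1))$ factor in Lemma~\ref{dual2} is similarly uniform in $\cP$ and $s$.
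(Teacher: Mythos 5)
Your proposal is correct and rests on the same central computation as the paper: expressing $\E[N_s\ind{\cU^\cc}]$ by linearity of expectation, using independence of the restriction to $[n]\setminus S$ to factor out $\Pr(L_1(\Hrnsp)>n^{2/3})$, and invoking Lemma~\ref{lccrit}. The one structural difference is that you prove $\Pr(\cL_1\in\cQ_s)\sim\Pr(\cA_s)$ separately via Lemma~\ref{dual2}, whereas the paper avoids this step entirely: since $\cA_s\subseteq\{\cL_1\in\cQ_s\}\subseteq\{N_s>0\}$, the chain $\Pr(\cA_s)\le\Pr(\cL_1\in\cQ_s)\le\E[N_s]$ makes that relation automatic once $\E[N_s]\sim\Pr(\cA_s)$ is established, so your first paragraph is correct but redundant. (On the other hand, your exact identity $\E[N_s]=\Pr(\cA_s)/(1-\alpha)$, coming from noting that $N_s\in\{0,1\}$ on $\cU$ so that $\E[N_s\ind{\cU}]=\Pr(\cA_s)$ exactly, is marginally cleaner than the one-sided inequality the paper uses.)
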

Note that $\ubig$ holds whp by (for example) the second statement of Theorem~\ref{thsupertail}.
\begin{proof}
Clearly
\begin{equation}\label{NS1}
 \E[N_s]\ge \Pr(N_s>0) \ge \Pr(\cL_1\in\cQ_s)  \ge \Pr(\cA_s).
\end{equation}
Let $N^+\ge N_s$ denote the number of components of $\Hrnp$ with more than $n^{2/3}$ vertices.
If $\cA_s$ holds, then $N_s=1$. If $\cA_s$ does not hold and
$N_s>0$, then $N^+\ge 2$. Hence
\[
 N_s\le \ind{\cA_s} + N_s \ind{N^+\ge 2}
\]
and, taking expectations,
\begin{equation}\label{NS2}
 \E[N_s] \le \Pr(\cA_s) + \E[N_s \ind{N^+\ge 2}].
\end{equation}
For $S\subset [n]$ with $|S|=s$, let $\cQ_S$ be the event that $S$ is the vertex
set of a component of $\Hrnp$ having property $\cQ_s$. Then
\begin{multline*}
 \E[N_s \ind{N^+\ge 2}] = \E \sum_{S\::\:|S|=s} \ind{\cQ_S} \ind{N^+\ge 2}
= \sum_S \Pr(\cQ_S \cap \{N^+\ge 2\})\\
 = \sum_S \Pr(\cQ_S) \Pr(N^+\ge 2\mid \cQ_S)
 = \sum_S \Pr(\cQ_S) \Pr(L_1(\Hrnsp)>n^{2/3})\\
 = \E[N_s] \Pr(L_1(\Hrnsp)>n^{2/3}) = o(\E[N_s]),
\end{multline*}
by Lemma~\ref{lccrit}.

From \eqref{NS2} we now obtain $\Pr(\cA_s)\ge (1-o(1))\E[N_s]$, which 
combined with \eqref{NS1} completes the proof of \eqref{ep}.
The final statement \eqref{ep2} follows since $\Pr(\cB_s)=\Pr(N_s>0)-\Pr(\cA_s) \le \E[N_s]-\Pr(\cA_s)$.
\end{proof}

\section{Trees, paths and cycles outside the giant component}\label{sec_sub}

Throughout this section we assume our Weak Assumption~\ref{A1}.
In other words we fix an integer $r\ge 2$ and a function
$p=p(n)=(1+\eps)(r-2)!n^{-r+1}$ where $\eps=\eps(n)=O(1)$ and $\eps^3n\to\infty$.
We write $\la$ for $1+\eps$, which is the branching factor of $\Hrnp$ as defined in \eqref{bfdef}.

Our next lemma concerns trees outside the giant component.
As in Section~\ref{sec_dual} we consider the hypergraph $H'=\Hrmp$ where $m=n-s$ with $s\in\range$,
where $\range=\range_n$ is defined as in \eqref{rangedef}.

\begin{lemma}\label{smalltree}
Let $T_k$ denote the number of tree components of $H'=\Hrnsp$ with $k$ edges,
and $T^{(2)}_{k,\ell}$ the number of ordered pairs $(T,T')$
of distinct tree components of $H'$ with $e(T)=k$ and $e(T')=\ell$.
Then
\begin{equation}\label{mk}
 \mu_k = \E[T_k] = \Theta(n(k+1)^{-5/2})
\end{equation}
and
\begin{equation}\label{mkl}
 \mu_{k,\ell} = \E[T^{(2)}_{k,\ell}] = \mu_k\mu_\ell\bb{1+O(\eps(k+\ell)^2m^{-1})} \sim\mu_k\mu_\ell,
\end{equation}
uniformly in $0\le k$, $\ell\le 10/\eps^2$ and $s\in\range$.
\end{lemma}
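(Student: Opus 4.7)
The plan is a direct counting computation: I would write down $\mu_k$ exactly using Selivanov's formula, asymptotically evaluate it by Stirling, and then read off $\mu_{k,\ell}/(\mu_k\mu_\ell)$ from the analogous two-tree expression, in which most factors cancel.

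First, a tree component of $H'=\Hrmp$ on a specified $v$-subset $S\subset [m]$, where $v=1+(r-1)k$, arises precisely when $S$ carries one of $F_{1,k}^{(r)}=v^{k-1}\partit{k}{r-1}$ tree structures (Lemma~\ref{l_rtree}), the $k$ tree edges are present, and every other potential edge of $H'$ meeting $S$ is absent. Thus, writing $e_v=\binom{m}{r}-\binom{m-v}{r}$ for the number of potential edges hitting $S$,
\[
 \mu_k = \binom{m}{v}\, v^{k-1}\partit{k}{r-1}\, p^k\,(1-p)^{e_v-k}.
\]
I would simplify using $m^v p^k = m(\la')^k(r-2)!^k$ (where $\la':=pm^{r-1}/(r-2)!$ is the branching factor of $H'$, equal to $1-\Theta(\eps)$ by Lemma~\ref{dual1}), the identities $v!=v\cdot ((r-1)k)!$ and $(r-2)!^k/(r-1)!^k=(r-1)^{-k}$, the approximations $\binom{m}{v}\sim m^v/v!$ and $(1-p)^{e_v-k}\sim\exp(-pe_v)$ (both with multiplicative error $1+o(1)$, as justified below), and $pe_v\sim\la' v/(r-1)$. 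Stirling's formula then yields
\[
 \mu_k\sim\frac{m\,e^{-\la'/(r-1)}}{(r-1)^2\sqrt{2\pi}}\cdot\frac{(\la' e^{1-\la'})^k}{k^{5/2}}
\]
for $k\ge 1$; the case $k=0$ is just the isolated-vertex count $\mu_0=\Theta(n)$. Since $\la' e^{1-\la'}=1-\Theta(\eps^2)$, the factor $(\la' e^{1-\la'})^k$ is $\Theta(1)$ uniformly for $0\le k\le 10/\eps^2$, giving \eqref{mk}.

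For the second part, the analogous formula is
\[
 \mu_{k,\ell}=\binom{m}{v_1}\binom{m-v_1}{v_2}\,F_{1,k}^{(r)}F_{1,\ell}^{(r)}\,p^{k+\ell}\,(1-p)^{E-k-\ell},
\]
with $v_i=1+(r-1)k_i$ (writing $k_1=k$, $k_2=\ell$) and $E=\binom{m}{r}-\binom{m-v_1-v_2}{r}$. All tree-counting and $p$-factors cancel in the ratio, leaving
\[
 \frac{\mu_{k,\ell}}{\mu_k\mu_\ell}=\frac{\binom{m-v_1}{v_2}}{\binom{m}{v_2}}\cdot(1-p)^{-X},
\]
where by inclusion-exclusion $X=\binom{m}{r}-\binom{m-v_1}{r}-\binom{m-v_2}{r}+\binom{m-v_1-v_2}{r}$ counts edges hitting \emph{both} vertex sets. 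For $v_i=o(m)$ one obtains $X=v_1v_2\binom{m-2}{r-2}\bb{1+O((v_1+v_2)/m)}$ and hence $pX\sim\la' v_1v_2/m$, while $\binom{m-v_1}{v_2}/\binom{m}{v_2}=\exp\bb{-v_1v_2/m+O(v_1v_2(v_1+v_2)/m^2)}$. Combining,
\[
 \frac{\mu_{k,\ell}}{\mu_k\mu_\ell}=\exp\bb{(\la'-1)v_1v_2/m+O(v_1v_2(v_1+v_2)/m^2)}=1+O(\eps(k+\ell)^2/m),
\]
using $\la'-1=-\Theta(\eps)$ and the fact that, with $k+\ell=O(\eps^{-2})$, the residual $(k+\ell)^3/m^2$ error is dominated by the leading one. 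The asymptotic equivalence follows from $\eps(k+\ell)^2/m=O((\eps^3n)^{-1})\to 0$.

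The main obstacle will be purely bookkeeping: verifying that the multiplicative errors in $\binom{m}{v}\sim m^v/v!$, in $(1-p)^e\sim e^{-pe}$, and in the Stirling expansion of $k!$ are uniformly $1+o(1)$ over $0\le k,\ell\le 10/\eps^2$ and $s\in\range$. This is precisely where the hypothesis $\eps^3n\to\infty$ enters, via $v^2/m=O(\eps^{-4}/n)=O(\eps^{-1}/(\eps^3n))=o(1)$; otherwise no step beyond Selivanov's enumeration and Stirling is required.
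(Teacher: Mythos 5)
Your approach mirrors the paper's: you compute $\mu_k$ exactly from Selivanov's formula and read off $\mu_{k,\ell}/(\mu_k\mu_\ell)$ from the two-tree analogue, and your treatment of the ratio \eqref{mkl} is fine. But there is a genuine gap in your justification of \eqref{mk}. Your final paragraph asserts that $v^2/m=O(\eps^{-4}/n)=O(\eps^{-1}/(\eps^3n))=o(1)$, and this is false under the Weak Assumption: taking, say, $\eps^3n=\log n$ gives $\eps^{-4}/n=\eps^{-1}/(\eps^3n)=(n/\log n)^{1/3}/\log n\to\infty$. So for $k=\Theta(\eps^{-2})$ neither $\binom{m}{v}\sim m^v/v!$ nor $\exp(-pe_v)\sim\exp(-\la' v/(r-1))$ holds: the first has a multiplicative error $\exp(-v^2/(2m)+\cdots)$ coming from $(m)_v$, the second has a multiplicative error $\exp(+\la' v^2/(2m)+\cdots)$ coming from the second-order term of $e_v=\binom{m}{r}-\binom{m-v}{r}$, and these factors are individually unbounded in the range in question.

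The correct argument, as in the paper, is to \emph{not} discard these two quadratic terms. Carrying them both, the product is
\[
 \exp\Bigl(-\frac{v^2}{2m}\Bigr)\exp\Bigl(\frac{\la' v^2}{2m}\Bigr) = \exp\Bigl(-(1-\la')\frac{v^2}{2m}\Bigr),
\]
and it is only after this cancellation that smallness kicks in: $1-\la'=\Theta(\eps)$ (by Lemma~\ref{dual1}), $v^2/m=O(\eps^{-4}/n)$, so the residual exponent is $O(\eps^{-3}/n)=O\bigl(1/(\eps^3n)\bigr)=o(1)$. You should therefore keep the $\Theta(k^2/m)$ terms from both $(m)_v$ and $pt_{m,k}$ and exhibit this cancellation rather than asserting each approximation separately with $1+o(1)$ error. (A smaller issue: even granting your approximations, your precise asymptotic constant would be $e^{(1-\la')/(r-1)}$ rather than $e^{-\la'/(r-1)}$, from the factor $(1+1/((r-1)k))^{k-2}\to e^{1/(r-1)}$ in $v^{k-2}$; this does not affect the $\Theta$-claim of the lemma, but is worth noting.)
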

\begin{proof}
It suffices to fix sequences $k=k(n)$, $\ell=\ell(n)$ and $s=s(n)$ satisfying
$0\le k,\ell\le 10/\eps^2$ and $s\in\range_n$, and prove \eqref{mk} and \eqref{mkl}
for these sequences, where in principle the implicit constants above and in the proof
that follows may depend on the choice of the sequences.
The claimed uniform bounds follow by considering appropriate worst-case sequences.

Suppressing the dependence on $n$ as usual, fix sequences $k$, $\ell$ and $s$ as above, and let $m=n-s$.
Note that $m=\Theta(n)$; see \eqref{rangecont}.
We shall apply Lemma~\ref{l_rtree} with $a=1$; recall the notation
$\partit{k}{t}=(kt)!/(k! t!^k)$ used there.

Considering first the number of choices for the $k(r-1)+1$ vertices,
then the number of trees $T$ on the given vertex set,
and finally the probability that the edges of $T$ are present but no other edges incident with $T$ are, we have
\begin{equation}\label{muk}
 \mu_k = \binom{m}{k(r-1)+1} (k(r-1)+1)^{k-1} \partit{k}{r-1}
p^k (1-p)^{t_{m,k}-k},
\end{equation}
where
\[
 t_{m,k} = \binom{m}{r} - \binom{m-k(r-1)-1}{r}
\]
is the number of potential hyperedges on an $m$-vertex set meeting a given $(k(r-1)+1)$-vertex set at least once.
Postponing the evaluation of $\mu_k$ for the moment, if we write a similar formula for
$\mu_{k,\ell}$, then most terms agree with the corresponding terms in $\mu_k\mu_\ell$. Indeed,
writing $a$ for $k(r-1)+1$ and $b$ for $\ell(r-1)+1$, it is easy to see that
\begin{equation}\label{mklc}
 \frac{\mu_{k,\ell}}{\mu_k\mu_\ell} = 
 \binom{m-a}{b} \binom{m}{b}^{-1} (1-p)^{-t_{m,k,\ell}},
\end{equation}
where $t_{m,k,\ell}$ is the number of potential hyperedges meeting both a given set of $a$
vertices and a given disjoint set of $b$ vertices.
Note that
\[
 t_{m,k,\ell}=ab\binom{m}{r-2}+O((a+b)^3m^{r-3}) = ab\frac{m^{r-2}}{(r-2)!} + O((a+b)^3m^{r-3}).
\] 
Writing
\[
 \la' = p m^{r-1}/(r-2)!
\]
for the branching factor of $H'=\Hrmp$ (see \eqref{bfdef}), since $p=O(n^{-r+1})=O(m^{-r+1})$
it follows that
\[
 pt_{m,k,\ell} = \la' ab/m +O((a+b)^3m^{-2}).
\]
Since, crudely, $p=O(1/m)$ and $ab=O((a+b)^3)$,
from this it certainly follows that $p^2t_{m,k,\ell}=O((a+b)^3m^{-2})$, so
\begin{equation}\label{l1p}
 \log\left((1-p)^{-t_{m,k,\ell}}\right) = pt_{m,k,\ell}+O(p^2t_{m,k,\ell})
 = \la' ab/m +O((a+b)^3m^{-2}).
\end{equation}

By Lemma~\ref{dual1} we have
\begin{equation}\label{lapbds}
 \la'=1-\Theta(\eps) \hbox{\quad and\quad} \la'=\Theta(1).
\end{equation}
Using the formula $\binom{m-a}{b}/\binom{m}{b}=\exp(-ab/m+O((a+b)^3/m^2))$,
valid for $a,b\le m/3$, say, from \eqref{mklc}--\eqref{lapbds}
we see that
\[
 \log\left(\frac{\mu_{k,\ell}}{\mu_k\mu_\ell}\right)
 = \frac{(\la'-1)ab}{m} + O((a+b)^3m^{-2}) = O(\eps(a+b)^2m^{-1}) =o(1),
\]
since $a+b=O(\eps^{-2})=o(\eps m)$. This proves \eqref{mkl}.

Let us temporarily adopt the convention of writing $f\approx g$ for $f=\Theta(g)$.
Returning to $\mu_k$, for $k=0$ we have $\mu_k=m(1-p)^{t_{m,0}}$.
Since $m\approx n$, $t_{m,0}=\binom{m-1}{r-1}\approx n^{r-1}$ and $p \approx n^{-r+1}$,
it follows that $\mu_0 \approx n$, as required.
From now on suppose that $1\le k\le 10/\eps^2$.
Since $p^2t_{m,k}=O(p^2km^{r-1})=O(pk)$ and $pk=o(1)$, from \eqref{muk} we have
\begin{eqnarray*}
 \mu_k &\sim& m\binom{m-1}{k(r-1)}(k(r-1)+1)^{k-2} \frac{(k(r-1))!}{k!(r-1)!^k}p^k \exp(-pt_{m,k}) \\
 &\approx& m (m-1)_{k(r-1)} \frac{k^{k-2}(r-1)^{k-2}}{k!(r-1)!^k}p^k \exp(-pt_{m,k})\\
 &\approx& mk^{-2} (m-1)_{k(r-1)} \frac{k^k}{k!(r-2)!^k}p^k \exp(-pt_{m,k}),
\end{eqnarray*}
where, as before, $(x)_y$ denotes the falling factorial $x(x-1)\cdots(x-y+1)$.
For $y\le x/2$,
\[
 (x-1)_y=x^y\exp\left(-\frac{y^2}{2x}+O(y/x)+O(y^3/x^2)\right).
\]
Since $m\approx n$, $\eps^3n\to\infty$ and $k\le 10/\eps^2$,
both $k/m$ and $k^3/m^2$ are $o(1)$. Hence
\begin{eqnarray}
 \mu_k &\approx&  mk^{-2} \left(\frac{m^{r-1}p}{(r-2)!}\right)^k \frac{k^k}{k!}
 \exp\left(-pt_{m,k}-\frac{(r-1)^2k^2}{2m}\right) \nonumber \\
 &\approx& mk^{-5/2} (\la')^k  \exp\left(k-pt_{m,k}-\frac{(r-1)^2k^2}{2m}\right), \label{mklast}
\end{eqnarray}
since $k^k/k! \approx e^k/\sqrt{k}$.

Now
\begin{eqnarray*}
 t_{m,k} &=& \frac{m^r-(m-k(r-1))^r}{r!} +O(m^{r-1}) \\
 &=& \frac{rk(r-1)m^{r-1}-\binom{r}{2}k^2(r-1)^2m^{r-2}}{r!} + O(m^{r-1}+k^3m^{r-3})\\
 &=& \frac{km^{r-1}}{(r-2)!} - \frac{k^2(r-1)^2m^{r-2}}{2(r-2)!} +O(m^{r-1}).
\end{eqnarray*}
Since $p=\la'(r-2)!/m^{r-1}$, it follows that
\[
 pt_{m,k} = \la' k -\la' \frac{(r-1)^2k^2}{2m}+O(1).
\]
Thus, 
recalling that $1-\la'=O(\eps)$, that $k=O(\eps^{-2})$, and that $\eps^3m\to\infty$,
the term inside the exponential in \eqref{mklast} is
\[
 k(1-\la') -(1-\la')\frac{(r-1)^2k^2}{2m} +O(1) = k(1-\la')+O(1).
\]
Hence, from \eqref{mklast},
\[
 \mu_k \approx mk^{-5/2} (\la' e^{1-\la'})^k.
\]
From the second bound in \eqref{lapbds}, $\la'$ is bounded away from $0$.
Since $(1-x)e^x=\exp(O(x^2))$ when $0<x<1$ is bounded away from 1,
it follows that $(\la' e^{1-\la'})^k = \exp(O((1-\la')^2k)) = \exp(O(\eps^2k)) \approx 1$,
completing the proof of \eqref{mk}.
\end{proof}

\begin{corollary}\label{csmallt}
Suppose that our Weak Assumption~\ref{A1} holds, and define $\range=\range_n$ as in \eqref{rangedef}.
There is a constant $c>0$ such that, for any $s=s(n)\in\range$ and $t=t(n)\ge 0$,
\[
 \Pr\bb{ T_{\ceil{\eps^{-2}},2\ceil{\eps^{-2}}}( \Hrnp\setminus\cL_1) \le c \eps^3 n \mid L_1=s, N_1=t } = o(1),
\]
where $T_{k,k'}(H)$ denotes the number components of a hypergraph $H$
that are trees with between $k$ and $k'$ edges (inclusive).
\end{corollary}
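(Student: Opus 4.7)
My plan is to reduce to an unconditional statement about $\Hrnsp$ using the duality Lemma~\ref{dual2}, then perform a standard first/second moment computation using Lemma~\ref{smalltree} and apply Chebyshev.

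Set $K_0=\ceil{\eps^{-2}}$, $K_1=2\ceil{\eps^{-2}}$ and write $T(H)=T_{K_0,K_1}(H)$. The property $\{H:T(H)\le c\eps^3n\}$ is isomorphism-invariant. Let $\cP_{s,t}\subset\cH_s$ be the set of $s$-vertex hypergraphs of nullity $t$; then conditioning on $\{L_1=s,N_1=t\}$ is the same as conditioning on $\{\cL_1\in\cP_{s,t}\}$. By Lemma~\ref{dual2} (applied with this $\cP$), it therefore suffices to find a constant $c>0$ such that
\[
\Pr\bb{T(\Hrnsp)\le c\eps^3 n} = o(1)
\]
uniformly in $s\in\range$; the corollary then follows, with the conditioning on $t$ handled automatically by the freedom to choose any $\cP\subset\cH_s$.

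Writing $\mu_k=\E[T_k(\Hrnsp)]$, Lemma~\ref{smalltree} gives $\mu_k=\Theta(n(k+1)^{-5/2})$ uniformly for $s\in\range$ and $k$ in our range (which, for large $n$, is contained in $[0,10/\eps^2]$). Summing,
\[
\E[T(\Hrnsp)] = \sum_{k=K_0}^{K_1}\mu_k = \Theta\bb{\eps^{-2}\cdot n\eps^5} = \Theta(n\eps^3),
\]
so there is a constant $C>0$ with $\E[T(\Hrnsp)]\ge 2C\eps^3n$ for large $n$, uniformly in $s\in\range$. For the variance, since tree components of different sizes are automatically distinct, and since $\E[T_k^2]=\mu_k+\mu_{k,k}$,
\[
\Var\bb{T(\Hrnsp)} = \E[T(\Hrnsp)] + \sum_{k,\ell=K_0}^{K_1}\bb{\mu_{k,\ell}-\mu_k\mu_\ell}.
\]
By Lemma~\ref{smalltree}, $\mu_{k,\ell}-\mu_k\mu_\ell = O(\eps(k+\ell)^2 m^{-1})\,\mu_k\mu_\ell$. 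For $k,\ell\le 2\ceil{\eps^{-2}}$ we have $(k+\ell)^2=O(\eps^{-4})$ and $m=n-s=\Theta(n)$, so the relative error is $O((\eps^3 n)^{-1})=o(1)$. Hence
\[
\Var\bb{T(\Hrnsp)} = \E[T(\Hrnsp)] + o\bb{\E[T(\Hrnsp)]^2} = o\bb{\E[T(\Hrnsp)]^2},
\]
using $\E[T(\Hrnsp)]=\Theta(n\eps^3)\to\infty$. Chebyshev's inequality then gives $\Pr(T(\Hrnsp)\le C\eps^3 n)\le\Pr(|T(\Hrnsp)-\E[T(\Hrnsp)]|\ge\E[T(\Hrnsp)]/2)=o(1)$ uniformly in $s\in\range$. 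Combining with the duality reduction, we conclude with $c=C$.

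There is no real obstacle here once Lemmas~\ref{dual2} and~\ref{smalltree} are in place: the whole argument is a short second moment calculation, and the mild point to verify is simply that every $o(1)$ and $\Theta$ estimate is uniform in $s\in\range$ and in the choice of conditioning set $\cP_{s,t}$, which is exactly the uniformity built into those two lemmas.
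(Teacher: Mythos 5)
Your proposal is correct and follows essentially the same path as the paper's proof: reduce to an unconditional statement about $\Hrnsp$ via Lemma~\ref{dual2} with $\cP$ the set of $s$-vertex hypergraphs of nullity $t$, then compute $\E[T]=\Theta(\eps^3 n)$ and show $\Var(T)=o(\E[T]^2)$ using Lemma~\ref{smalltree}, and finish with Chebyshev. The only cosmetic difference is that the paper computes $\E[T(T-1)]=\sum_{k,\ell}\mu_{k,\ell}\sim\E[T]^2$ directly rather than decomposing the variance term by term.
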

\begin{proof}
We must be a little careful with the uniformity in this proof: the choice of $c$ is not allowed
to depend on $s=s(n)$ and $t=t(n)$.

Let $H'=\Hrnsp$ as before and, ignoring the rounding to integers,
let $T=T_{n,s}=T_{\eps^{-2},2\eps^{-2}}(H')$.
Defining $\mu_k$ and $\mu_{k,\ell}$ as in Lemma~\ref{smalltree}, by that lemma we have
\begin{equation}\label{Et}
 \E[T] = \sum_{k=\eps^{-2}}^{2\eps^{-2}} \mu_k = \Theta(\eps^{-2}n(\eps^{-2})^{-5/2}) = \Theta(\eps^3n),
\end{equation}
and
\[
 \E[T(T-1)] = \sum_{k=\eps^{-2}}^{2\eps^{-2}} \sum_{\ell=\eps^{-2}}^{2\eps^{-2}} \mu_{k,\ell} \sim
 \sum_{k,\ell}\mu_k\mu_\ell = \E[T]^2
\]
uniformly in the choice of $s=s(n)\in\range_n$.
Let $a>0$ be the implicit constant in the lower bound in \eqref{Et}, which does not depend
on $s$. Since $\E[T]\ge a\eps^3n\to\infty$,
we have $\E[T^2]=\E[T(T-1)]+\E[T]\sim \E[T]^2$. Hence, by Chebyshev's inequality,
$\Pr(T \ge a\eps^3n/2)=1-o(1)$ as $n\to\infty$, uniformly in $s=s(n)$.

The result follows by Lemma~\ref{dual2}, applied with $\cP$ the set of all $s$-vertex hypergraphs with nullity $t$.
\end{proof}

We shall need some further, simpler results about the part of $\Hrnp$ lying outside the giant component.
The first concerns (essentially) the sum of the squares of the component sizes; it 
is perhaps in the literature, but since it is immediate, we give a proof for completeness.
Given a hypergraph $H$,
let $\Nc(H)$ denote the number of (ordered) pairs $(v,w)$ of (not necessarily distinct) vertices
of $H$ with the property that $v$ and $w$ are connected by a path, i.e., are in the same component.

\begin{lemma}\label{Ncon}
Suppose that our Weak Assumption~\ref{A1} holds, and define $\range=\range_n$ as in \eqref{rangedef}.
Let $s=s(n)\in\range$ and $t=t(n)\ge 0$. Then
$\E[\Nc(\Hrnsp)]=O(n/\eps)$ and
\[ 
 \E\bigl[ \Nc(\Hrnp\setminus\cL_1) \mid L_1=s, N_1=t \bigr] = O(n/\eps).
\]
\end{lemma}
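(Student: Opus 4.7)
The plan is to exploit the identity $\Nc(H) = \sum_{v} |C(v)|$, where $C(v)$ denotes the component of $v$ in $H$, and to reduce the conditional bound to the unconditional one via the duality result of Lemma~\ref{dual2}.

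I would first establish $\E[\Nc(\Hrnsp)] = O(n/\eps)$. Setting $m = n-s$, by symmetry in the vertex set it suffices to show $\E[|C(v)|] = O(1/\eps)$ for a fixed $v \in [m]$. By Lemma~\ref{dual1}, $\Hrmp$ has branching factor $\la' = pm^{r-1}/(r-2)! = 1 - \Theta(\eps)$. Let $Y_k$ denote the number of vertices at graph distance exactly $k$ from $v$. A standard BFS/branching-process coupling --- using the principle of deferred decisions, so that the at most $\binom{m-1}{r-1}$ as-yet-untested potential edges through each active vertex are independently present with probability $p$, and each present edge brings in at most $r-1$ new vertices --- yields
\[
 \E[Y_{k+1}\mid Y_0,\ldots,Y_k] \le (r-1)\binom{m-1}{r-1}p\cdot Y_k \le \la' Y_k.
\]
Iterating gives $\E[Y_k] \le (\la')^k$, so $\E[|C(v)|] = \sum_{k\ge 0}\E[Y_k] \le 1/(1-\la') = O(1/\eps)$, and summing over $v$ yields $\E[\Nc(\Hrnsp)] \le m/(1-\la') = O(n/\eps)$.

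For the conditional statement, I would apply Lemma~\ref{dual2} with $f = \Nc$ (isomorphism invariant) and $\cP$ the (also isomorphism invariant) set of $s$-vertex hypergraphs of nullity $t$, which immediately gives $\E[\Nc(\Hrnp\setminus\cL_1)\mid L_1=s,\,N_1=t] \le (1+o(1))\E[\Nc(\Hrnsp)] = O(n/\eps)$. The only step requiring a little care is justifying the BFS recursion $\E[Y_{k+1}\mid\cF_k]\le \la' Y_k$, since previous rounds have already conditioned on the absence of some edges; but this conditioning only restricts the set of untested edges and never creates new ones, so the stochastic domination by a Galton--Watson branching process with offspring mean $\le \la'$ is standard, and I do not expect any real obstacle.
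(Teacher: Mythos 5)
Your proof is correct and follows essentially the same route as the paper: reduce the conditional bound to the unconditional one via Lemma~\ref{dual2}, and bound $\E[\Nc(\Hrnsp)]$ by a geometric series in the subcritical branching factor $\la' = 1-\Theta(\eps)$ supplied by Lemma~\ref{dual1}. The only difference is cosmetic --- the paper obtains the geometric decay from a direct first-moment count of length-$\ell$ paths (giving $\E[J_\ell]\le m\la'^\ell$), whereas you phrase it as a BFS/branching-process domination $\E[Y_{k+1}\mid\cF_k]\le (r-1)\binom{m-1}{r-1}p\,Y_k\le\la' Y_k$; both yield the same bound $m/(1-\la')=O(n/\eps)$.
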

\begin{proof}
By Lemma~\ref{dual2} it suffices to prove the first statement.
Let $H'=\Hrnsp$ and, for $\ell\ge 0$, let
$J_\ell$ be the number of ordered pairs of vertices $v,w\in H'$
joined by a path in $H'$ of length $\ell$, so $\Nc(H')\le\sum_\ell J_\ell$. Set $m=n-s$.
Writing a $v$--$w$ path of length $\ell$ as $v_0 e_1 v_1 e_2\cdots e_\ell v_\ell$, where
the $v_i$ are distinct vertices and the $e_i$ distinct hyperedges with 
$v_0=v$, $v_\ell=w$ and $e_i$ containing $v_{i-1}$ and $v_i$, there are at most
$m^{\ell+1}$ choices for the $v_i$, then at most
$\binom{m}{r-2}$ ways of extending each pair $v_{i-1}v_i$ to a hyperedge;
to obtain a path these edges must be distinct, so the probability that all are present
is $p^{\ell}$. Hence,
\[
 \E[J_\ell] \le  m^{\ell+1} \frac{m^{(r-2)\ell}}{(r-2)!^\ell} p^\ell
 = m\left(\frac{m^{r-1}p}{(r-2)!}\right)^\ell
 = m \la(H')^\ell,
\]
where $\la(H')$ is the `branching factor' of $H'=\Hrnsp$, defined by \eqref{bfdef}.
By Lemma~\ref{dual1}, $\la(H')=1-\Theta(\eps)$, so summing over $\ell$ we see that
\[
 \E[\Nc(H')] \le m (1-\la(H'))^{-1} = O(n/\eps),
\]
as claimed.
\end{proof}

By similar arguments, one can show that the expected number of vertices  on cycles
is $O(\eps^{-1})$, and that the expected number of vertices in components containing
cycles is $O(\eps^{-2})$. We do not need these bounds here.

We finish this section by considering \emph{complex} components, i.e., ones with nullity
at least $2$. Karo\'nski and \L uczak~\cite{KL_giant} prove a version of the following
lemma
for the `size model' $\Hrnm$. We give a (more detailed) proof for $\Hrnp$ for completeness.

\begin{lemma}\label{lcx}
Suppose that our Weak Assumption~\ref{A1} holds, and define $\range=\range_n$ as in \eqref{rangedef}.
For any $s=s(n)\in\range$, the expected number of complex components of $H'=\Hrnsp$
is $O(1/(\eps^3n))=o(1)$.
\end{lemma}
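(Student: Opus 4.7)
Let $m := n - s$; since $s \in \range$ we have $m = \Theta(n)$, and by Lemma~\ref{dual1} the hypergraph $H' = \Hrnsp$ has branching factor $\la' = 1 - \Theta(\eps)$. The plan is to estimate the first moment of the number of complex components by summing over (vertex count, nullity) pairs:
\[
 \E\bb{\#\{\text{complex components of }H'\}} = \sum_{v \ge r,\ t \ge 2} \E[X_{v,t}],
\]
where $X_{v,t}$ counts components of $H'$ with $v$ vertices and nullity $t$; writing $e = (v+t-1)/(r-1)$ for the corresponding edge count, a fixed $v$-set spans such a component with probability $C_r(v,t)\,p^e(1-p)^{B(v)}$, with $B(v) = \binom{m}{r} - \binom{m-v}{r} - e$.

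The heart of the argument is a Wright-type upper bound
\[
 C_r(v,t) \le K_{r,t}\,v^{k-1}\,\partit{k}{r-1}\cdot v^{3t/2}, \qquad k = (v-1)/(r-1),
\]
valid when $v \equiv 1 \pmod{r-1}$ (with analogous statements otherwise). Heuristically, the spanning $r$-tree count on $[v]$ is given exactly by Lemma~\ref{l_rtree}, while each of the $t$ cycle-closing edges contributes a factor of roughly $v^{3/2}$ in the asymptotic enumeration, by analogy with Wright's classical bound $C_2(v,t)=\Theta(v^{v-2+3t/2})$ for graphs. Such a bound for bounded $t$ is essentially implicit in Karo\'nski and \L uczak~\cite{KL_sparse}; the parity cases $v \not\equiv 1 \pmod{r-1}$ contribute only a constant factor, since they can be absorbed by fixing a bounded number of additional edges.

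Substituting into $\E[X_{v,t}]$ and mimicking the Stirling-based simplification used in the proof of \eqref{mk}, each summand collapses, up to constants depending only on $r$ and $t$, to
\[
 \E[X_{v,t}] \le C_{r,t}\,m^{-(t-1)}\,v^{3t/2 - 5/2}\,(\la'\,e^{1-\la'})^e.
\]
Since $\la' e^{1-\la'} = \exp(-\Theta(\eps^2))$ and $e = \Theta(v)$, comparison with a Gamma integral gives $\sum_v v^{3t/2-5/2}\exp(-\Theta(\eps^2 v)) = \Theta(\eps^{-3t+3})$, whence
\[
 \sum_v \E[X_{v,t}] = O\bb{m^{-(t-1)}\eps^{-3t+3}} = O\bb{(\eps^3 n)^{-(t-1)}/\eps^3}.
\]
The dominant contribution is $t = 2$, giving $O(1/(\eps^3 n))$; for $t \ge 3$ the bound is $o(1/(\eps^3 n))$ because $\eps^3 n \to \infty$, so summing over $t \ge 2$ yields the result.

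The hard part will be stating and proving the $C_r(v,t)$ bound cleanly and uniformly in both $v$ and $t$. The Wright exponent $v^{3t/2}$ is classical for graphs, but the hypergraph analogue requires care with the parity constraint $v + t \equiv 1 \pmod{r-1}$ and with the $t$-dependence of the constants $K_{r,t}$. One must also verify that the tail of the sum, where $v$ is much larger than $\eps^{-2}$ (up to $v \le m$), is killed by the Gaussian decay in $\eps^2 v$ coming from $(1-p)^{B(v)}$; this follows from a second-order expansion of $B(v)\log(1-p)$ in exactly the manner used to extract the factor $(\la' e^{1-\la'})^k$ in the proof of Lemma~\ref{smalltree}.
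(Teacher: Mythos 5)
Your plan takes a genuinely different route from the paper. The paper avoids Wright-type counting of connected hypergraphs altogether: it observes that every complex component has a subgraph whose bipartite vertex--edge incidence graph contains a \emph{minimal} graph of nullity two --- a theta-graph or a dumbbell --- and there are only $O(\ell^2)$ isomorphism classes of such bipartite graphs with $\ell$ edges. Summing the expected number of copies over $\ell$ then gives a clean $O(m^{-1}\eps^{-3})$ bound with almost no analysis. You instead sum $\E[X_{v,t}]$ over all pairs $(v,t)$ using a hypothetical uniform bound $C_r(v,t)\le K_{r,t}\,v^{k-1}\partit{k}{r-1}\,v^{3t/2}$.

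The gap is the one you flag yourself: that Wright-type bound, uniform in both $v$ and $t$, is nowhere proved, and it is not a small omission. Wright's classical asymptotic $C_2(v,t)\sim\gamma_t\,v^{v-2+3t/2}$ holds only for $t=o(v^{1/3})$; in a complex component the nullity $t$ can be as large as $\Theta(v)$, where the $v^{3t/2}$ shape is no longer the right one and the hypergraph analogue is additionally complicated by the modular constraint $v+t\equiv 1\pmod{r-1}$. Since you need the sum over \emph{all} $t\ge 2$ and all $v$, you would have to either prove a uniform upper bound in the full range (which is substantially more than what Karo\'nski--\L uczak supply) or split into regimes, at which point you would likely end up re-deriving something close to the paper's minimal-subgraph counting. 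There is also a small arithmetic slip in the tail sum: $m^{-(t-1)}\eps^{-3(t-1)}=(m\eps^3)^{-(t-1)}$, so the summand over $t$ is $O\bigl((\eps^3 n)^{-(t-1)}\bigr)$, not $O\bigl((\eps^3 n)^{-(t-1)}/\eps^3\bigr)$; the conclusion at $t=2$ is unaffected, but it is worth getting right since you need the $t\ge 3$ terms to be negligible. The upshot: your strategy could work if the enumeration bound were available, but as written it rests on an unproved and delicate estimate, whereas the paper's argument bypasses enumeration of connected hypergraphs entirely.
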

\begin{proof}
Writing $\GI(H)$ for the bipartite vertex-edge incidence graph of a hypergraph $H$, it is
easy to check that $n(H)=n(\GI(H))$. A minimal connected graph with nullity at least $2$ clearly
has nullity exactly $2$ (otherwise delete an edge in a cycle), and is easily seen to be either
a $\theta$-graph, consisting of two distinct vertices joined by three internally
vertex-disjoint paths, or a dumbbell, i.e., two edge-disjoint cycles connected
by a path of length at least 0. (The cycles are vertex-disjoint
unless the connecting path has length $0$.) Up to isomorphism, there are $O(\ell^2)$ 
such graphs with $\ell$ edges: having chosen whether the graph is of the $\theta$ or
dumbbell type, it is specified by choosing the lengths of three paths/cycles,
constrained to sum to $\ell$.

Let $\cG_\ell$ denote the set of isomorphism classes of $\ell$-edge bipartite graphs
of the form above, where we distinguish the vertex class $A$ corresponding
to hypergraph vertices from the class $B$ corresponding to hypergraph edges;
thus $|\cG_\ell|=O(\ell^2)$. If $H$ is a connected hypergraph with $n(H)\ge 2$, then $n(\GI(H))\ge 2$,
so $\GI(H)$ contains some $G\in \bigcup_\ell \cG_\ell$ as a subgraph. If
$G$ has vertex partition $A\cup B$, with $A=\{a_1,\ldots,a_k\}$ and $B=\{b_1,\ldots,b_t\}$,
then in particular $H$ has a subgraph $H_0$ consisting of $t$ hyperedges
with $\GI(H_0)$ containing $G$ as a subgraph. Fixing $G$ for the moment, let
us estimate the expected number of such subgraphs $H_0$ present in $H'=\Hrnsp$.

Writing $m=n-s$, there are $m(m-1)\ldots (m-k+1)\le m^k$ choices for the vertices
of $H'$ corresponding to $a_1,\ldots,a_k$. Let $d_i$ be the degree of $b_i$ in $G$.
For each $1\le i\le t$ we must choose
$r-d_i$ further vertices (other than those already specified by the neighbours
of $b_i$ in $G$) to complete the hyperedge corresponding to $b_i$. For each $i$ there
are at most $m^{r-d_i}/(r-d_i)!$ ways of doing this. Since all but at most two
vertices of $G$ have degree~$2$, and $\sum_i d_i=e(G)=\ell$, this gives in total
\[
 O\left(\frac{m^{rt-\ell}}{(r-2)!^t}\right)
\]
choices.
Finally, the probability that the resulting subgraph $H_0$ is present in $H'$
is exactly $p^t$. Hence, the expected number of such subgraphs $H_0$ corresponding
to a particular $G$ is bounded by a constant times
\[
 \frac{m^{k+rt-\ell}}{(r-2)!^t} p^t = m^{k+t-\ell} \left(\frac{m^{r-1}p}{(r-2)!}\right)^t
 = m^{-1} \la(H')^t,
\]
where in the last step we used the fact that $G$ has nullity $2$, so $k+t-\ell=|G|-e(G)=-1$,
and the definition of the branching factor $\la(H')$. 

Since $G$ has either two vertices of degree $3$ or one of degree $4$, and all other
vertices have degree $2$, we have $2t\le \ell=\sum_i d_i\le 2t+2$. Hence $t\ge \ell/2-1$.
Thus, summing over the $O(\ell^2)$ choices of $G\in G_\ell$ and then over $\ell$ we see that
the expectation $\mu$ of number of complex components of $H'$ satisfies
\[
 \mu = O\left(\sum_{\ell\ge 2} m^{-1}\ell^2 \la(H')^{\ell/2-1}\right)
 = O\left(\sum_{\ell\ge 2} m^{-1}\ell^2 \la(H')^{\ell/2}\right),
\]
using the bound $\la(H')=\Theta(1)$ from Lemma~\ref{dual1} in the last step.
Now $\la(H')=1-\Theta(\eps)$ by Lemma~\ref{dual1}; hence $\la(H')^{1/2}=1-\Theta(\eps)$.
Since
\[
 \sum_{\ell\ge 2} \ell^2 x^\ell\le 2\sum_{\ell\ge 0} (\ell+1)(\ell+2)x^\ell/2 = 2(1-x)^{-3}
\]
for $0\le x<1$, it follows that
$
 \mu = O(m^{-1}\eps^{-3}) = O(1/(\eps^3n)),
$
as claimed.
\end{proof}

Of course, instead of considering vertex-edge incidence graphs, we could directly count
the expected number of minimal complex hypergraphs present in $\Hrnsp$. However, there are significantly
more classes of minimal complex hypergraphs than minimal complex graphs, because
the special (degree more than $2$) vertices of the corresponding bipartite incidence graph
may correspond to vertices or edges of the hypergraph.

\begin{lemma}\label{C1unique}
Suppose that our Weak Assumption~\ref{A1} holds.
Let $\ucx$ be the event that $\cL_1$ is the unique complex component of $\Hrnp$.
Then for any $s=s(n)\in \range$ and $t=t(n)\ge 2$ we have
\[
 \Pr\bb{ \ucx^\cc \mid L_1=s, N_1=t } = O(1/(\eps^3n)).
\]
Furthermore, the probability that $\Hrnp\setminus\cL_1$ has a complex component is $O(1/(\eps^3 n))$.
\end{lemma}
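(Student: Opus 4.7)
The plan is to reduce both statements to Lemma~\ref{lcx} via the discrete duality Lemma~\ref{dual2}. The key preliminary observation is that since the hypothesis $t\ge 2$ means $N_1\ge 2$, the component $\cL_1$ is itself complex. Hence the event $\cU^\cc$ is precisely the event that $\Hrnp\setminus\cL_1$ contains at least one complex component. This is an isomorphism invariant property of $\Hrnp\setminus\cL_1$, so Lemma~\ref{dual2} applies to it.

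For the first statement, I would apply Lemma~\ref{dual2} taking $\cP$ to be the set of $s$-vertex hypergraphs with nullity $t$ and $\cQ$ to be the property ``contains a complex component''. This yields
\[
\Pr\bb{ \cU^\cc \mid L_1=s, N_1=t } \le (1+o(1))\, \Pr\bb{ \Hrnsp \text{ has a complex component}},
\]
uniformly in $s\in\range$. By Markov's inequality, the right-hand probability is at most the expected number of complex components of $\Hrnsp$, which is $O(1/(\eps^3n))$ by Lemma~\ref{lcx}. Since this bound is uniform over $s\in\range$, we obtain the first claim.

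For the second statement, split according to whether $L_1\in\range$ or not. By \eqref{rangewhp} we have $\Pr(L_1\notin\range)=O(1/(\eps^3n))$, so this contribution is already of the required order. On the complementary event, apply the second part of Lemma~\ref{dual2} with $\cP=\cH_s$ and $f(H)$ equal to the number of complex components of $H$: this gives, uniformly in $s\in\range$,
\[
\E\bb{\#\text{complex components of }\Hrnp\setminus\cL_1 \,\big|\, L_1=s} \le (1+o(1))\,\E\bb{\#\text{complex components of }\Hrnsp} = O(1/(\eps^3n)),
\]
by Lemma~\ref{lcx}. A Markov step followed by summing (or averaging) over $s\in\range$ yields the desired bound on the unconditional probability.

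There is no real obstacle here: the work has already been done in Lemmas~\ref{dual2} and~\ref{lcx}; what remains is just the routine observation that $N_1\ge 2$ makes $\cL_1$ complex, so that ``$\cL_1$ is the unique complex component'' becomes equivalent to a property of the hypergraph obtained by deleting $\cL_1$, which is exactly the setting Lemma~\ref{dual2} is designed to handle.
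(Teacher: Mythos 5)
Your proof is correct and takes essentially the same route as the paper: both argue that $N_1\ge 2$ forces $\cL_1$ to be complex, reduce $\cU^\cc$ to the existence of a complex component in $\Hrnp\setminus\cL_1$, apply the discrete duality Lemma~\ref{dual2} and the first-moment bound of Lemma~\ref{lcx}, and for the unconditional statement combine the uniform conditional bound with the tail estimate $\Pr(L_1\notin\range)=O(1/(\eps^3n))$.
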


\begin{proof}
Let $\cE$ be the event that $\Hrnp\setminus\cL_1$ has at least one complex component.
By Lemmas~\ref{dual2} and~\ref{lcx}, for $s\in\range$ and $t'\ge 0$ we have
\begin{equation}\label{ccx}
 \Pr\bb{ \cE \mid L_1=s, N_1=t' }=O(1/(\eps^3n)).
\end{equation}
Since $N_1=t\ge 2$ implies that $\cL_1$ is complex,
the first statement follows.

Since \eqref{ccx} holds for all $t'$, for any $s\in\range$ we have
\[
 \Pr(\cE\mid L_1=s)=O(1/(\eps^3n)).
\]
Recalling from \eqref{rangewvhp} that
$\Pr(L_1\notin\range)=O(1/(\eps^3 n))$,
it follows that $\Pr(\cE)=O(1/(\eps^3n))$.
\end{proof}

\section{Extended cores in hypergraphs}\label{sec_ext}

The strategy of our proof of Theorem~\ref{thprob} is as follows.
We shall randomly mark a small (order $\eps^2$) fraction of the vertices of $H=\Hrnp$,
and define the \emph{extended core} $C^+(H)$ by repeatedly deleting edges in which at least $r-1$ vertices
are unmarked and are contained in no other edges.
We shall show that, conditional on the event $\{L_1=s, N_1=t\}$, where
$s$ and $t$ are in the typical range,
certain events are likely to hold. In particular, it is likely that the largest
component $\eco$ of $C^+(H)$ is a subgraph of the largest component of $H$, that the number
$a_1$ of vertices in $\eco$ is $\Theta(\eps^2 n)$, and that the number $a_0$ of isolated
vertices in $C^+(H)\setminus \eco$ is also $\Theta(\eps^2 n)$. We condition on $C^+(H)$, and
pick $a=\min\{a_0,a_1\}$ vertices of $\eco$ and $a$ isolated vertices of $C^+(H)\setminus \eco$.
We also condition on the set $V$ of vertices joined by paths in $H$ to the selected vertices,
which we show satisfies $|V|=\Theta(\eps n)$ with high probability. Then we show that the conditional
distribution of the number of vertices in $V$ that are joined by paths to $\eco$ has a smooth distribution;
it is this number that will play the role that $Y_n$ plays in the proof of Theorem~\ref{th_eg}.

Turning to the details,
by the \emph{core} $\core(H)$ of a hypergraph $H$ we mean the (possibly empty)
hypergraph formed from $H$ by repeatedly deleting isolated vertices and
hyperedges $e$ in which at most one vertex is in a hyperedge other than
$e$. Equivalently, $\core(H)$ is the maximal sub-hypergraph of $H$ without
isolated vertices in which every
edge contains at least two vertices in other hyperedges.  Note that this is only
one of several possible generalizations of the concept of the core
of a graph~\cite{BBkcore}; another natural possibility is to take the maximal
sub-hypergraph with minimum degree at least $2$. A hypergraph $H$ consists of its
core, tree components, and the `mantle', made up of
trees each of which meets the core in a single vertex. It is a part of the mantle that
we shall use in our smoothing argument.

Note that the core of $H$ and that of its bipartite vertex--edge incidence
graph $\GI(H)$ correspond in a natural way, except that in the latter,
any vertices corresponding to vertices of $\core(H)$ that are in a single
edge of $\core(H)$ are deleted.

As discussed in Section~\ref{sec_example}, we would like to `detach and reattach' the trees attached
not only to the core, but also to an additional set of vertices of comparable size. To achieve
this, we define an `extended core', essentially by artificially placing a suitable number of extra
vertices into the core; we shall call these vertices `marked' vertices.

Let $(H,V^*)$ be a \emph{marked hypergraph}: a hypergraph $H=(V,E)$ together
with a subset $V^*$ of $V$. The vertices in $V^*$ will be called \emph{marked vertices}.
The \emph{extended core} $\excore(H,V^*)$ is the marked sub-hypergraph obtained by repeatedly deleting
unmarked isolated vertices, and hyperedges in which all or all but one vertices
are unmarked and have degree $1$. Equivalently, $\excore(H,V^*)$ is the maximal
sub-hypergraph in which every edge contains at least two vertices that are either
marked or in at least one other edge, and all isolated vertices are marked.
Note that the deletion operation defining the extended core preserves connectivity,
so the extended core of a connected hypergraph $H$ is either connected or, if $H$ is a tree
with no marked vertices (an `unmarked tree'), empty. Of course, $\excore(H,V^*)$ is the union of the extended
cores of the components of $H$.

\begin{proposition}\label{excore}
Any marked hypergraph $(H,V^*)$ is the union of its extended core $\excore=\excore(H,V^*)$,
a set $\{T_v\}_{v\in V(\excore)}$ of trees, each with with $v\in T_v$,
and a possibly empty set $\{U_i\}$ of trees, with the vertex sets
$V(T_v)\setminus v$ and $V(U_i)$ disjoint from each other and from $V(\excore)$.
\end{proposition}
In other words, noting that by definition all vertices outside $\excore(H,V^*)$ are unmarked,
we may reconstruct $(H,V^*)$ from its extended core by adding disjoint trees to each vertex $v$
of the extended core, unmarked expect
possibly at $v$, and possibly some further disjoint unmarked trees.
Later we shall refer to the set $M^+=\bigcup_{v\in \excore(H,V^*)} (V(T_v)\setminus v)$ as the (vertex set of)
the \emph{extended mantle} of $(H,V^*)$.
\begin{proof}
Simply reverse the edge-deletion algorithm defining the extended core.
\end{proof}

In this section and the next it will be convenient (though not essential) to assume
that $\eps\to 0$, i.e., our Standard Assumption~\ref{A0},
as in Theorem~\ref{thL1} whose proof we are preparing for. We also consider a constant
$0<\eta<1/100$ whose role will be explained at the start of the next section. Any implicit
constants or functions may depend on the choice of the functions $\eps=\eps(n)$ and the constant
$\eta>0$. As we shall see in Section~\ref{sec_main}, this will cause no problems when
we come to apply the results. Thus, in this section, we may regard $\eps=\eps(n)$ and $\eta>0$
as given, satisfying the following condition which we state for ease of reference.

\begin{assumption}\label{Ae}\rm
The integer $r\ge 2$ and real number $0<\eta<1/100$ are fixed.
The functions $p(n)$, $\la(n)$ and $\eps(n)$ are related by
$\la=1+\eps$ and $p=\la (r-2)!n^{-r+1}$. Moreover,
as $n\to\infty$, we have $\eps^3n\to\infty$ and $\eps\to 0$.
\end{assumption}

With $\eta>0$ and $\eps(n)$ given as above, set
\begin{equation}\label{alphadef}
 \alpha = \frac{\eta}{100r}.
\end{equation}
We shall mark the vertices of our random hypergraph $H=\Hrnp$ independently with probability
\[
 p_{\rm{mark}} = \alpha \eps^2 = \alpha \eps(n)^2.
\]

We shall treat $\Hrnp$ as a marked hypergraph without
explicitly indicating the set $V^*$ of marked vertices in the notation.
Let $\excore(\cL_1)$ be the extended core of the marked hypergraph $\cL_1$,
where, as usual, $\cL_1$ is the largest component of $\Hrnp$.
Thus $\excore(\cL_1)$ is a component of $\excore(\Hrnp)$, except
in the unlikely event that $\cL_1$ is an unmarked tree, in which case $\excore(\cL_1)=\emptyset$.
Recall that $L_1=|\cL_1|$. The next few lemmas gather
properties of $\excore(\Hrnp)$ and its `mantle' that we shall need.
A key point is that these results hold conditional on the giant
component $\cL_1$ having a specific order $s$ and nullity $t$, provided $s$ is in the typical
range $\range$ defined in \eqref{rangedef}.  For this reason
they do not obviously follow from `global' results saying that whp the
(extended) core has some property. Another key point
is that we can afford to give up constant factors in the estimates
of the size of the extended core and of its mantle. Throughout the rest of this section,
$p$, $\la$, $\eps$ and $\eta$ satisfy Assumption~\ref{Ae}, and we define $\range$
as in \eqref{rangedef}.
All new constants introduced below 
may depend on the choice of the function $\eps=\eps(n)$ and of $\eta$.

\begin{lemma}\label{ecbig}
Let $r\ge 2$, $\eta>0$ and $\eps=\eps(n)$ satisfying Assumption~\ref{Ae} be given.
Then there is a constant $c_1>0$ such that, for $n$ large enough, for any $s=s(n)\in \range$
and $t=t(n)\ge 1$ we have
\begin{equation}\label{ecbig1}
 \Pr\bb{ |\excore(\cL_1)| > c_1\eps^2n \mid L_1=s, N_1=t } \ge 1-\eta.
\end{equation}
\end{lemma}
\begin{proof}
We shall condition not only on the event $\{L_1=s,N_1=t\}$, but also on the vertex set of $\cL_1$
and on the entire hypergraph structure of its core $C(\cL_1)$.
The extended core contains the core; if the core is not already large enough, we shall show
that with conditional probability at least $1-\eta$, the interaction of the
marked vertices with the core generates an extended core of at least the required size.

Turning to the details,
by \eqref{rangecont} there is a constant $c_0>0$ that depends only on the function $\eps(n)$,
such that
\begin{equation}\label{slower}
 s\in\range_n \hbox{\quad implies\quad} s\ge c_0\eps n.
\end{equation}
We shall prove \eqref{ecbig1} with
\begin{equation}\label{c1choice}
 c_1 = \frac{\alpha c_0}{4} = \frac{\eta c_0}{400r}.
\end{equation}
First, by Chebyshev's inequality, if $X$ has a binomial distribution
with mean $\mu\ge 8/\eta$ (and so variance less than $\mu$) then $\Pr(X\ge \mu/2)\ge 1-\eta/2$. Hence,
from \eqref{slower} and the assumption $\eps^3n\to\infty$,
there is an $n_0$ such that for all $n\ge n_0$
\begin{equation}\label{binlarge}
 s\in\range_n \hbox{\quad implies\quad} \Pr\bb{\Bi(s,\alpha\eps^2) \ge \alpha\eps^2 s/2} \ge 1-\eta/2.
\end{equation}
By assumption $\eps=\eps(n)$ satisfies $\eps\to 0$ and $\eps^3n\to\infty$. Hence, increasing
$n_0$ if necessary, for all $n\ge n_0$ we have
\begin{equation}\label{epsc}
 \eps \le 1/100\hbox{\quad and\quad} \eps^3n \ge 8r/c_1.
\end{equation}

From now on, let $n\ge n_0$, $s\in\range_n$ and $t\ge 1$ be given.
We condition on the event $\cE$ that $L_1=s$, $N_1=t$, the vertex set of $\cL_1$ is some specific
set $V_1$ of $s$ vertices, and the usual (non-extended) core $\core(\cL_1)$ is some particular hypergraph
with vertex set $V_2\subset V_1$. We write $a=|V_2|$.
Our aim is to show that
\begin{equation}\label{cecaim}
 \Pr\bb{|\excore(\cL_1)|\le c_1\eps^2n \bigm| \cE } \le \eta.
\end{equation}
Since $\cL_1$ and $\core(\cL_1)$ have the same nullity, we may assume that $\core(\cL_1)$ has nullity $t$; in fact, we only
need the trivial consequence that $\core(\cL_1)$ is not empty.\footnote{%
In proving Theorem~\ref{thL1only}, we do not condition on the nullity $n(\cL_1)$.
This means we cannot \emph{a priori} assume that $\core(\cL_1)$ is non-empty. However, it is immediate
from the formulae given by Karo\'nski and \L uczak~\cite[Theorem 9]{KL_sparse} for the number of
connected hypergraphs on $s$ vertices with a given small excess 
that $\Pr(n(\cL_1)=0\mid L_1=s)=o(\Pr(n(\cL_1)=r-1\mid L_1=s))$, so $\Pr(n(\cL_1)=0\mid L_1=s)=o(1)$.
Hence we can indeed assume that $a\ge 1$.}
Since $\excore(\cL_1)\supset \core(\cL_1)$, if $a>c_1\eps^2n$ then the conditional probability in \eqref{cecaim} is 0.
Thus we may assume that
\begin{equation}\label{asmall}
 1\le a\le c_1\eps^2n.
\end{equation}

Relabelling, let us take the vertex set of $\cL_1$ to be $[s]$ and that of its
core to be $[a]\subset [s]$. From the definition of the core, $\cL_1$
is the union of its core and an $[a]$-rooted $r$-forest $F$ on $[s]$.
Since this forest $F$ does not affect the core, after conditioning on $\cE$ as above, 
$F$ is uniformly random on all such forests.
Recall that we mark vertices independently with probability $\alpha\eps^2$, where $\alpha$ is given
in \eqref{alphadef}. Since $\cL_1$ and its core $C(\cL_1)$
are defined without reference to the set $V^*$ of marked vertices,
each vertex of $[s]$ is marked independently of the others and of the random
forest $F$.

Set $\ell=\ceil{\eps^{-1}}$.
Call a marked vertex $v\in \cL_1$ \emph{bad} if either 

\smallskip
(i) it is at distance at most $\ell$ from $[a]=V(\core(\cL_1))$ or

(ii) it is joined to another marked vertex by a path in $F=\cL_1-C(\cL_1)$ of length
at most $2\ell$.

\smallskip\noindent
If $v$ is not bad, we call it \emph{good}.

Every marked vertex in $\cL_1$ is on a path to the core $\core(\cL_1)$. The union of these paths
is a subgraph $F^*$ of the forest $F$, and $\excore(\cL_1)=\core(\cL_1)\cup F^*$,
with each component of $F^*$ meeting $\core(\cL_1)$ in a single vertex.
For each good marked vertex $v$, consider the first
$\ell$ edges of the path to the core starting at $v$: these shortened paths
are necessarily disjoint, so $|\excore(\cL_1)|$ is at least $\ell$
times (in fact, at least $(r-1)\ell$ times) the number of good marked vertices.
As the number of marked vertices in $\cL_1$ has the binomial distribution $\Bi(s,\alpha\eps^2)$,
by \eqref{binlarge} the probability that there are at least $\alpha\eps^2s/2$ marked vertices in $\cL_1$
is at least $1-\eta/2$. We \emph{claim} that, conditional on $\cE$,
the expected number of bad marked vertices is at most $\eta\alpha\eps^2s/8$.
Assuming this then, by Markov's inequality, with probability at least $1-\eta/2$
there are at most $\alpha\eps^2s/4$ bad marked vertices, and hence with
probability at least $1-\eta$ there are 
at least $\alpha\eps^2s/2-\alpha\eps^2s/4=\alpha\eps^2s/4$ good marked vertices. But then,
recalling \eqref{slower} and \eqref{c1choice},
\[
 |\excore(\cL_1)|\ge \ell\alpha\eps^2s/4 \ge \alpha\eps s/4 \ge \alpha c_0 \eps^2n/4 = c_1\eps^2n.
\]

To prove the claim, let $v$ be a vertex in $[s]=V(\cL_1)$ chosen uniformly at random.
We must show that
the probability that $v$ is a bad marked vertex is at most $\eta\alpha\eps^2/8$.
So first condition on the event that $v$ is marked; it remains to show that the conditional
probability that (i) or (ii) holds is at most $\eta/8$.

For (i), this conditional probability is exactly $1/s$ times the expectation $\mu$
of the number of vertices in $[s]$
within distance $\ell$ of $[a]$.
From Lemma~\ref{l_dt} and \eqref{asmall},
\[
 \mu \le \sum_{0\le j\le \ell} (a+(r-1)j) \le 2a\ell +2r\ell^2 \le  2c_1\eps^2 n\ell + 2r\ell^2.
\]
Since $n\ge n_0$, from \eqref{epsc} we have $\ell=\ceil{\eps^{-1}}\le 2\eps^{-1}$, say,
and $8r/(\eps^3n)\le c_1$. Thus
\[
 \mu \le  4c_1\eps n +8r\eps^{-2} \le 5c_1\eps n = \frac{\eta c_0\eps n}{80r} \le \frac{\eta s}{80r},
\]
recalling \eqref{slower}. Hence the conditional probability $\mu/s$ that (i) holds is at most
$\eta/(80r) < \eta/16$.

For (ii), the components of the forest $F$ give a partition of the vertex set $[s]$ of $\cL_1$
into $a$ parts (some of which may be singletons). Let us condition on the vertex $v$
and on this partition. The component $T$ of $F$ containing $v$ is then a uniformly random $r$-tree
on its vertex set $X$. Viewing $v$ as the root, we can regard this $r$-tree
as a $\{v\}$-rooted $r$-forest, and then by Lemma~\ref{l_dt} the expected number of
vertices $w\ne v$ joined to $v$ by paths in $F$ (and hence in $T$) of length at most $2\ell$ is at most
\[
 \sum_{1\le j\le 2\ell} (1+(r-1)j) \le 4r\ell^2 = 4r\ceil{1/\eps}^2.
\]
Hence the probability that one or more such vertices are marked is at most
$4r\ceil{1/\eps}^2\alpha\eps^2$. From \eqref{epsc} and \eqref{alphadef} this
probability is at most $5r\alpha\le \eta/16$. Thus the conditional probability
that (i) or (ii) holds is at most $\eta/8$, completing the proof of the claim and hence of the lemma.
\end{proof}

We have shown that with high (conditional) probability, the extended core $\excore(\cL_1)$ of the largest
component is not too small. Roughly speaking, our next aim is to
show that with high probability the rest of the extended core,
i.e., $\excore(\Hrnp)\setminus \excore(\cL_1) = \excore(\Hrnp\setminus \cL_1)$ is neither too small
nor too big. While this is not too hard, it turns out that we can avoid some work
by considering instead the set
\begin{equation}\label{Idef}
 \cI = \{ \hbox{ isolated vertices in the hypergraph  }\excore(\Hrnp\setminus\cL_1)\ \}.
\end{equation}
By definition, an isolated vertex in $\excore(\Hrnp\setminus\cL_1)$ is marked (otherwise
it would be deleted in defining the extended core).
By Proposition~\ref{excore}, each $v\in \cI$ corresponds
to a tree component of $\Hrnp\setminus\cL_1$ containing
exactly one marked vertex, namely $v$.

\begin{lemma}\label{isolcore}
Let $r\ge 2$, $\eta>0$ and $\eps=\eps(n)$ satisfying Assumption~\ref{Ae} be given.
Then there is a constant $c_2>0$ such that,
for any $s=s(n)\in\range$ and $t=t(n)\ge 0$,
\[
 \Pr\bb{ c_2\eps^2n \le |\cI| \le 2\alpha\eps^2n \mid L_1=s, N_1=t } =1-o(1).
\]
\end{lemma}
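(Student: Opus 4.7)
The plan is to bound $|\cI|$ above and below separately. The upper bound is essentially free; the lower bound combines a concentration estimate for the number of isolated vertices of $\Hrnsp$ (from Lemma~\ref{smalltree}), the transfer of this estimate to $\Hrnp\setminus\cL_1$ conditional on $\{L_1=s,N_1=t\}$ (via the proof of Lemma~\ref{dual2}), and a binomial concentration for the independent marking.

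For the upper bound, every vertex in $\cI$ is marked (the extended-core reduction never creates marks), so $|\cI|\le|M|$, where $M\subset[n]$ is the set of marked vertices. Since the marking is independent of $\Hrnp$, $|M|$ has distribution $\Bi(n,\alpha\eps^2)$ both unconditionally and conditional on $\{L_1=s,N_1=t\}$; since $\alpha\eps^2 n=\eps^{-1}(\alpha\eps^3n)\to\infty$, Chebyshev yields $|M|\le 2\alpha\eps^2 n$ with (conditional) probability $1-o(1)$.

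For the lower bound, observe that a marked isolated vertex $v$ of $\Hrnp\setminus\cL_1$ is its own tree component containing one marked vertex, and the extended-core reduction leaves $v$ as an isolated vertex of $\excore(\Hrnp)$; hence $|\cI|\ge M_0$, where $M_0$ denotes the number of marked isolated vertices of $\Hrnp\setminus\cL_1$. Let $T_0=T_0(\Hrnp\setminus\cL_1)$ denote the total number of isolated vertices of $\Hrnp\setminus\cL_1$. By Lemma~\ref{smalltree} applied to $\Hrnsp$, $\E[T_0(\Hrnsp)]=\mu_0=\Theta(n)$, and (using $\mu_{0,0}\sim\mu_0^2$ and $\mu_0\to\infty$) $\Var(T_0(\Hrnsp))=\mu_0+\mu_{0,0}-\mu_0^2=o(\mu_0^2)$, so Chebyshev gives $T_0(\Hrnsp)\ge c_3 n$ with probability $1-o(1)$ for some constant $c_3>0$. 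The argument in the proof of Lemma~\ref{dual2} identifies the conditional law of $\Hrnp\setminus\cL_1$ given $\cL_1=H_s$ with that of $\Hrnsp$ conditioned on a $(1-o(1))$-probability event, uniformly over $H_s$ with $|H_s|=s\in\range$ and $n(H_s)=t$, so $\Pr(T_0\ge c_3 n\mid L_1=s,N_1=t)=1-o(1)$. Finally, given $\Hrnp$ (hence $T_0$), $M_0$ has distribution $\Bi(T_0,\alpha\eps^2)$; on $\{T_0\ge c_3 n\}$ its mean is at least $\alpha c_3\eps^2 n\to\infty$, and a third Chebyshev estimate gives $M_0\ge\alpha c_3\eps^2 n/2$ with conditional probability $1-o(1)$. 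Setting $c_2=\alpha c_3/2$ concludes the lower bound.

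The main obstacle is the transfer step: Lemma~\ref{dual2} as stated provides only upper bounds on conditional probabilities of `bad' events for $\Hrnp\setminus\cL_1$ in terms of the corresponding probabilities for $\Hrnsp$, but its proof -- through the identification of the two conditional laws up to a $(1-o(1))$-probability event -- in fact supplies the two-sided control used here. A minor bookkeeping point is to keep the independent randomness of the marking cleanly separated from the randomness of $\Hrnp$ on which we condition, so that the binomial distributions $\Bi(n,\alpha\eps^2)$ and $\Bi(T_0,\alpha\eps^2)$ are undisturbed by the conditioning on $\{L_1=s,N_1=t\}$.
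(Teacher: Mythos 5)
Your proof is correct and takes essentially the same approach as the paper: the upper bound via the total number of marked vertices, and the lower bound by combining the count of isolated vertices in $\Hrnsp$ (the $k=0$ case of Lemma~\ref{smalltree}), a transfer step using Lemma~\ref{dual2}, and concentration of the independent marking. One remark: the ``main obstacle'' you flag is illusory. To show $\Pr(T_0 < c_3 n \mid L_1=s,N_1=t)=o(1)$ you need only the one-sided bound of Lemma~\ref{dual2} as stated, applied to the isomorphism-invariant bad property $\cQ=\{\text{fewer than }c_3 n\text{ isolated vertices}\}$; the upper bound on the conditional bad-event probability is exactly what is required, so there is no need to reopen the lemma's proof. (The paper in fact applies Lemma~\ref{dual2} directly to the already-marked hypergraph $\Hrnsp$, using the routine extension to independent random markings; your reordering --- transfer first, then mark --- avoids even that and is an equally valid variant.)
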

\begin{proof}
The upper bound on $|\cI|$ is trivial.
Indeed, any vertex of $\cI$ is marked, by the definition of the extended core.
Given that $L_1=s$, and any further information about $\cL_1$,
the number of marked vertices in $\Hrnp\setminus \cL_1$
has the binomial distribution $\Bi(n-s,\alpha\eps^2)$, with mean at most $\alpha\eps^2n\to\infty$, so
with high probability this number is at most $2\alpha\eps^2n$.

Turning to the lower bound, by Lemma~\ref{dual2} it suffices to show that whp $H'=\Hrnsp$
(with vertices marked independently with probability $\alpha\eps^2$) has at least $c_2\eps^2n$
isolated vertices in its extended core. An elementary first and second moment calculation (or the
case $k=0$ of Lemma~\ref{smalltree}) shows that whp $H'$ has $\Theta(n)$ isolated vertices.
Since each is marked independently with probability $\alpha\eps^2$ and, if marked, is
an isolated vertex of $\excore(H')$, the result follows from concentration of the binomial distribution.
\end{proof}

Let $H$ be a hypergraph with extended core $\excore(H)$. We define the \emph{mantle} $M^+(H)$
to be the set of vertices of $H$ not in $\excore(H)$ but connected to it by paths. Thus
$\excore(H)\cup M^+(H)$ includes all vertices of $H$ except those in tree components
with no marked vertices.
By Proposition~\ref{excore}, each $w\in M^+(H)$ is connected by a path in the mantle to a unique
vertex $v\in \excore(H)$; for $A\subset V(\excore(H))$ we write $M^+(A)$ for the set of $w\in M^+(H)$
whose corresponding core vertex $v$ is in $A$.

\begin{lemma}\label{isolml}
Let $r\ge 2$, $\eta>0$ and $\eps=\eps(n)$ satisfying Assumption~\ref{Ae} be given.
Then there is a constant $c_3>0$ such that,
for any $s=s(n)\in\range$ and $t=t(n)\ge 0$, we have
\[
 \Pr\bb{ |M^+(\cI)| \ge c_3\eps n\mid L_1=s, N_1=t } =1-o(1).
\]
\end{lemma}
\begin{proof}
Condition on the event that $L_1=s$ and $N_1=t$.
From Corollary~\ref{csmallt}, with conditional probability $1-o(1)$ the hypergraph
$\Hrnp\setminus\cL_1$ contains at least $c\eps^3n$ tree components
each having between $\ceil{\eps^{-2}}$ and $2\ceil{\eps^{-2}}$ edges, and so $\Theta(\eps^{-2})$
vertices. Having revealed the graph $\Hrnp$,
for each such tree, the probability that it contains exactly one marked vertex is at least some
constant $c'>0$. So the conditional distribution of the number $X$ of such trees containing
exactly one marked vertex stochastically
dominates a Binomial distribution with mean $cc'\eps^3n$. Since $\eps^3n\to\infty$,
it follows that whp $X\ge cc'\eps^3n/2$. Since each tree counted by $X$ contains at least
$1+(r-1)\ceil{\eps^{-2}}$
vertices, and so contributes at least $(r-1)\ceil{\eps^{-2}}\ge\eps^{-2}$
vertices to $M^+(\cI)$, the result follows.
\end{proof}

\begin{lemma}\label{isolmu}
Let $r\ge 2$, $\eta>0$ and $\eps=\eps(n)$ satisfying Assumption~\ref{Ae} be given.
Then there is a constant $c_4>0$ such that, for $n$ large enough, 
for every $s\in\range$ and $t\ge 0$ we have
\[
 \Pr\bb{ |M^+(\cI)| \le c_4\eps n\mid L_1=s, N_1=t } \ge 1-\eta.
\] 
\end{lemma}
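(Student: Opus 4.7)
The plan is to estimate $\E[|M^+(\cI)| \mid L_1 = s, N_1 = t]$ and then apply Markov's inequality, using Lemma~\ref{Ncon} as the key input. The main observation is that an isolated vertex of $\excore(\Hrnp\setminus\cL_1)$ is precisely a marked vertex lying in a tree component of $\Hrnp\setminus\cL_1$ which contains no other marked vertex; accordingly $M^+(\cI)$ consists of the unmarked vertices in such tree components. In particular,
\[
 |M^+(\cI)| \le \sum_{T} |T|\,\ind{T\text{ contains at least one marked vertex}},
\]
where the sum runs over tree components $T$ of $\Hrnp\setminus\cL_1$.

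Since the marking of vertices (independently with probability $\alpha\eps^2$) is independent of $\Hrnp$ itself, a union bound gives
\[
 \Pr(T\text{ contains a marked vertex}\mid \Hrnp) \le |T|\cdot \alpha\eps^2
\]
for each tree $T$. Summing and noting that $\sum_{T} |T|^2 \le \Nc(\Hrnp\setminus\cL_1)$, where $\Nc$ counts ordered pairs of vertices in the same component (including the diagonal), I would obtain
\[
 \E\bb{|M^+(\cI)|\,\big|\,\Hrnp} \le \alpha\eps^2\, \Nc(\Hrnp\setminus\cL_1).
\]
Taking a further expectation conditional on $\{L_1=s,N_1=t\}$ and applying Lemma~\ref{Ncon}, which gives $\E[\Nc(\Hrnp\setminus\cL_1)\mid L_1=s,N_1=t] = O(n/\eps)$, yields
\[
 \E\bb{|M^+(\cI)|\,\big|\,L_1=s,N_1=t} = O(\alpha\eps n),
\]
with the implied constant absolute (in particular independent of $\eta$, which only enters through $\alpha=\eta/(100r)$). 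Markov's inequality then gives the desired bound provided $c_4=c_4(\eta)$ is chosen sufficiently large.

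There is no real obstacle here: Lemma~\ref{Ncon} has already packaged up all the difficult work (duality plus the random-walk/branching estimate), and the only care needed is to ensure that the marking operation is independent of the hypergraph random variables so that the conditional expectation of $|M^+(\cI)|$ given $\Hrnp$ factors cleanly through a per-component union bound. Note that unlike the lower bounds in Lemmas~\ref{ecbig}--\ref{isolml}, here we do not need a matching lower-bound statement, so $\Nc$ (which overcounts by including non-tree components) is a perfectly adequate majorant.
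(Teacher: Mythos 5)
Your argument is correct and is essentially the paper's own proof: both bound $\E\bb{|M^+(\cI)| \mid \Hrnp}$ by $\alpha\eps^2\,\Nc(\Hrnp\setminus\cL_1)$, then invoke Lemma~\ref{Ncon} and Markov's inequality. The paper phrases the intermediate majorant as $X(H^-)$, the number of vertices joined by a path to some marked vertex, rather than summing $|T|\,\ind{T\text{ contains a marked vertex}}$ over tree components as you do, but this is only a cosmetic difference.
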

\begin{proof}
Given a marked hypergraph $H$, let $X(H)$ denote the number of vertices $v$ of $H$
with the property that $v$ is joined to some marked vertex of $H$ by a path in $H$.
Note that every vertex of $M^+(\cI)$ has this property in $H^-=\Hrnp\setminus\cL_1$,
so $|M^+(\cI)|\le X(H^-)$. Hence, by Markov's inequality,
it suffices to show that $\E[X(H^-)\mid L_1=s, N_1=t] = O(\eps n)$.\footnote{We need this bound to hold uniformly over $s\in\range_n$ and $t\ge 0$; for this we just consider the worst-case $s(n)$ and $t(n)$.}
Now $X(H^-)$ is at most the number of ordered pairs $(v,w)$ of vertices of
$H^-$ with $v$ marked and $v$, $w$ joined by a path,
so
\[
 \E[X(H^-)\mid L_1=s, N_1=t]\le \alpha\eps^2\E[\Nc(H^-)\mid L_1=s, N_1=t]
\]
which, by Lemma~\ref{Ncon}, is $O(\eps^2 n/\eps)=O(\eps n)$.
\end{proof}

\section{The core smoothing argument}\label{sec_main}

In this section we prove Theorem~\ref{thL1}; this is all that remains to complete
the proof of Theorem~\ref{thprob}. The strategy that we follow is outlined at the start of Section~\ref{sec_ext}.
Recall that we always relate $p=p(n)$ and $\eps=\eps(n)$ by
\[
 \la(n)=1+\eps(n) \hbox{\quad and\quad} p(n)=\la(n) (r-2)!n^{-r+1}.
\]
Define $\range=\range_n$ as in \eqref{rangedef};
in this section we shall consider sequences $(x_n)$, $(y_n)$ and $(t_n)$ of integers such that
\begin{equation}\label{scond}
 t_n\ge 2, \quad x_n,y_n\in\range_n, \quad x_n-y_n=o(\sqrt{n/\eps}), \quad\hbox{and}\quad
x_n\equiv y_n\equiv 1-t_n,
\end{equation}
where the congruence condition is modulo $r-1$. This
condition arises since otherwise there are no
$r$-uniform hypergraphs with nullity $t_n$ and $x_n$ or $y_n$ vertices.
The following lemma captures (a particular form of) what is needed to prove Theorem~\ref{thL1}.
Here $\alpha\pm\beta$ denotes a quantity in the range $[\alpha-\beta,\alpha+\beta]$.

\begin{lemma}\label{llast}
Suppose that $p(n)$ satisfies our Standard Assumption~\ref{A0}, that $0<\eta< 1/100$ is constant,
and that the sequences $(x_n)$, $(y_n)$ and $(t_n)$ satisfy \eqref{scond}.
Then
\begin{equation}\label{bigo}
 \Pr(L_1=y_n, N_1=t_n) = O(1/(\eps n)),
\end{equation}
and, for $n$ large enough,
\begin{equation}\label{etaapp}
 \Pr(L_1=x_n, N_1=t_n) = (1\pm 30\eta)\bb{\Pr(L_1=y_n, N_1=t_n) \pm \eta/(\eps n)}.
\end{equation}
\end{lemma}
As usual, the implicit constant in \eqref{bigo} may depend on all
choices so far, i.e., on the sequences $(p(n))$, $(x_n)$, $(y_n)$ and $(t_n)$ and constants $r$
and $\eta$, just of course not on $n$. (See Remark~\ref{rem:asy1}.)
The same applies to the implicit constant $n_0$ in `for $n$ large enough'.

Before proving Lemma~\ref{llast}, which will take most of the section, we show that it implies
Theorem~\ref{thL1}.
\begin{proof}[Proof of Theorem~\ref{thL1}, assuming Lemma~\ref{llast}.]
Theorem~\ref{thL1} asserts that, given $r\ge 2$, a sequence $(p(n))$ (and hence $\eps(n))$
satisfying Assumption~\ref{A0}, and sequences $(x_n)$, $(y_n)$ and $(t_n)$ satisfying \eqref{scond},
we have
\begin{equation}\label{L1diff2}
 \Pr(L_1=x_n, N_1=t_n)-\Pr(L_1=y_n, N_1=t_n) = o(1/(\eps n)).
\end{equation}
In proving this we may of course fix $r\ge 2$, $(p(n))$, $(x_n)$, $(y_n)$ and $(t_n)$ as above,
and $0<\delta\le 1$, say. Then we must show that for all large enough $n$ (depending on all choices so far),
we have
\begin{equation}\label{L1diff2a}
 \bigl| \Pr(L_1=x_n, N_1=t_n)-\Pr(L_1=y_n, N_1=t_n) \bigr| \le \delta/(\eps n).
\end{equation}

By the first part of Lemma~\ref{llast}, applied with $\eta=1/200$, say, there is a constant $C$ (which may depend
on all choices so far) such that $\Pr(L_1=y_n, N_1=t_n)\le C/(\eps n)$. We may assume $C>1$.
Let $\eta=\delta/(60C)\le \delta/4$.
By the second part of Lemma~\ref{llast},
if $n$ is large enough then
\[
 \Pr(L_1=x_n, N_1=t_n) = (1\pm 30\eta)\bb{\Pr(L_1=y_n, N_1=t_n) \pm \delta/(4\eps n)}.
\]
Since $(1+30\eta)\le 2$, this gives
\[
 \Pr(L_1=x_n, N_1=t_n) = \Pr(L_1=y_n, N_1=t_n) \pm \bb{30 \eta C/(\eps n) + \delta/(2\eps n)},
\]
which implies \eqref{L1diff2a} since $30 \eta C= \delta /2$.
\end{proof}

It remains to prove Lemma~\ref{llast}. In doing so we may of course fix
$r\ge 2$, sequences $(p(n))$, $(x_n)$, $(y_n)$, $(t_n)$, and a real number $0<\eta<1/100$
such that our Standard Assumption~\ref{A0} holds, as does \eqref{scond}.
Any new constants introduced may depend on these choices.
Note that Assumption~\ref{Ae} of Section~\ref{sec_ext} holds.

Define the largest component $\cL_1$ of $\Hrnp$ as before,
and the extended core $\excore(\Hrnp)$ and the set $\cI$ as in Section~\ref{sec_ext} (see \eqref{Idef}).
Define $\range$ as in \eqref{rangedef}. By \eqref{rangecont}, there
are constants $c_0>0$ and $c_5$ such that, for $n$ large,
\[
 \range = [(1-\delta)\rho_{r,\la}n,(1+\delta)\rho_{r,\la}n] \subset [c_0\eps n,c_5\eps n].
\]
Set
\[
 c=\min\{c_0,c_1,c_2,c_3\}\hbox{\quad and\quad} C=\max\{c_4,c_5\},
\]
where the constants $c_i$, $1\le i\le 4$, are as in Lemmas~\ref{ecbig}--\ref{isolmu}.

Let $\cA$ be the event that the following conditions hold:

\smallskip
(i) $|\excore(\cL_1)|\ge c\eps^2 n$,

(ii) $c\eps^2n \le |\cI| \le 2\alpha\eps^2 n$,

(iii) $|M^+(\cI)|\ge c\eps n$,

(iv) $|M^+(\cI)|\le C\eps n$\quad and

(v) $c\eps n \le |\excore(\cL_1)|+|M^+(\excore(\cL_1))|\le C\eps n$.

\begin{claim}\label{PrA}
For $n$ sufficiently large, for any $s\in\range$ and any $t\ge 2$ we have
\begin{equation}\label{cpA}
 \Pr( \cA \mid L_1=s, N_1=t )\ge 1-3\eta.
\end{equation}
\end{claim}
\begin{proof}
Lemmas~\ref{ecbig}, \ref{isolcore}, \ref{isolml} and~\ref{isolmu} imply that properties
(i)--(iv) hold with conditional probability at least $1-(2\eta+o(1))\ge 1-3\eta$ for $n$ large.
Whenever (i) holds then in particular $\excore(\cL_1)$ is not empty.
But then by, Proposition~\ref{excore}, $|\excore(\cL_1)|+|M^+(\excore(\cL_1))|=|\cL_1|=L_1=s\in\range$, so (v) holds.
\end{proof}

As before, let $\ucx$ be the event
\[
 \ucx = \{\ \cL_1\hbox{ is the unique complex component of }\Hrnp\ \},
\]
so $\ucx$ holds whp by Lemma~\ref{C1unique}.
Let $\eco$ be the component of $\excore(\Hrnp)$ with the highest nullity/excess,
chosen according to any fixed rule if there is a tie,
and let $\cI'$ be the set of isolated vertices of $\excore(\Hrnp)\setminus \eco$.
Note that if $\ucx$ holds, then $\excore(\Hrnp)$ has a unique complex component,
and we have $\eco=\excore(\cL_1)$ and so $\cI'=\cI$.
We shall define an event $\cB$ that is closely related to $\cA$,
but defined using $\eco$ and $\cI'$ in place of $\cC_1$ and $\cI$. The point is that
we would like to condition on the extended core (and some further information), and then use
the remaining randomness concerning which parts of the mantle are joined to the largest
component as our smoothing distribution. But until this remaining randomness has been revealed,
we do not know which component is largest, so we cannot easily condition on $\cA$.

Let $a_1=|\eco|$, $a_0=|\cI'|$, and $a=\min\{a_1,a_0\}$. Given the entire extended core,
pick sets $A_1\subset V(\eco)$ and $A_0\subset\cI'$ with $|A_1|=|A_0|=a$, for example
by choosing in each case the first $a$ eligible vertices in a fixed order. (This is
mostly a convenience; with a little more work we could work directly with $\eco$ and $\cI'$.)
Let $\cB$ be the event that the following hold:

\smallskip
(I) $c\eps^2 n\le a\le 2\alpha\eps^2n$\quad and

(II) $c\eps n/2 \le |M^+(A_1\cup A_0)| \le 2C\eps n$.

\begin{claim}\label{AUB}
If $n$ is large enough, then whenever $\cA\cap\ucx$ holds, so does $\cB$.
\end{claim}
\begin{proof}
Suppose that $\cA\cap \ucx$ holds.
Then, since $\ucx$ holds, $\eco=\excore(\cL_1)$. Since $|\excore(\cL_1)|\ge c\eps^2n$ by 
condition (i) of $\cA$, we have $a_1\ge c\eps^2n$. Also, $a_0=|\cI'|=|\cI|$ is between $c\eps^2n$
and $2\alpha\eps^2n$ by (ii). Since $a=\min\{a_1,a_0\}$, this gives (I). Consider next the upper
bound in (II). Since $\ucx$ holds, $A_1\cup A_0\subset \eco\cup\cI' = \excore(\cL_1)\cup \cI$,
so $M^+(A_1\cup A_0)\subset M^+(\excore(\cL_1))\cup M^+(\cI)$, and
(iv) and (v) imply $|M^+(A_1\cup A_0)|\le 2C\eps n$.
For the lower bound we have two cases: if $a_0\le a_1$ then $A_0=\cI'=\cI$ 
so $|M^+(A_1\cup A_0)|\ge |M^+(A_0)|=|M^+(\cI)|\ge c\eps n$ by (iii).
If $a_1\le a_0$ then $A_1=\eco=\excore(\cL_1)$, so from (v) we have
\[
 |M^+(A_1\cup A_0)|\ge |M^+(\excore(\cL_1))| \ge c\eps n-a_1
 = c\eps n-a \ge c\eps n-2\alpha\eps^2n,
\]
by (I). Since $\alpha\le 1$ and $\eps\to 0$,
if $n$ is large enough then it follows
that $|M^+(A_1\cup A_0)| \ge c\eps n/2$, so (II) holds.
\end{proof}

At this point the reader may forget the definition of $\cA$; we work with $\cB$ from now on.

\begin{claim}\label{clB}
For $n$ sufficiently large, for any $s\in\range$ and any $t\ge 2$ we have
\begin{equation}\label{cpBU}
 \Pr(\cB\cap \ucx\mid L_1=s, N_1=t)\ge 1-4\eta,
\end{equation}
\begin{equation}\label{PB1}
 \Pr(\cB) \ge 1-5\eta
\end{equation}
and
\begin{equation}\label{PB2}
 \Pr(\cB) \ge 1/2.
\end{equation}
\end{claim}
\begin{proof}
For any $s\in\range$ and $t\ge 2$, by Lemma~\ref{C1unique},
\begin{equation}\label{Ulikely}
  \Pr(\ucx\mid L_1=s, N_1=t)=1-o(1).
\end{equation}
Since $\cA\cap\ucx$ implies $\cB$, it follows from this and \eqref{cpA}
that, if $n$ is large enough, then \eqref{cpBU} holds.
In turn, we deduce that
\[
 \Pr(\cB)\ge \Pr(\cB\cap \ucx) \ge (1-4\eta)\Pr(L_1\in\range,\,N_1\ge 2).
\]
Since $L_1\in\range$ whp (from \eqref{rangewhp})
and (by Theorem~\ref{thglobal2}, say) $N_1\ge 2$ whp,
it follows that $\Pr(\cB)\ge 1-4\eta-o(1)$.
Hence \eqref{PB1} holds for $n$ large enough.
Of course \eqref{PB2} (stated only for convenient reference later) follows,
since $\eta\le 1/100$.
\end{proof}

We now have the pieces in place to complete the proof of Lemma~\ref{llast} and hence of Theorem~\ref{thL1}.

\begin{proof}[Proof of Lemma~\ref{llast}.]
We start by revealing the following
partial information about our random marked hypergraph $H=\Hrnp$. First reveal $\excore(H)$,
and in particular which vertices are marked. Define $\eco$, $A_1$ and $A_0$ as above, noting that these
depend only on $\excore(H)$. Reveal $M^+(A_1\cup A_0)$, the set of non-core vertices joined
by paths to $A_1\cup A_0$. Also (although this is not necessary), reveal all hyperedges outside
$\excore(H)\cup M^+(A_1\cup A_0)$. We write $\cF=\cF_n$ for the $\sigma$-algebra generated by the information
revealed so far. Note that the event $\cB$ defined above is $\cF$-measurable.

What have we not yet revealed? Let $F$ be the subgraph of $H$ induced by $V=A_1\cup A_0\cup M^+(A_1\cup A_0)$
with any edges inside $A_1\cup A_0$ removed (these removed edges are in $\excore(H)$).
By Proposition~\ref{excore} and the definition of $M^+(A_1\cup A_0)$, the hypergraph $F$ is an $(A_1\cup A_0)$-rooted
$r$-forest on $V$. Moreover, replacing one such forest by another does not affect $\excore(H)$,
or indeed any information revealed earlier. Thus, conditional on $\cF$, the distribution of $F$
is uniform over all $(A_1\cup A_0)$-rooted $r$-forests on $V$; this uniform choice of the forest
$F$ is the only remaining randomness.

When $\cB$ holds, $|A_1|=|A_0|=a=\Theta(\eps^2 n)$, while $m=e(F)=|M^+(A_1\cup A_0)|/(r-1)=\Theta(\eps n)$.
Since $\eps=O(1)$ we have $m=\Omega(a)$. Also, $a^2/m=\Theta(\eps^3 n)\to\infty$, so $m=o(a^2)$.
Hence the conditions of Lemma~\ref{ldist} are satisfied.
Let $Y_n=|M^+(A_1)|$ be the number of vertices in $V\setminus (A_1\cup A_0)$ joined to $A_1$ (rather than
to $A_0$).
Since $m^{3/2}a^{-1}=\Theta(\sqrt{n/\eps})$,
Lemma~\ref{ldist} tells us that when $\cB$ holds and $x_n'-y_n'=o(\sqrt{n/\eps})$, then
\begin{equation}\label{csmooth}
 \Pr(Y_n = x_n' \mid \cF)-\Pr(Y_n=y_n' \mid \cF) = o(\sqrt{\eps/n})
\end{equation}
and
\begin{equation}\label{csmooth2}
 \Pr(Y_n=y_n'\mid \cF)=O(\sqrt{\eps/n}).
\end{equation}

Let $\cC^*$ be the component of $H=\Hrnp$ containing $\eco$, and let $L^*=|\cC^*|$
and $N^*$ denote the order and nullity of $\cC^*$. Since $\cC^*$ consists of $\eco$
with a forest attached, $N^*$ is also the nullity of $\eco$ and so is an $\cF$-measurable
random variable.
Let $\cE$ denote the event that $\Hrnp\setminus\cL_1$ has a complex component,
so $\{N^*=t_n\}\subset \{N_1=t_n\}\cup\cE$.
Theorem~\ref{thN1} implies that $\Pr(N_1=t_n)=O((\eps^3n)^{-1/2})$.
By the last part of Lemma~\ref{C1unique}, we have $\Pr(\cE)=O(1/(\eps^3n))$, so
\[
  \Pr(N^*=t_n) \le \Pr(N_1=t_n) + \Pr(\cE) = O((\eps^3n)^{-1/2}).
\]
It follows from this and \eqref{PB2} that
\begin{equation}\label{N*}
 \Pr(N^*=t_n\mid\cB)  \le  2\Pr(N^*=t_n) = O((\eps^3n)^{-1/2}).
\end{equation}

Given $\cF$, the only uncertainly (i.e., not-yet-revealed information)
affecting $L^*$ is which vertices of $M^+(A_1\cup A_0)$ join to $A_1$
rather than to $A_0$. Thus
we may write $L^*$ as $X_n+Y_n$ where $X_n$ is $\cF$-measurable and $Y_n$ is defined as above. 
Hence, when $\cB$ holds,
\begin{multline}\label{Lxy}
 \Pr(L^* = x_n \mid \cF)-\Pr(L^*=y_n \mid \cF) \\ = 
 \Pr(Y_n = x_n -X_n \mid \cF)-\Pr(Y_n=y_n -X_n \mid \cF) = 
 o(\sqrt{\eps/n}),
\end{multline}
by \eqref{csmooth} with $x_n'=x_n-X_n$ and $y_n'=y_n-X_n$.
Taking the expectation\footnote{Again, this requires a uniform bound, but we have that by considering the worst-case
$\omega_n\in\cB$ in \eqref{csmooth} and \eqref{Lxy}.}
 over the $\cF$-measurable
event $\cB\cap\{N^*=t_n\}$, it follows that
\begin{multline}\label{sB}
  \Pr(L^* = x_n, N^*=t_n \mid \cB)-\Pr(L^*=y_n,N^*=t_n \mid \cB) \\ 
 =  o\bb{\sqrt{\eps/n}\ \Pr(N^*=t_n\mid\cB)}  = o(1/(\eps n)),
\end{multline}
where the last step is from \eqref{N*}.
Similarly, from \eqref{csmooth2} and \eqref{N*} we see that
\begin{equation}\label{sB2}
 \Pr(L^*=y_n, N^*=t_n\mid \cB)=O\bb{\sqrt{\eps/n}\ \Pr(N^*=t_n\mid\cB)} = O(1/(\eps n)).
\end{equation}
It remains to remove the conditioning, and to replace $L^*$ by $L_1$.

Recall that when $\ucx$ holds, then $\eco=\excore(\cL_1)$, so $\cC^*=\cL_1$, and hence $L_1=L^*$ and $N_1=N^*$.
Let $s=s(n)\in\range$, and let $t=t(n)\ge 2$.
If $(L^*,N^*)=(s,t)$ but $(L_1,N_1)\ne (s,t)$, then there is a component with
$s$ vertices and nullity $t$ which is not the unique largest component.
By Lemma~\ref{lbigep} (in particular from \eqref{ep2}), we thus have
\[
 \Pr\bb{(L^*,N^*)=(s,t),\,(L_1,N_1)\ne (s,t)} = o\bb{ \Pr((L_1,N_1)=(s,t)) }.
\]
Using \eqref{cpBU} for the first inequality, and 
recalling that
\[
 \{L_1=s, N_1=t\}\cap\cB\cap\ucx =  \{L^*=s, N^*=t\}\cap\cB\cap\ucx,
\]
we have
\begin{eqnarray*}
 (1-4\eta)\Pr(L_1=s, N_1=t) 
&\le&  \Pr(\{L_1=s, N_1=t\}\cap \cB\cap\ucx)  \\
&=&    \Pr(\{L^*=s, N^*=t\}\cap \cB\cap\ucx)  \\
&\le&  \Pr(\{L^*=s, N^*=t\}\cap \cB) \\
&\le&  \Pr(L^*=s, N^*=t) \\
&\le&  \Pr(L_1=s, N_1=t) \\
&&\hskip 0.5cm +\ \Pr(L^*=s, N^*=t, (L_1,N_1)\ne (s,t))  \\
&=&    (1+o(1))\Pr(L_1=s, N_1=t).
\end{eqnarray*}
Hence, for $n$ large,
\begin{equation}\label{L*N*LN}
 \Pr(\{L^*=s, N^*=t\}\cap \cB) = (1\pm 4\eta)\Pr(L_1=s, N_1=t).
\end{equation}
Relations \eqref{L*N*LN} and \eqref{PB1} imply that
\begin{multline}\label{transf}
 \Pr(L^*=s, N^*=t\mid \cB) = \frac{\Pr(\{L^*=s, N^*=t\}\cap \cB)}{\Pr(\cB)} \\
  = (1\pm 10\eta) \Pr(L_1=s, N_1=t),
\end{multline}
since $0<\eta<1/50$.
Applying \eqref{transf} (backwards) with $s=x_n$ and $t=t_n$,
then \eqref{sB}, then \eqref{transf} with $s=y_n$ and $t=t_n$, we deduce that
\begin{multline*}
 \Pr(L_1=x_n, N_1=t_n) \\
 = (1\pm 10\eta)^{-1}\bb{ (1\pm 10\eta)\Pr(L_1=y_n, N_1=t_n) + o(1/(\eps n)) }.
\end{multline*}
Since $0<\eta<1/30$ this implies \eqref{etaapp} for $n$ large enough.
Similarly, from \eqref{transf} and \eqref{sB2} we deduce \eqref{bigo},
completing the proof of Theorem~\ref{thL1}.
\end{proof}

Finally, let us comment briefly on the proof of Theorem~\ref{thL1only}. The arguments in this
section and the previous one can be modified to prove Theorem~\ref{thL1only}, by omitting all conditioning
on $N_1$, and replacing the quantity $1/(\eps n)$ where it appears as the order
of a point probability (for example in \eqref{sB} and \eqref{sB2}) by $\sqrt{\eps/n}$,
which is (within a constant factor) the probability that $L_1$ takes a given typical value.
At almost all points nothing needs to be added to the argument. Two exceptions
are in the proof of Lemma~\ref{ecbig}, and that
in place of \eqref{Ulikely} we need $\Pr(\ucx\mid L_1=s)=1-o(1)$.
See the footnote to the proof of Lemma~\ref{ecbig} for an argument covering both of these.

\section{Proof of Theorem~\ref{thenum}}\label{sec_deduce}

In this section we shall deduce Theorem~\ref{thenum} from Theorem~\ref{thprob}. The only additional
result needed for this is Lemma~\ref{lbigep}; however, the formulae are rather messy and we will devote some
space to calculations aimed at simplifying them.

\begin{proof}[Proof of Theorem~\ref{thenum}.]
Let $r\ge 2$ be fixed, and suppose that $t=t(s)\to\infty$ as $s\to\infty$;
our aim is to give an asymptotic formula for the number $C_r(s,t)$ of connected 
$r$-uniform hypergraphs on $[s]$ having nullity $t$. From \eqref{nulldef} the number
$m$ of edges of any such hypergraph satisfies
\[
 m=\frac{s+t-1}{r-1}.
\]
In particular, we must have $s+t$ congruent to $1$ modulo $r-1$ for $C_r(s,t)$ to be non-zero. We assume this
from now on. We also assume that $t=o(s)$ and $t\to\infty$.
More precisely, we fix a function $t=t(s)$ with these properties; we shall define a number
of other quantities in terms of $s$ and $t$. Except where otherwise specified, all limits and asymptotic notation then 
refer to $s\to\infty$.

The function $\Psi_r(x)$ defined in \eqref{Prdef}
is continuous on $(0,1)$ and tends to $0$ as $x\to 0$ and to infinity as $x\to 1$.
Also, as mentioned in the introduction, $\Psi_r(x)$ is strictly increasing on $(0,1)$; hence,
for $s$ large enough that $t\ge 2$, the equation 
\eqref{rdefc} has a unique positive solution $\rho=\rho(s)$.
Expanding about $x=0$ we see that $\Psi_r(x)=\frac{r-1}{12}x^2+O(x^3)$, uniformly in $0<x\le 1/2$, say.
Thus
\begin{equation}\label{rstsim}
 \rho \sim 2\sqrt{\frac{3}{r-1}\frac{t}{s}}
\end{equation}
as $s\to\infty$.

Define
\begin{equation}\label{rr2}
 \rho_2 = \rho_2(s) = 1-(1-\rho)^{r-1}
\end{equation}
and
\begin{equation}\label{rla}
 \la = \la(s) = \frac{-\log(1-\rho_2)}{\rho_2} = \frac{-(r-1)\log(1-\rho)}{1-(1-\rho)^{r-1}}.
\end{equation}
Note that $\la>1$; comparing \eqref{rr2} and \eqref{rla} with \eqref{rldef} and \eqref{rkldef}
we see that in the notation of the rest of the paper,
\[
 \rho_2 = \rho_\la=\rho_{2,\la} \hbox{\qquad and\qquad}\rho=\rho_{r,\la}.
\]

As $s\to\infty$, from \eqref{rstsim} we have $\rho=\rho(s)\to 0$. Thus, from \eqref{rr2},
$\rho_2 \sim (r-1)\rho$. Hence
\[
 \la = 1+\rho_2/2+O(\rho_2^2)
\]
and
\begin{equation}\label{neps}
 \eps=\la-1\sim \frac{\rho_2}{2} \sim \frac{r-1}{2}\rho \sim \sqrt{3(r-1)\frac{t}{s}} \to 0.
\end{equation}

Set
\[
 n=n(s)=\floor{s/\rho}\hbox{\qquad and\qquad}p=p(s)=\la\frac{(r-2)!}{n^{r-1}}.
\]
Since $\rho\to 0$ as $s\to\infty$, certainly $n\to\infty$ and $n\sim s/\rho$. Hence, 
from \eqref{neps},
\[
 \eps n \sim \frac{r-1}{2} s.
\]
From \eqref{neps} we also have $\eps\to 0$. In addition,
\[
 \eps^3n = \eps^2 (\eps n) = \Theta( (t/s) s)=\Theta(t) \to \infty.
\]
Hence our Standard Assumption~\ref{A0} is satisfied, i.e., we have the conditions needed to apply Theorem~\ref{thprob}.
(Of course, here we consider a sequence $(n(s),\eps(s))_{s\ge 1}$ of values rather than a sequence $(n,\eps(n))_{n\ge 1}$.
This causes no problems since we can pass to subsequences on which $n(s)$ is strictly increasing.)

We have chosen the parameters $n$ and $p$ so that the `typical' order and nullity of the largest
component of $\Hrnp$ will be very close to $s$ and $t$, respectively. More precisely, 
for the `typical' number $\rho_{r,\la}n$ of vertices we have
\[
 \rho_{r,\la} n = \rho n = \rho\floor{s/\rho} = s+O(\rho) = s+O(\eps).
\]
For the nullity, recalling \eqref{rhosdef} and \eqref{rla} we see that
the formula \eqref{Prdef} defining $\Psi_r$ may be written as
\begin{equation}\label{Prdef2}
 \Psi_r(\rho_{r,\la})=\rho_{r,\la}^*/\rho_{r,\la}.
\end{equation}
Indeed, this is how we arrived at this formula. Since $\rho_{r,\la}=\rho$ it follows using
\eqref{rdefc} that
\[
 \rho_{r,\la}^* n = \Psi_r(\rho) \rho n = \frac{t-1}s \rho n = t-1+O(\eps^3) = t+O(1).
\]
The standard deviations $\sigma_n$ and $\sigma_n^*$ appearing in Theorem~\ref{thprob} tend to
infinity, so certainly we have $s=\rho_{r,\la}n+o(\sigma_n)$ and $t=\rho_{r,\la}^* n +o(\sigma^*_n)$.
Hence, by Theorem~\ref{thprob}, and in particular the formula \eqref{pp2} (with $a=b=0$),
\begin{equation}\label{f1}
 \Pr\bb{ L_1(\Hrnp)=s,\, N_1(\Hrnp)=t } \sim \frac{\sqrt{6}}{8\pi}\frac{(r-1)^2}{\eps n}
 \sim \frac{\sqrt{6}}{4\pi}\frac{r-1}{s}.
\end{equation}
On the other hand, applying Lemma~\ref{lbigep} with $\cQ_s$ the set of all $r$-uniform hypergraphs with
$s$ vertices and nullity $t$, writing $N_{s,t}$ for the number of components of $\Hrnp$
with the property $\cQ_s$, we have
\begin{equation}\label{f2}
 \Pr\bb{ L_1(\Hrnp)=s,\, N_1(\Hrnp)=t } \sim \E[N_{s,t}].
\end{equation}
By linearity of expectation,
\begin{equation}\label{f3}
 \E[N_{s,t}] = \binom{n}{s} C_r(s,t) p^m (1-p)^{\binom{n}{r}-\binom{n-s}{r}-m}.
\end{equation}
Combining \eqref{f1}--\eqref{f3} we see that
\begin{equation}\label{f4}
 C_r(s,t) \sim \frac{\sqrt{6}}{4\pi}\frac{r-1}{s} 
 \binom{n}{s}^{-1} p^{-m} (1-p)^{-\left(\binom{n}{r}-\binom{n-s}{r}-m\right)}.
\end{equation}
In the rest of this section
we simplify this formula, in particular by showing that we can replace
$n=\floor{s/\rho}$ by $s/\rho$, for example.

Working in terms of $n$ and $\eps$ (the more familiar parameters from the bulk of the paper) we have
\[
 s=\Theta(\eps n),\quad t=\Theta(\eps^3 n),\quad m=\Theta(\eps n), \quad p=O(n^{-r+1})= O(n^{-1}).
\]
It follows immediately that $pm=o(1)$, so $(1-p)^m\sim 1$.
Also,
\[
 r!\binom{n}{r} = n(n-1)\cdots(n-r+1) = n^r-\binom{r}{2}n^{r-1}+O(n^{r-2})
\]
and
\[
 r!\binom{n-s}{r} = (n-s)^r -\binom{r}{2}(n-s)^{r-1}+O(n^{r-2}) = (n-s)^r - \binom{r}{2} n^{r-1}  + O(sn^{r-2}).
\]
Subtracting, we see that
\[
 \binom{n}{r}-\binom{n-s}{r} = \frac{n^r-(n-s)^r}{r!} + O(sn^{r-2}) 
 = \frac{n^r-(n-s)^r}{r!} + o(n^{r-1}) = o(n^r).
\]
Since $\log\bb{(1-p)^k}=-pk+O(p^2k)$ it follows easily that
\begin{multline*}
 a= -\log\left( (1-p)^{\binom{n}{r}-\binom{n-s}{r}-m} \right)
  = p\frac{n^r-(n-s)^r}{r!}+o(1) \\
  = \frac{\la n}{r(r-1)}(1-(1-s/n)^r) = \frac{\la s}{r(r-1)} f(s/n)
\end{multline*}
where $f(x)=x^{-1}(1-(1-x)^r)$. Since $f'(x)=O(1)$ for $x=O(1)$ and $s/n-\rho=O(\eps/n)$
we have $f(s/n)-f(\rho)=O(\eps/n)$,  so $sf(s/n)-sf(\rho)=O(\eps^2)=o(1)$.
Hence
\[
 a= \frac{\la s}{r(r-1)}f(\rho)+o(1).
\]
From \eqref{Prdef}, \eqref{rla} and \eqref{rdefc} it follows that
\[
 a = s\frac{\Psi_r(\rho)+1}{r-1} +o(1) =  s\frac{(t-1)/s+1}{r-1} +o(1) = \frac{s+t-1}{r-1} +o(1) = m+o(1).
\]
From \eqref{f4} we now obtain the formula
\[
 C_r(s,t) \sim \frac{\sqrt{6}}{4\pi} \frac{r-1}{s} e^m p^{-m} \binom{n}{s}^{-1}.
\]

By Stirling's formula,
\[
 \binom{n}{s}^{-1} \sim \sqrt{2\pi}\sqrt{\frac{s(n-s)}{n}} \frac{s^s(n-s)^{n-s}}{n^n}
 \sim \sqrt{2\pi s} \left(\frac{s}{n}\right)^s  \left(1-\frac{s}{n}\right)^{n-s}.
\] 
Since $s=\rho n+O(\eps)$, we have $s/n=\rho(1+O(1/n))$. Also, $1-s/n=(1-\rho)(1+O(\eps/n))$, and it follows
that
\[
 \binom{n}{s}^{-1} \sim \sqrt{2\pi s} \rho^s (1-\rho)^{s(1-\rho)/\rho}.
\]

Using again that $s/n=\rho(1+O(1/n))$, and that $m=O(\eps n)=o(n)$, we have
\[
 p^{-m} = \frac{n^{(r-1)m}}{\la^m(r-2)!^m} \sim \frac{s^{(r-1)m}}{\la^m(r-2)!^m\rho^{(r-1)m}}.
\]
Next, we shall eliminate $\la$ from this expression. From \eqref{rdefc}, \eqref{Prdef2}
and \eqref{rhosdef} we have
\[
 \frac{(r-1)m}{s}= \frac{s+t-1}{s} = 1+\Psi_r(\rho) = 1+\frac{\rho^*_{r,\la}}{\rho} =
 \frac{\la}{r\rho}\bb{1-(1-\rho)^r}.
\]
Hence
\[
 \la^m = ((r-1)m/s)^m r^m \rho^m \bb{1-(1-\rho)^r}^{-m}.
\]

Putting the pieces together we obtain the asymptotic formula
\begin{eqnarray}
 C_r(s,t) &\sim& \frac{\sqrt{6}}{4\pi} \frac{r-1}{s} e^m
  \frac{s^{(r-1)m}}{\la^m (r-2)!^m \rho^{(r-1)m}} 
\sqrt{2\pi s} \rho^s (1-\rho)^{s(1-\rho)/\rho} \nonumber \\
 &=& \frac{\sqrt{3}}{2\sqrt{\pi}} \frac{r-1}{\sqrt{s}} e^m
  \frac{s^{(r-1)m}}{\la^m (r-2)!^m \rho^{(r-1)m}} 
 \rho^s (1-\rho)^{s(1-\rho)/\rho} \nonumber \\
 &=& \frac{\sqrt{3}}{2\sqrt{\pi}} \frac{r-1}{\sqrt{s}} e^m
  \frac{(1-(1-\rho)^r)^m s^{(r-1)m}}{((r-1)m/s)^m r^m \rho^m (r-2)!^m \rho^{(r-1)m}} 
 \rho^s(1-\rho)^{s(1-\rho)/\rho} \nonumber \\
 &=& \frac{\sqrt{3}}{2\sqrt{\pi}} \frac{r-1}{\sqrt{s}} e^m
  \frac{(1-(1-\rho)^r)^m s^{rm}}{m^m r!^m \rho^{rm}}
 \rho^s (1-\rho)^{s(1-\rho)/\rho}, \label{Cr1}
\end{eqnarray}
proving the main formula \eqref{enumform} of Theorem~\ref{thenum}.

Turning to \eqref{Pform}, let
\[
 N=\binom{s}{r} = \frac{s(s-1)\cdots (s-r+1)}{r!} = \frac{s^r}{r!}e^{-\binom{r}{2}/s+O(s^{-2})}.
\]
Since $m\sim s/(r-1)$, it follows that
\[
 N^m \sim \frac{s^{rm}}{r!^m} e^{-\binom{r}{2}m/s} \sim \frac{s^{rm}}{r!^m} e^{-r/2}.
\]
Since $N=\Theta(s^r)$, for $r\ge 3$ we have $m^2=o(N)$, and it follows that
\[
 \binom{N}{m} = \frac{N(N-1)\cdots (N-m+1)}{m!} \sim \frac{N^m}{m!}.
\]
On the other hand, if $r=2$ then $m\sim s$ and $N\sim s^2/2$, so 
\[
  \binom{N}{m} = \frac{N^m}{m!} e^{-\binom{m}{2}/N+o(1)} \sim e^{-1} \frac{N^m}{m!}.
\]
We may write the last two formulae together as $\binom{N}{m}\sim e^{-\ind{r=2}} N^m/m!$, where
$\ind{A}$ denotes the indicator function of $A$.
Hence, using Stirling's formula, and recalling that $m=(s+t-1)/(r-1) \sim s/(r-1)$, 
\[
 \binom{N}{m} \sim \frac{e^{-r/2-\ind{r=2}}}{\sqrt{2\pi m}} \frac{e^ms^{rm}}{m^mr!^m} 
 \sim \frac{e^{-r/2-\ind{r=2}}}{\sqrt{2\pi s/(r-1)}}  \frac{e^ms^{rm}}{m^mr!^m} .
\]
From this and \eqref{Cr1} we obtain the expression
\begin{eqnarray*}
 P_r(s,t) &\sim&  e^{r/2+\ind{r=2}}\sqrt{2\pi s/(r-1)} \frac{\sqrt{3}}{2\sqrt{\pi}} \frac{r-1}{\sqrt{s}}
   \frac{(1-(1-\rho)^r)^m }{ \rho^{rm}}
 \rho^s(1-\rho)^{s(1-\rho)/\rho} \\
 &=& e^{r/2+\ind{r=2}} \sqrt{\frac{3(r-1)}{2}}  \left(\frac{1-(1-\rho)^r}{\rho^r}\right)^m
  \left(\rho(1-\rho)^{(1-\rho)/\rho}\right)^s,
\end{eqnarray*}
completing the proof.
\end{proof}

\noindent
{\bf Acknowledgements.}
We would like to thank the referee for a careful reading of the paper, and for suggestions that
led to significant improvements in the presentation.

\newpage
\appendix
\section{Appendix}

In this appendix we show that Theorem~\ref{thenum} is compatible with previous results and,
in Subsection~\ref{Alr}, give a proof of Lemma~\ref{l_rtree}.

As in the statement of Theorem~\ref{thenum}, we write $C_r(s,t)$ for the number
of connected $r$-uniform hypergraphs on $[s]=\{1,2,\ldots,s\}$ having nullity $t$.
Also, with $m=(s+t-1)/(r-1)$ the number of edges of such a hypergraph,
we write $P_r(s,t)$ for the probability that a random $m$-edge $r$-uniform hypergraph
on $[s]$ is connected.

\subsection{The Behrisch--Coja-Oghlan--Kang formula}

Behrisch, Coja-Oghlan and Kang~\cite{BC-OK2pre,BC-OK2abs,BC-OK2b} gave
an asymptotic formula
for the number of connected $r$-uniform hypergraphs with $s$ vertices and nullity
$t=\Theta(s)$. As noted below, their result implies asymptotic formulae
for $C_r(s,t)$ and $P_r(s,t)$ valid if $t/s\to 0$ sufficiently slowly
as $s\to\infty$. Here we show that Theorem~\ref{thenum} is consistent with the 
(single) formula given in the preprint~\cite{BC-OK2pre}, extended abstract~\cite{BC-OK2abs},
and corrected version of~\cite{BC-OK2b}.\footnote{In a previous draft of this appendix
we showed that Theorem~\ref{thenum} is not consistent with a different formula
given in the original
published version of~\cite{BC-OK2b}; Behrisch, Coja-Oghlan and Kang have
since published a corrigendum.}

Behrisch, Coja-Oghlan and Kang~\cite{BC-OK2pre,BC-OK2abs,BC-OK2b}
write $\zeta$ for the average degree of the hypergraphs
under consideration; in our notation this is $rm/s=(r/(r-1))(s+t-1)/s$.
They write $d$ rather than $r$ for the number of vertices
in each hyperedge, and define a quantity~$r$ implicitly by the equation
\begin{equation}\label{theireq}
 r= \exp\left(-\zeta \frac{(1-r)(1-r^{d-1})}{1-r^d}\right).
\end{equation}
Transforming to our notation by writing $r$ instead of $d$, and substituting
$1-\rho$ for the variable $r$ being solved for, this equation becomes
\[
 1-\rho= \exp\left(- \frac{r}{r-1}\frac{s+t-1}{s} \frac{\rho(1-(1-\rho)^{r-1})}{1-(1-\rho)^r}\right).
\]
Taking logs, this is easily seen to be equivalent to \eqref{rdefc}, so the quantity $r$
appearing in their results is exactly $1-\rho$ where $\rho$ is defined as in Theorem~\ref{thenum}.

Behrisch, Coja-Oghlan and Kang~\cite{BC-OK2pre,BC-OK2abs,BC-OK2b}
give an asymptotic formula for $P_r(s,t)$ of the following form, valid whenever $t=\Theta(s)$.
Here we have partially translated to our notation, writing $r$ for the size of a hyperedge
and replacing their $r$ by $1-\rho$:
\begin{equation}\label{theirgen}
 P_r(s,t) \sim f_r(\rho,\zeta) \exp(g_r(\rho,\zeta)) \Phi_r(\rho,\zeta)^s,
\end{equation}
where $f_r$, $g_r$ and $\Phi_r$ are algebraic functions of $\rho$ and $\zeta$.
Translating from their notation 
\[
 \Phi_d(r,\zeta) = r^{\frac{r}{1-r}}(1-r)^{1-\zeta}(1-r^d)^{\zeta/d}
\]
to our notation, we obtain
\[
 \Phi_r(\rho,\gamma) = (1-\rho)^{\frac{1-\rho}{\rho}} \rho^{1-r\gamma} (1-(1-\rho)^r)^\gamma,
\]
where $\gamma=\zeta/r=m/s$. Hence, the factor $\Phi_r(\rho,\zeta)^s$ in \eqref{theirgen}
is exactly the factor
\[
 \left(\frac{1-(1-\rho)^r}{\rho^r}\right)^m  \bigl( \rho(1-\rho)^{(1-\rho)/\rho}\bigr)^s
\]
in \eqref{Pform}, and Theorem~\ref{thenum} states that if $t=o(s)$ then
\[
 P_r(s,t) \sim c_r \Phi_r(\rho,\zeta)^s
\]
where
\[
 c_r= e^{r/2} \sqrt{\frac{3(r-1)}{2}}
\]
for $r\ge 3$ and $c_2=e^2\sqrt{3/2}$.

For any constant $a$, the asymptotic formula~\eqref{theirgen} is valid
for $t=t(s)$ in the range $[s/a,a s]$. It follows that it must also be valid for $t=t(s)$
such that $t/s$, or equivalently $(t-1)/s$, tends to zero at some rate,
though we cannot say what. Hence, the combination of our
result and \eqref{theirgen} imply that
\[
 f_r(\rho,\zeta) \exp(g_r(\rho,\zeta)) \to c_r
\]
in the appropriate limit. Since
\begin{equation}\label{zeta}
 \zeta= \frac{r m}{s} = \frac{r}{r-1}\frac{s+t-1}{s} = \frac{r}{r-1}\left(1+\frac{t-1}{s}\right)
\end{equation}
depends only on the ratio $\alpha=(t-1)/s$ and not on $s$,
and $\rho$ is a function of $\zeta$ and hence of $\alpha$, we see that the limit
above must hold as $\alpha\to 0$; the quantity $s$ does not appear in this statement.

Since $\alpha\to 0$ and $\rho\to 0$ are equivalent, it is convenient to work instead in
terms of $\rho$. Defining $\zeta(\rho)$ by \eqref{rdefc} and \eqref{zeta} or,
equivalently, by \eqref{theireq} with $r=1-\rho$, we must have
\[
 f_r(\rho,\zeta(\rho)) \exp({g_r(\rho,\zeta(\rho))}) \to c_r
\]
as $\rho\to 0$. 

In checking this, let us mix notation in such a way that all symbols
are unambiguous. Thus we write $d$ for the number of vertices in a hyperedge,
and avoid $r$, replacing it by $1-\rho$.
Rearranging \eqref{theireq} for $\zeta$ as a function of $\rho=1-r$, we find
that
\begin{equation}\label{zr}
 \zeta = -\frac{\log(1-\rho)}{\rho} \frac{1-(1-\rho)^d}{1-(1-\rho)^{d-1}} =
 \frac{d}{d-1}\left(1+\frac{d-1}{12}\rho^2 +O(\rho^3)\right).
\end{equation}

For $d\ge 3$, substituting this and $r=1-\rho$ into the formulae
\[
 g_d(r,\zeta) = \frac{\zeta(d-1)(r-2r^d+r^{d-1})}{2(1-r^d)}
\]
and
\[
 f_d(r,\zeta) = a_d(r,\zeta)/\sqrt{b_d(r,\zeta)}
\]
where
\[
 a_d(r,\zeta) = 1-r^d-(1-r)(d-1)\zeta r^{d-1}
\]
and
\[
 b_d(r,\zeta) = \bb{1-r^d+\zeta(d-1)(r-r^{d-1}) }(1-r^d) - d\zeta r(1-r^{d-1})^2
\]
given in~\cite[Theorem 5]{BC-OK2pre}, \cite[Theorem 3]{BC-OK2abs}
and the corrected version of~\cite[Theorem 1.1]{BC-OK2b},
we see that
\[
  a_d(r,\zeta) \sim \frac{d(d-1)}{2}\rho^2,
 \hbox{\qquad}
  b_d(r,\zeta) \sim \frac{d^2(d-1)}{6}\rho^4
 \hbox{\qquad and\qquad} g_d(r,\zeta)\to d/2,
\]
which combine to give
\begin{equation}\label{Gdlim}
  f_d(\rho,\zeta(\rho)) \exp(g_d(\rho,\zeta(\rho))) \to \sqrt{\frac{3(d-1)}{2}} e^{d/2} =  c_d.
\end{equation}
A similar but simpler calculation for the graph case $d=2$ gives
\begin{multline}\label{G2lim}
 f_2 \exp(g_2) = \frac{1+r-\zeta r}{\sqrt{(1+r)^2-2\zeta r}}
\exp\left( \frac{2\zeta r+\zeta^2r}{2(1+r)} \right) \\
 \sim \frac{\rho}{\sqrt{\tfrac{2}{3}\rho^2}} e^2 \to e^2\sqrt{3/2} = c_2.
\end{multline}
In other words, our results are consistent with those of Behrisch, Coja-Oghlan and Kang.
Of course, since the ranges of applicability are different, our results neither imply, nor are implied by, theirs.

\medskip
Although in this section we concentrate on comparing enumerative formulae, we should like to point out that,
like our Theorem~\ref{thenum}, the enumerative results of Behrisch, Coja-Oghlan and Kang
are deduced from a probabilistic result, the local limit theorem in~\cite{BC-OK2a}.
Bearing in mind the relationship $N_1=(r-1)M_1-L_1+1$ between the number $M_1$
of edges, number $L_1$ of vertices, and nullity $N_1$ of the largest component
of the random hypergraph $\Hrnp$,
\cite[Theorem 1.1]{BC-OK2a} translates to a local limit result for $(L_1,N_1)$
with variance $\sigma_{\cN}^2$ for $L_1$, variance
\[
 (r-1)^2\sigma_{\cM}^2 + \sigma_{\cN}^2 -2(r-1)\sigma_{\cN\cM}\
\]
for $N_1$, and covariance $(r-1)\sigma_{\cM\cN}-\sigma_{\cN}^2$.
Noting that $\rho$ in \cite{BC-OK2a} is what we call $1-\rho$, we have checked using Maple
that the formulae given in~\cite{BC-OK2a} give the right asymptotics (matching Theorem~\ref{thprob}) when
the branching factor tends to~$1$.

\subsection{The Bender--Canfield--McKay formula}

For graphs, Bender, Canfield and McKay~\cite{BCMcK} give the following asymptotic formula
for the probability $P_2(s,t)$ that a random graph on $[s]$ with $m=s+t-1$ edges
is connected:
\begin{equation}\label{bck}
 P_2(s,t) \sim e^{a(x)}\left(\frac{2e^{-x}y^{1-x}}{\sqrt{1-y^2}}\right)^s,
\end{equation}
where $x=m/s$, $y=y(x)$ is defined implicitly by
\begin{equation}\label{BCKdef}
 2xy=\log\left(\frac{1+y}{1-y}\right),
\end{equation}
and
\begin{equation}\label{adef}
 a(x) =x(x+1)(1-y)+\log(1-x+xy)-\tfrac{1}{2}\log(1-x+xy^2).
\end{equation}
Here we have changed the notation to match ours, and have simplified the more precise error term
given in~\cite{BCMcK}. The formula \eqref{bck} is valid whenever $t\to\infty$ and $m\le \binom{s}{2}-s$.
In particular, it is certainly valid in the range $t=o(s)$ that we consider.

Recall that we define $\rho$ by \eqref{rdefc}, i.e., by 
\[
 \Psi_r(\rho) = \frac{t-1}{s} =\frac{m}{s} -1 =x-1,
\]
where, substituting $r=2$ into \eqref{Prdef},
\[
 \Psi_2(\rho)=-\frac{1}{2}\frac{\log(1-\rho)}{\rho}\frac{2\rho-\rho^2}{\rho}-1 = 
 -\frac{\log(1-\rho)}{2} \frac{2-\rho}{\rho} -1.
\]
Hence, $\rho=\rho(x)$ satisfies
\begin{equation}\label{xdef}
 2x = -\log(1-\rho)\frac{2-\rho}{\rho}.
\end{equation}

Let
\begin{equation}\label{ydef}
 y= \frac{\rho}{2-\rho}.
\end{equation}
Then $\rho=2y/(y+1)$, and it is easy to check that \eqref{BCKdef} is satisfied, so this $y=y(x)$ coincides
with that defined in \cite{BCMcK}.
Substituting \eqref{xdef} and \eqref{ydef} into \eqref{adef} gives an explicit formula for $a(x)$ in terms
of $\rho$; expanding around $\rho=0$ (using Maple), it turns out that
\[
 a(x) \to 2+\log(3/2)/2
\]
as $\rho\to 0$, so in our setting \eqref{bck} simplifies to
\begin{equation}\label{BCK2}
 P_2(s,t) \sim e^2\frac{\sqrt{3}}{\sqrt{2}} 
   \left(\frac{2e^{-x}y^{1-x}}{\sqrt{1-y^2}}\right)^s
   = e^2\frac{\sqrt{3}}{\sqrt{2}} 
 y^{-xs} \left(\frac{2e^{-x}y}{\sqrt{1-y^2}}\right)^s.
\end{equation}
Now from \eqref{xdef}
\[
 e^{-x} = (1-\rho)^{\frac{2-\rho}{2\rho}} = (1-\rho)^{\frac{1}{\rho}-\frac{1}{2}}.
\]
Also, since $1-y^2=(4-4\rho+\rho^2-\rho^2)/(2-\rho)^2=4(1-\rho)/(2-\rho)^2$, we have
\[
 \frac{2y}{\sqrt{1-y^2}} = \frac{2\rho}{2-\rho} \frac{2-\rho}{2\sqrt{1-\rho}} = \frac{\rho}{\sqrt{1-\rho}}.
\]
Thus
\[ 
 \frac{2e^{-x}y}{\sqrt{1-y^2}} = \rho (1-\rho)^{\frac{1}{\rho}-1} = \rho (1-\rho)^{\frac{1-\rho}{\rho}}.
\]
Since $xs=m$ and $1/y=(2-\rho)/\rho$, the formula \eqref{BCK2} may be written as
\[
 P_2(s,t) \sim e^2\frac{\sqrt{3}}{\sqrt{2}}  \left(\frac{2-\rho}{\rho}\right)^m 
 \left( \rho (1-\rho)^{\frac{1-\rho}{\rho}} \right)^s,
\]
which is exactly what \eqref{Pform} states when $r=2$. Hence the graph case of Theorem~\ref{thenum}
is consistent with (and indeed implied by) the results of Bender, Canfield and McKay~\cite{BCMcK}.

\subsection{The Sato--Wormald formula}

Sato and Wormald~\cite{SatoWormald} give an asymptotic formula for $C(N,M)$, the number
of connected $3$-uniform hypergraphs with $N$ vertices and $M$ edges, valid when $M=N/2+R$
with $R=o(N)$ and $R/(N^{1/3}\log^2 N)\to\infty$. Translating to our notation, $N=s$
and 
\[
 \frac{s+t-1}{2} = m = M = N/2+R = s/2 +R,
\]
so $R=(t-1)/2$. They define a quantity $\la^{**}$, which we shall write as $\mu$,
to be the unique positive solution to 
\[
 \mu \frac{e^{2\mu}+e^\mu+1}{(e^\mu-1)(e^\mu+1)}=3M/N = 3m/s.
\]
Rewriting this equation as
\[
 \mu \frac{1+e^{-\mu}+e^{-2\mu}}{(1-e^{-\mu})(1+e^{-\mu})} = 3m/s,
\]
it is easy to see that the solution is $\mu=-\log(1-\rho)$, where we define $\rho$ by the $r=3$ case of \eqref{rdefc},
i.e., by
\[
  \Psi_3(\rho) = -\frac{2}{3} \frac{\log(1-\rho)}{\rho} \frac{1-(1-\rho)^3}{1-(1-\rho)^2} -1 = \frac{t-1}{s} = \frac{2m}{s}-1.
\]

Sato and Wormald then define
\[
 \check{n}^* = \frac{e^{2\mu}-1-2\mu}{(e^\mu-1)(e^\mu+1)} = \frac{1-(1+2\mu)e^{-2\mu}}{1-e^{-2\mu}},
\]
so in our notation
\[
 \check{n}^* = 1+\frac{2\log(1-\rho)(1-\rho)^2}{\rho(2-\rho)}.
\]
From this point we use Maple to rewrite the Sato--Wormald formula in terms of $\rho$ and $s$.
We may rewrite their main formula for $C(N,M)=C_3(s,t)$ as
\begin{equation}\label{SWform}
 C(N,M) \sim \sqrt{\frac{3}{\pi N}} \exp\left(N\tilde{\phi}(\check{n}^*)\right) \exp\left((2R/N+1)N\log N\right),
\end{equation}
where
\begin{multline*}
 \tilde{\phi}(x) = -\frac{1-x}{2}\log(1-x)+\frac{1-x}{2} -(\log 2+2)\frac{R}{N} -\frac{\log 2}{2}x \\
 + \frac{R}{N}\log\left(\frac{e^\mu+1}{\mu(e^\mu-1)}\right)
 + \frac{1}{2}x\log\left(\frac{(e^\mu-1)(e^\mu+1)}{\mu}\right) -1.
\end{multline*}
[Here we have added $1-2R/N\log(N)$ to their $\phi$ to define $\tilde\phi$, and adjusted \eqref{SWform}
accordingly.]
Now, in our notation, the quantity $R/N$ appearing in \cite{SatoWormald} is
\begin{equation}\label{RN}
 \frac{R}{N}=\frac{m-s/2}{s} = \frac{(s+t-1)/2-s/2}{s} = \frac{t-1}{2s} = \frac{\Psi_3(\rho)}{2}.
\end{equation}
Since $2R/N+1=2m/s$, in our notation we may rewrite \eqref{SWform} as
\[
 C_3(s,t) \sim \sqrt{\frac{3}{\pi s}}  \exp\left(s\tilde{\phi}(\check{n}^*)\right) s^{2m}. 
\]
In the case $r=3$ we may write \eqref{enumform} as
\[
 C_3(s,t) \sim \sqrt{\frac{3}{\pi s}} \psi^s s^{2m},
\]
where
\[
 \psi=\psi(t/s)=  \left( \frac{ e (1-(1-\rho)^3) }{6 (m/s) \rho^3} \right)^{m/s}
  \rho(1-\rho)^{(1-\rho)/\rho} .
\]

Since $m/s=(1+\Psi_3(\rho))/2$, we can write $\psi$ explicitly as a function of $\rho$ only.
Using \eqref{RN} and the formula $\mu=-\log(1-\rho)$, we can also write $\tilde\phi(\check{n}^*)$
as a function of $\rho$ only. Since each formula only involves $\rho$, it follows that
our formula and that of Sato and Wormald are consistent if and only if 
$\tilde\phi(\check{n}^*)$ and $\log \psi$ reduce to the same function of $\rho$. 
At this point we enlist the help of Maple,
which assures us that they do.
We hope that the reader will take this on trust (or check it themselves), especially given
that Sato and Wormald~\cite{SatoWormald} themselves check consistency of their result
with the $r=3$ case of the result in~\cite{BC-OK2pre}, and, as we have shown, ours
is also consistent with this.

Note that the check above shows that the formula given in~\cite{SatoWormald} is not only
asymptotically equal to ours in the range in which it applies (as it must be if our
results and theirs are correct): the expressions are equal, although this is far
from obvious. Our Theorem~\ref{thenum} says that the formula in~\cite{SatoWormald}
applies much more widely than shown in~\cite{SatoWormald}.

\subsection{The Karo\'nski--\L uczak formula}

Karo\'nski and \L uczak~\cite{KL_sparse} gave an asymptotic
formula for $C_r(s,t)$ valid when $r\ge 2$ is constant, $t\to\infty$, and
$t=o(\log s/\log\log s)$. (They also give formulae for $t$ constant.)
Mixing their notation and ours, writing $r$ for the number of vertices
in a hyperedge, $s$ for the number of vertices of the hypergraphs
being counted, and $k=t-1$ for their excess (nullity minus 1), their formula becomes
\[
 \sqrt{\frac{3}{4\pi}} \left(\frac{e}{12 k}\right)^{k/2}\frac{(r-1)^{k/2+1}}{(r-2)!^{k/(r-1)}}
\left(\frac{e^{2-r}}{(r-2)!}\right)^{s/(r-1)} s^{s+3k/2-1/2}.
\]
Noting that $m=(s+t-1)/(r-1)=(s+k)/(r-1)$, we may rewrite this as
\begin{equation}\label{KLform}
 \sqrt{\frac{3}{4\pi}} \frac{r-1}{\sqrt{s}} \left( \frac{(r-1)e}{12}\right)^{k/2} e^{(2-r)s/(r-1)} (r-2)!^{-m} s^{s+3k/2}k^{-k/2}.
\end{equation}
Aiming to separate out the factors that grow superexponentially in $s$ and/or in $k$, 
letting
\begin{equation}\label{afdef}
 f(\rho) = \frac{1-(1-\rho)^r}{r\rho} = 1 - \frac{r-1}{2}\rho + O(\rho^2),
\end{equation}
we may write \eqref{enumform} as
\begin{eqnarray*}
 C_r(s,t) &\sim& \frac{\sqrt{3}}{2\sqrt{\pi}} \frac{r-1}{\sqrt{s}} 
   \left(\frac{e f(\rho) s^{r-1}} {(m/s) (r-1)! \rho^{r-1}}\right)^m  \bigl( \rho(1-\rho)^{(1-\rho)/\rho}\bigr)^s \\
&=& \sqrt{\frac{3}{4\pi}} \frac{r-1}{\sqrt{s}} 
   \left(\frac{e f(\rho)}{ (m/s)(r-1)(r-2)!}\right)^m \bigl( (1-\rho)^{(1-\rho)/\rho}\bigr)^s \rho^{s-(r-1)m}s^{(r-1)m} \\
&=& \sqrt{\frac{3}{4\pi}} \frac{r-1}{\sqrt{s}} 
   \left(\frac{e f(\rho)}{ (r-1)m/s }\right)^m \bigl( (1-\rho)^{(1-\rho)/\rho}\bigr)^s (r-2)!^{-m} s^{s+k}\rho^{-k}.
\end{eqnarray*}

Recall from \eqref{rdefc} that $\Psi_r(\rho)=(t-1)/s=k/s$, where from simple calculus,
\[
 \Psi_r(\rho) = \frac{r-1}{12}\rho^2+\frac{r-1}{12}\rho^3+O(\rho^4).
\]
It follows that we may write
\[
 \rho = \tau \sqrt{\frac{12}{r-1}\frac{k}{s}}
\]
where $\tau=1+O(\rho)$ as $\rho\to 0$. (Of course, we can expand $\tau$ further in powers of $\rho$ if we wish.)
Then
\[
 s^{s+k}\rho^{-k} = s^{s+3k/2}k^{-k/2} \left(\frac{r-1}{12}\right)^{k/2} \tau^{-k},
\]
so
\begin{multline*}
 C_r(s,t) \sim \sqrt{\frac{3}{4\pi}} \frac{r-1}{\sqrt{s}} 
   \left(\frac{r-1}{12}\right)^{k/2} \left(\frac{e f(\rho)}{(r-1)m/s}\right)^m \\
 \bigl( (1-\rho)^{(1-\rho)/\rho}\bigr)^s (r-2)!^{-m}
 s^{s+3k/2}k^{-k/2}\tau^{-k}.
\end{multline*}
Comparing this with \eqref{KLform}, we see that our asymptotic formula and that of Karo\'nski and \L uczak agree whenever
\[
 \exp\left(k/2+\frac{(2-r)s}{r-1}\right)
  \sim \left( e f(\rho) \frac{s}{(r-1)m}\right)^m \bigl( (1-\rho)^{(1-\rho)/\rho}\bigr)^s \tau^{-k}.
\]
Noting that $(r-1)m=s+k$, raising both sides to the power $r-1$ this is equivalent to
\begin{multline}\label{cs}
 \exp\left( (2-r)s +\frac{r-1}{2}k\right) \\
 \sim \left( e f(\rho) \frac{s}{s+k}\right)^{s+k} 
\bigl( (1-\rho)^{(1-\rho)/\rho}\bigr)^{(r-1)s} \tau^{-(r-1)k}.
\end{multline}
Now we follow our earlier strategy of obtaining explicit formulae in terms of $\rho$ and then expanding.
Using $k/s=\Phi_r(\rho)$,
\[
 \tau = \rho\sqrt{\frac{r-1}{12}\frac{s}{k}}
\]
and \eqref{afdef}, with Maple we find that
after taking logarithms and dividing by $s$, the two sides of \eqref{cs} differ by $\Theta(\rho^4)$ as
$\rho\to 0$. Noting that $s\rho^4\to 0$ if and only if $s(\sqrt{k/s})^4\to 0$, i.e., if and only if $k=o(\sqrt{s})$,
this implies that our formula and that of Karo\'nski and \L uczak agree if $k=o(\sqrt{s})$,
i.e., if $t=o(\sqrt{s})$, but not in general. Thus our results are consistent
with theirs.
Furthermore, our result shows that their formula, which they prove only for $k=o(\log s/\log\log s)$,
remains valid for any $k=o(\sqrt{s})$. Note that Karo\'nski and \L uczak~\cite{KL_sparse}
state that they expect their formula to remain true for $k=o(s^{1/3})$, but to be hard to prove.
Note also that Andriamampianina and Ravelomanana~\cite{AR} give such an extension
to $k=o(s^{1/3})$ in an extended abstract.

\subsection{Proof of Lemma~\ref{l_rtree}}\label{Alr}

Although Selivanov~\cite{Selivanov} gives a proof of Lemma~\ref{l_rtree},
we include a proof here, since the
reference is a little obscure and the result is straightforward,

\begin{proof}[Proof of Lemma~\ref{l_rtree}.]
In the trivial case $k=0$ we have $n=a$ so \eqref{Fan} evaluates to $1$, as required; from
now on suppose $k\ge 1$.

Any $[a]$-rooted $r$-forest $H$ on $[n]$ may be constructed by starting from the
hypergraph with vertex set $[a]$ and no edges, and adding edges one-by-one so
that each edge consists of one old vertex (a vertex already present) and a
\emph{group} of $r-1$ new vertices. Although there are in general many possible
orders in which the edges may be added to form a given $H$, the groups will
always be the same -- in each edge the old vertex is the unique vertex at minimal
graph distance from $\{1,2,\ldots,a\}$ in $H$. Let $\pi(H)$ denote the partition
of $[n]\setminus[a]$ formed by the groups. From now on we fix one of the $\partit{k}{r-1}$
possible partitions $\pi$ that may arise in this way, and consider the set
$\cH_\pi$ of $[a]$-rooted $r$-forests on $[n]$ with $\pi(H)=\pi$. Our aim is to
show that $|\cH_\pi|=a n^{k-1}$.

Fix an arbitrary order $\prec$ on the $(r-1)$-element subsets of $[n]\setminus [a]$.
By a \emph{leaf part} of $H$ we mean a part of $\pi(H)$ all of whose
vertices have degree $1$ in $H$.
Let $c(H)$ be the sequence defined as follows: pick the leaf part of $H$
earliest in the order $\prec$, write down the old vertex $v$ appearing
in the corresponding edge $e$, delete $e$, and continue until no edges
remain. The last edge deleted clearly has its old vertex in $[a]$,
so $c(H)$ consists of $k-1$ elements of $[n]$ followed by an element of $[a]$.
It is simple to check that this Pr\"ufer-type code gives a bijection between $\cH_\pi$
and $[n]^{k-1}\times [a]$, and the result follows.
\end{proof}

An alternative way of proving Lemma~\ref{l_rtree} is to map each $H\in \cH_\pi$ to a
$2$-forest on $a+k$ vertices ($[a]$ and the parts of $\pi$). This map
is many-to-one, but the multiplicity depends only on the number
of edges incident with $[a]$, and (surprisingly) one can apply R\'enyi's formula
for $r=2$ to deduce Lemma~\ref{l_rtree}.

\end{document}